\numberwithin{equation}{section} 
\newcounter{cont}[section] 
\newtheorem{thm}[cont]{Theorem}
\newtheorem{prop}[cont]{Proposition}
\newtheorem{lem}[cont]{Lemma}
\newtheorem{cor}[cont]{Corollary}
\theoremstyle{definition}
 \theoremstyle{remark}
 \newtheorem{rem}[cont]{Remark}
\newcommand{\N}{\mathbb{N}}
\newcommand{\R}{\mathbb{R}}
\newcommand{\e}{\varepsilon}
\begin{document}
\baselineskip=16pt

\title[Reaction-diffusion models with Perona--Malik diffusion]{Layered patterns in reaction-diffusion models with Perona--Malik diffusions}

\author[A. De Luca]{Alessandra De Luca}
\address[Alessandra De Luca]{Dipartimento di Scienze Molecolari e Nanosistemi, Universit\`a Ca' Foscari Venezia Mestre,
Campus Scientifico, Via Torino 155, 30170 Venezia Mestre (Italy)}
\email{alessandra.deluca@unive.it}

\author[R. Folino]{Raffaele Folino}
\address[Raffaele Folino]{Departamento de Matem\'aticas y Mec\'anica, Instituto de Investigaciones en Matem\'aticas Aplicadas y en Sistemas, Universidad Nacional Aut\'onoma de M\'exico, Circuito Escolar s/n, C.P. 04510 Cd. de M\'exico (M\'exico)}
\email{folino@mym.iimas.unam.mx}
 
\author[M. Strani]{Marta Strani}
\address[Marta Strani]{Dipartimento di Scienze Molecolari e Nanosistemi, Universit\`a Ca' Foscari Venezia Mestre,
Campus Scientifico, Via Torino 155, 30170 Venezia Mestre (Italy)}
\email{marta.strani@unive.it}

\keywords{Perona--Malik diffusion; compactons; energy estimates; asymptotic behavior}

\maketitle


\begin{abstract} 
In this paper we deal with a reaction-diffusion equation in a bounded interval of the real line with a nonlinear diffusion of 
Perona--Malik's type and a balanced bistable reaction term. 
Under very general assumptions, we study the persistence of layered solutions, showing that it strongly depends on the behavior of the reaction term close to the stable equilibria $\pm1$, described by a parameter $\theta>1$.
If $\theta\in(1,2)$, we prove existence of steady states oscillating (and touching) $\pm1$, called \emph{compactons}, 
while in the case $\theta=2$ we prove  the presence of \emph{metastable solutions}, namely solutions with a transition layer structure which is maintained for an exponentially long time. 
Finally, for $\theta>2$, solutions with an unstable transition layer structure persist only for an algebraically long time.  
\end{abstract}

\section{Introduction}\label{sec:intro}
The goal of this paper is to investigate the persistence of phase transition layer solutions to the reaction-diffusion equation
\begin{equation}\label{eq:Q-model}
	u_t=Q(\e^2u_x)_x-F'(u),
\end{equation}
where $u=u(x,t) : [a,b]\times(0,+\infty)\rightarrow \R$, complemented with homogeneous Neumann boundary conditions
\begin{equation}\label{eq:Neu}
	u_x(a,t)=u_x(b,t)=0, \qquad \qquad t>0,
\end{equation}
and initial datum
\begin{equation}\label{eq:initial}
	u(x,0)=u_0(x), \qquad \qquad x\in[a,b].
\end{equation}
In \eqref{eq:Q-model} $\e>0$ is a small parameter,  $Q:\R\to\R$ is a Perona--Malik's type diffusion \cite{PerMal},
while $F:\R\to\R$ is a double well potential with wells of equal depth.
More precisely, we assume that $Q\in C^1(\R)$ satisfies
\begin{equation}\label{eq:Q-ass1}
	\lim_{s\to\pm\infty}Q(s)=Q(0)= 0, \qquad \qquad Q(-s)=-Q(s),
\end{equation}
for all $s\in\R$ and that there exists $\kappa>0$ such that
\begin{equation}\label{eq:Q-ass2}
	Q'(s)>0, \quad \forall \, s \in (-\kappa,\kappa) \qquad \mbox{ and } \qquad Q'(s)<0, \quad \mbox{ if } |s|>\kappa.
\end{equation}
The prototype examples we have in mind are
\begin{equation}\label{ex:fluxfunction}
	Q(s):=\frac{s}{1+s^2} \qquad \mbox{ and } \qquad Q(s):=s e^{-s^2},
\end{equation}
which satisfy assumptions \eqref{eq:Q-ass2} with $\kappa = 1$ and $\kappa= \frac{1}{\sqrt{2}}$, respectively (see the left hand picture of Figure \ref{fig1}).
\begin{figure}[t]
	\begin{center}
		\includegraphics[width=7cm,height=5.7cm]{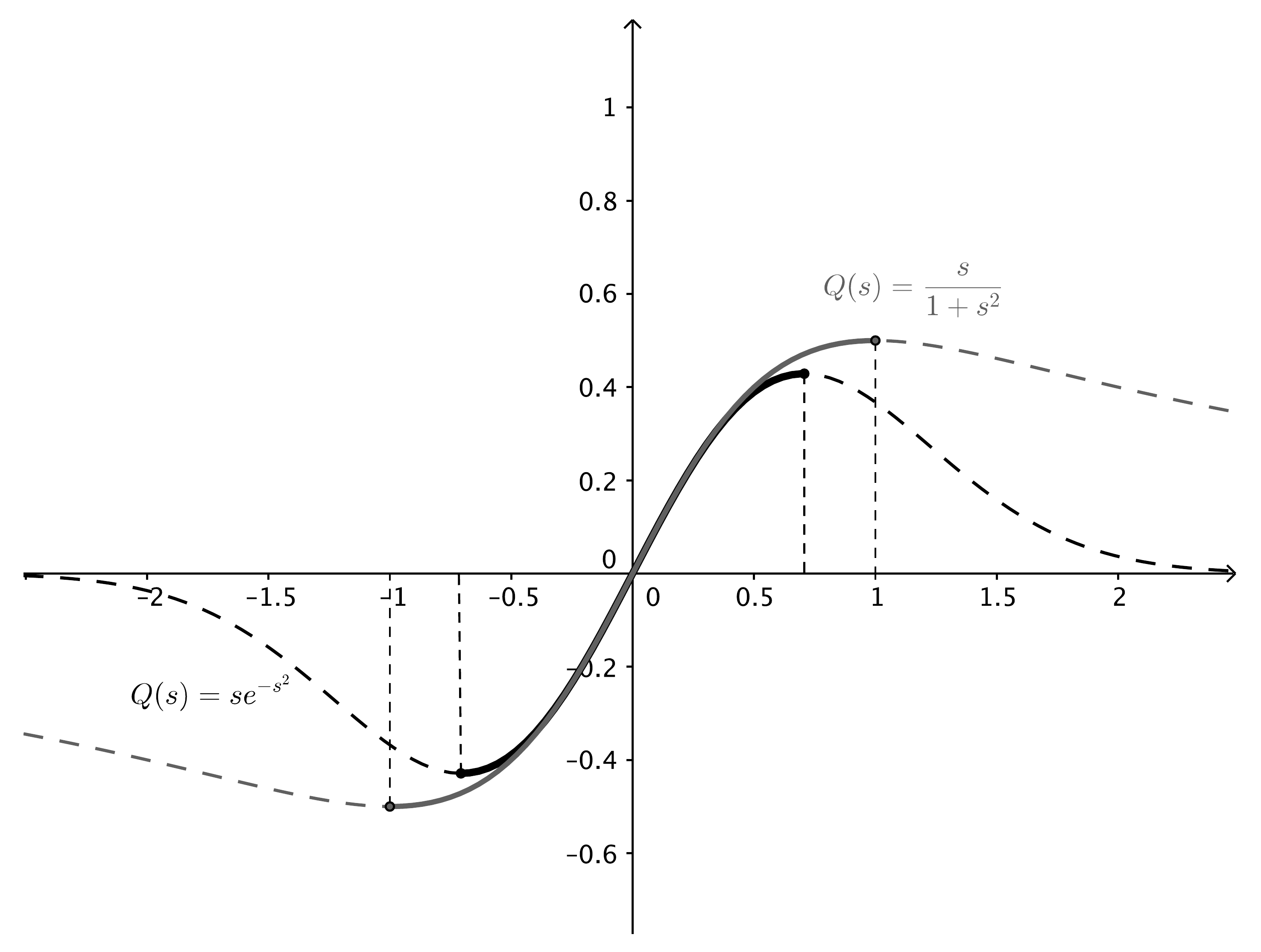}
		\,
		\includegraphics[width=7cm,height=5.7cm]{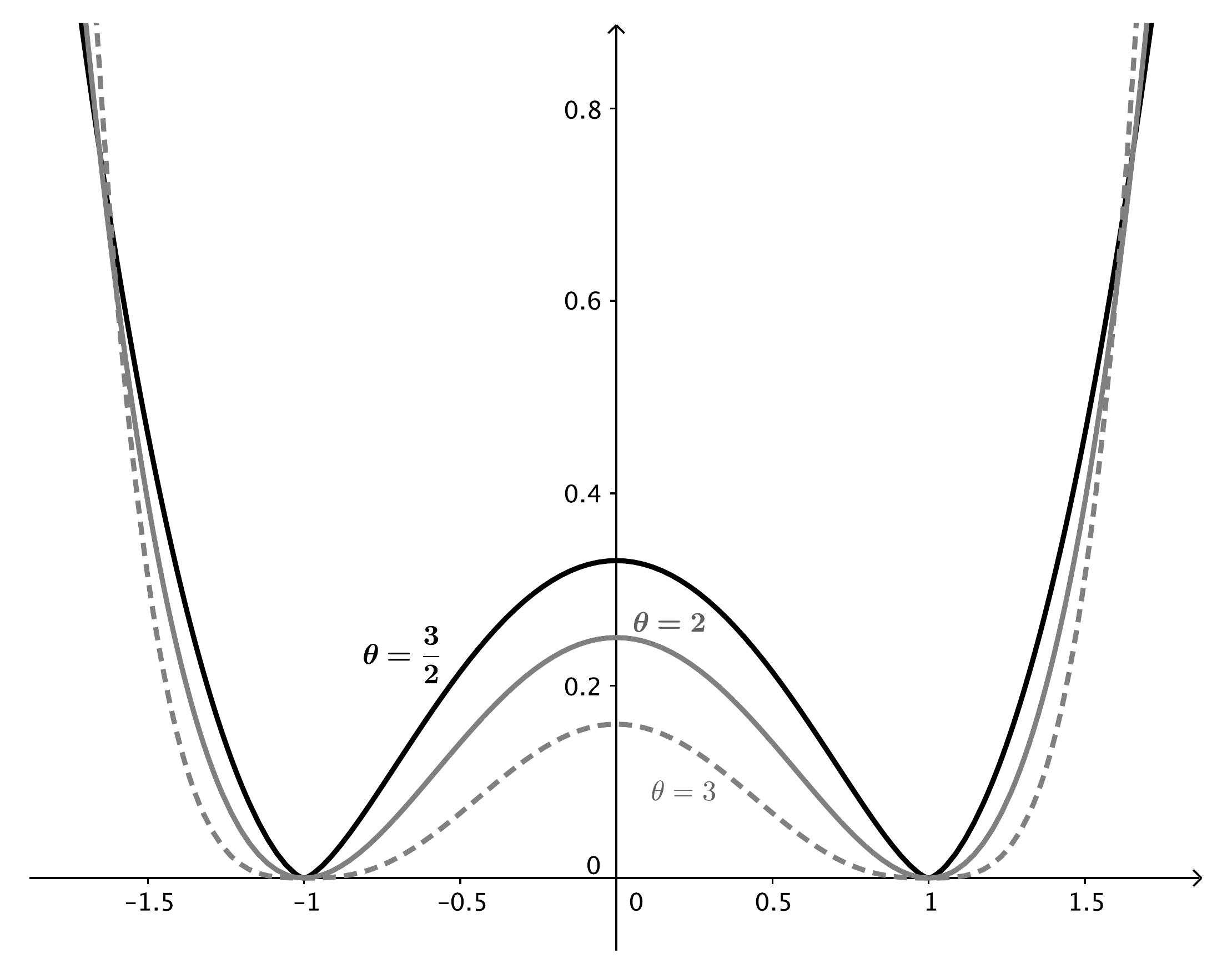}
		\hspace{3mm}
		\caption{\small{In the left hand picture we depict the two prototype examples for the diffusion $Q$; 
		in the right hand side there are several plots of the potential function \eqref{F:ex} for different choices of the parameter $\theta$}.}
		\label{fig1}
	\end{center}
\end{figure}
Regarding the reaction term, we require that the potential $F\in C^1(\R)$ satisfies
\begin{equation}\label{ipoF1}
	F(\pm1)=F'(\pm1)=0, \qquad \quad F(u)>0 \quad \forall\,u\neq\pm 1,
\end{equation}
and that there exist constants $0<\lambda_1\leq\lambda_2$, $\eta>0$ and $\theta>1$ such that:
\begin{equation}\label{ipoF2}
	\lambda_1 |1\pm u|^{\theta-2} \leq \frac{F'(u)}{u\pm1}\leq\lambda_2 |1\pm u|^{\theta-2}, \qquad\qquad \mbox{ for } \qquad |u\pm1|<\eta.
\end{equation}
Therefore, \eqref{ipoF1} ensures that $F$ is a double well potential with wells of equal depth in $u=\pm1$ and \eqref{ipoF2}
describes the behavior of $F$ close to the minimal points.
In particular, notice that by integrating \eqref{ipoF2} and using \eqref{ipoF1}, we obtain
\begin{equation}\label{eq:ass-F3}
		\frac{\lambda_1}{\theta}|1\pm u|^{\theta}\leq F(u)\leq \frac{\lambda_2}{\theta}|1\pm u|^{\theta}, 
		\quad \qquad |u\pm1|<\eta.
\end{equation}
The simplest example of potential $F$ satisfying \eqref{ipoF1}-\eqref{ipoF2} is
\begin{equation}\label{F:ex}
	F(u)=\frac{1}{2\theta}|1-u^2|^\theta, \quad \theta>1,
\end{equation}
which is depicted in Figure \ref{fig1} for different choices of $\theta>1$.
It is worth mentioning that when $\theta=2$ in \eqref{F:ex}, we obtain the classical double well potential $F(u)=\frac{1}{4}(1-u^2)^2$, 
which, in particular, satisfies $F''(\pm1)>0$; 
on the other hand, if $\theta\in(1,2)$ the second derivative of the potential \eqref{F:ex} blows up as $u\to\pm1$, 
while for $\theta>2$ we have the \emph{degenerate} case $F''(\pm1)=0$. 
Finally, notice that \eqref{ipoF1} implies that the reaction term $f=-F'$ satisfies $\displaystyle\int_{-1}^1 f(s)\,ds=0$, 
being the reason why we call $f$ a balanced bistable reaction term.

The competition between a balanced reaction term satisfying the additional assumption $F''(\pm1)>0$ and a classical linear diffusion is described by the celebrated Allen--Cahn equation \cite{Allen-Cahn}, which can be obtained from \eqref{eq:Q-model} by choosing $Q(s)=s$ and reads as
\begin{equation}\label{eq:A-C}
	u_t=\e^2u_{xx}-F'(u).
\end{equation}
Such model has been extensively studied since the early works \cite{Bron-Kohn,Carr-Pego,Fusco-Hale}, and
it is well known that the solution of \eqref{eq:A-C} subject to \eqref{eq:Neu}-\eqref{eq:initial} exhibits a peculiar phenomenon 
when the diffusion coefficient $\e>0$ is very small, known in literature as \emph{metastability}:
if the initial datum has a transition layer structure, that is $u_0$ is close to a step function taking values in $\{\pm1\}$ and has sharp transition layers, 
then the corresponding solution evolves very slowly in time and the layers move towards one another or towards the endpoints of the interval $(a,b)$ at an extremely low speed.
Once two layers are close enough or one of them is sufficiently close to $a$ or $b$, they disappear quickly and, after that, again the solution enters in a slow motion regime.
The latter phenomenon repeats until all the transitions disappear and the solution reaches a stable configuration, which is given by one of the two stable equilibria $u=\pm1$.
A rigorous description of such metastable dynamics first appeared in the seminal work \cite{Carr-Pego} where, in particular, it is proved that the time needed for the annihilation 
of the closest layers is of order $\exp(Al/\e)$, where $A:=\min\left\{F''(\pm1)\right\}>0$ and $l$ is the distance between the layers.
Therefore, the dynamics strongly depends on the parameter $\e>0$ and the evolution of the solution is extremely slow when $\e\to0^+$.
Moreover, the assumption $F''(\pm1)>0$ is necessary to have metastability and almost 25 years later than the publication of \cite{Bron-Kohn,Carr-Pego,Fusco-Hale},
in \cite{Bet-Sme} the authors prove that in the degenerate case $F''(\pm1)=0$ the exponentially small speed of the layers is replaced by an algebraic upper bound.
On the other hand, the slow motion phenomena described above appear only for potentials $F\in C^2(\R)$, while in the case of a potential of the form \eqref{F:ex} with $\theta\in(1,2)$
there exist stationary solutions to \eqref{eq:A-C}-\eqref{eq:Neu} that attain the values $\pm1$, with an arbitrary number of layers randomly located inside the interval $(a,b)$,
for details see \cite{Dra-Rob} or \cite{FPS-DCDS}.
In addition, in \cite{FPS-DCDS} a more general equation than \eqref{eq:A-C} is considered: 
the linear diffusion $u_{xx}$ is replaced by the (nonlinear) $p$-Laplace operator $(|u_x|^{p-2}u_x)_x$ and it is shown that the aforementioned phenomena strongly rely on the 
interplay between the parameters $p,\theta>1$.
To be more precise, there exist stationary solutions with a transition layer structure for any $\theta\in(1,p)$;
the metastable dynamics appears in the case $\theta=p$ and, finally, the solutions exhibit an algebraic slow motion for any $\theta>p>1$.    

The main novelty of our work consists in considering a general function $Q$ satisfying \eqref{eq:Q-ass1}-\eqref{eq:Q-ass2}, 
with the purpose of extending the aforementioned results to the reaction-diffusion model \eqref{eq:Q-model}. 
The choice of a function $Q$ satisfying \eqref{eq:Q-ass1}-\eqref{eq:Q-ass2} is inspired by \cite{PerMal}, 
where the authors introduced the so-called \emph{Perona--Malik equation} (PME)
\begin{equation}\label{eq:PME}
	u_t=Q(u_x)_x,
\end{equation}
with $Q$ as in \eqref{ex:fluxfunction}, to describe noise reduction and edge detection of digitalized images.
To be more precise, in \cite{PerMal} the authors consider the multi-dimensional version of \eqref{eq:PME} in the cylinder 
$\left\{(x,t)\in\R^3 \, : \, x\in\Omega,\, t\geq0\right\}$, where $\Omega\subset\R^2$ is a bounded open set, with initial datum $u(x,0)=u_0(x)$ and homogeneous Neumann boundary conditions on $\partial\Omega$.
The initial datum $u_0$ represents the brightness (or the grey level) of a picture which one wants to denoise and it is numerically shown that the evolution according to \eqref{eq:PME} 
smooths the zones where $|\nabla u_0|<1$ and enhance the zones with $|\nabla u_0|>1$, providing a sharper image than the initial one.
Ever since it was proposed by Perona and Malik in 1990, the nonlinear forward-backward heat equation \eqref{eq:PME} has attracted the interest of the mathematical community.
The main reason is that, on the one hand, numerical experiments exhibit good stability properties and produce the desired effect of fading out flat noise,
but on the other hand from the analytical point of view the forward-backward character of \eqref{eq:PME} induces a general skepticism, partially supported by some negative results about the \emph{ill-posedness} nature of \eqref{eq:PME}. 
This is usually referred in literature as ``the Perona--Malik paradox'' \cite{Kich}.
Without claiming to be complete, we list some fundamental analytical results about \eqref{eq:PME}.
In \cite{Gobbi}, the author prove that if $u:\R^2\to\R$ is a $C^1$ solution of \eqref{eq:PME}, then there exist $a,b\in\R$ such that $u(x,t)=ax+b$, for any $(x,t)\in\R^2$.
It is important to mention that such a result is a consequence of the forward-backward character of the equation, since it is well known that it is false, in general, for forward parabolic equation: for instance, the function $u(x,t)=e^t\sin(x)$ is an entire solution of the classical heat equation.
The initial boundary value problem (IBVP) associated to \eqref{eq:PME} with homogeneous Neumann boundary conditions \eqref{eq:Neu} has been investigated in \cite{Kaw-Kut}, where the authors prove the following results:
\begin{itemize}
	\item If the initial datum satisfies $|u_0'(x)|>\kappa$, for some $x\in[a,b]$, then (IBVP) does not admit any $C^1$ solution defined for every $t\geq0$.
	\item $C^1$ solutions of (IBVP) are unique. This is false if one considers \emph{weaker} solutions as it is shown in \cite{Hollig}.
	\item If the initial datum satisfies $|u_0'(x)|<\kappa$, for any $x\in[a,b]$, then there exists a unique global classical solution, namely a unique solution $u\in C^2((a,b)\times(0,T))$, 
		for any $T>0$.
\end{itemize}
Finally, we recall that in \cite{Guido} it is shown that PME admits a natural regularization 
by forward-backward diffusions possessing better analytical properties than PME itself. 

Many papers have also been devoted to the study of \eqref{eq:PME} with the presence of reaction and/or convection terms:
for instance, we mention \cite{FS-NA} where a Burger's type equation with Perona--Malik diffusion is considered
and \cite{CorMalSov}, where the authors study existence of wavefront solutions for a reaction-convection equation with Perona--Malik diffusion.
Regarding the reaction-diffusion model \eqref{eq:Q-model}, we recall that the corresponding multi-dimensional model is studied in \cite{Morfu}, 
where it is shown that combining the properties of an anisotropic diffusion like the Perona--Malik's
with those of bistable reaction terms provides a better processing tool which enables noise filtering, contrast enhancement and edge preserving.

Up to our knowledge, the long time behavior of phase transition layer solutions to reaction-diffusion models with a nonlinear diffusion of the form $Q(u_x)_x$
has been studied only in the case of the $p$-Laplacian \cite{FPS-DCDS} and mean curvature operators \cite{FPS,FS}.
Both cases are rather different than \eqref{eq:Q-ass1}-\eqref{eq:Q-ass2}:
to highlight the differences, let us expand the term $Q(u_x)_x$ as $Q'(u_x)u_{xx}$ and notice that the derivative of $Q$ plays the role of the diffusion coefficient.
In the case of the $p$-Laplacian, $Q'$ is singular at $s=0$ for $p\in(1,2)$ and degenerate ($Q'(0)=0$) for $p>2$,
while in the cases studied in \cite{FPS,FS} the function $Q$ is explicitly given by
$$Q(s)=\frac{s}{\sqrt{1+s^2}}, \qquad \mbox{ or } \qquad Q(s)=\frac{s}{\sqrt{1-s^2}}.$$
Hence, in the first case the derivative $Q'$ is strictly positive for any $s\in\R$ and since $Q'(s)\to0$ as $|s|\to\infty$, one has a degenerate diffusion coefficient
for large values of $|u_x|$, while in the latter case $Q'(s)\to\pm\infty$ as $s\to\pm1$ and, as a consequence, the diffusion coefficient is strictly positive but singular at $\pm1$.
In the case considered in this article, \eqref{eq:Q-ass1}-\eqref{eq:Q-ass2} imply that not only the diffusion coefficient is degenerate at $\pm\infty$ and $\pm\kappa$, 
but the diffusion coefficient is even strictly negative when $|u_x|>\kappa$. 
Thus, our work provides the first investigation of long time behavior of phase transition layer solutions in the case of a degenerate and negative diffusion.
Moreover, we mention that in \cite{FPS,FPS-DCDS,FS} an explicit formula for $Q$ is considered, 
while here $Q$ is not explicit but it is a generic function satisfying \eqref{eq:Q-ass1}-\eqref{eq:Q-ass2}.
Actually, the interested reader can check that even if considering  specific examples as in \eqref{ex:fluxfunction}, one cannot obtain explicit formulas for the  involved functions,  so that
the computations become much more complicated; for details, see Remark \ref{rem:difficile}. 

The main contribution of this paper is to show that the well know results about the classic model \eqref{eq:A-C} described above can be extended to \eqref{eq:Q-model},
even if $Q$ is a non-monotone function;
roughly speaking, the condition $Q'>0$ in $(-\kappa,\kappa)$ for some generic $\kappa$ is enough to obtain existence of \emph{compactons},
which are stationary solutions (hence, invariant under the dynamics of \eqref{eq:Q-model}), with a transition layer structure, 
in the case $F$ satisfy \eqref{ipoF1}-\eqref{ipoF2} with $\theta\in(1,2)$, 
existence of metastable patterns if $\theta=2$ and existence of algebraically slowly moving structures when $\theta>2$. 
The dynamical (in)stability of the compactons remains an interesting open problem;
it is not clear whether small perturbations of compactons generate a slow motion dynamics similar to the case $\theta=2$ 
or the transition layer structure is maintained for all times $t>0$.

\subsection*{Plan of the paper} We close the Introduction with a short plan of the paper; Section \ref{stationary} is devoted to the stationary problem associated to \eqref{eq:Q-model}. We will consider steady states both in the whole real line and in bounded intervals, showing also that there are substantial differences depending of the value of the power $\theta$ appearing in \eqref{ipoF2}. 
Indeed, the existence of the aforementioned compactons is a peculiarity of the case $\theta\in(1,2)$ and it is established in Proposition \ref{prop:comp}.
On the contrary, in the case $\theta\geq2$ we focus our attention on the existence of periodic solutions in the real line and their restriction on a bounded interval $[a,b]$ 
(see Propositions \eqref{periodicR} and \ref{periodic:bounded} respectively) that oscillate among values $\pm\bar s$, with $\bar s\approx1$ (strictly less that one). In Section \ref{sec:energy} we prove some variational results and lower bounds on the energy associated to \eqref{eq:Q-model} (for its definition, we refer to \eqref{eq:energy}) which will be crucial in order to prove the results of Section \ref{sec:slow}; we here focus on the asymptotic behavior of the solutions to \eqref{eq:Q-model}-\eqref{eq:Neu}-\eqref{eq:initial}, showing that if the dynamics starts from an initial datum with $N$ transition layers inside the interval $[a,b]$, then such configuration will be maintained for extremely long times; 
as it was previously mentioned, the time taken for the solution to annihilate the \emph{unstable} structure of the initial datum strongly depends on the choice of the parameter $\theta>1$ appearing in \eqref{ipoF2} and we have either metastable dynamics (exponentially slow motion) in the critical case $\theta=2$ or algebraic slow motion in the degenerate case $\theta>2$. We underline again that in the case $\theta \in (1,2)$  solutions with  $N$ transition layers are either stationary solutions (compactons) or close to them.
Numerical simulations, which illustrate the analytical results, are  provided at the end of Section \ref{sec:slow}.

\section{Stationary solutions}\label{stationary}
The aim of this section is to analyze the stationary problem associated to \eqref{eq:Q-model} and to prove the existence of some \emph{special} solutions to
\begin{equation}\label{eq:staz}
	Q(\e^2\varphi')'-F'(\varphi)=0,
\end{equation}
with $\e>0$, $Q$ satisfying assumptions \eqref{eq:Q-ass1}-\eqref{eq:Q-ass2} and $F$ as in \eqref{ipoF1}-\eqref{ipoF2}, 
both in the whole real line and in a bounded interval complemented with homogeneous Neumann boundary conditions \eqref{eq:Neu}. 
In order to prove the results of this section we actually have to require more regularity on the diffusion flux $Q$, that is $Q \in C^3(\R)$; however, we underline that the basic examples \eqref{ex:fluxfunction} we have in mind satisfy such additional assumption as well.

\subsection{Standing waves} We start by considering problem \eqref{eq:staz} in $\R$, and we focus the attention on standing waves, that can be defined as follows:
an increasing standing wave $\Phi_\e:=\Phi_\e(x)$ is a solution to \eqref{eq:staz} in the whole real line satisfying either
\begin{equation}\label{eq:stand-teta<2}
	  \begin{cases}
	  	\Phi_\e(x)=1, \qquad & x\geq x_+,\\
	  	\Phi_\e(x)=-1, & x\leq x_-, 
	  \end{cases} \qquad \qquad 
	  \Phi'_\e(x)> 0, \qquad \mbox{ for any } x\in(x_-,x_+),
\end{equation}
for some $x_\pm \in \R$ with $x_-<x_+$ or
\begin{equation}\label{eq:stand-teta>2}
	\lim_{x\to\pm\infty}\Phi_\e(x)=\pm1, \qquad \qquad  \Phi'_\e(x)> 0, \qquad \mbox{ for any } x\in\R.
\end{equation}
Similarly, a decreasing standing wave $\Psi_\e:=\Psi_\e(x)$ satisfies \eqref{eq:staz} and either
\begin{equation*}
	\begin{cases}	
		\Psi_\e(x)=-1, \qquad & x\geq x_+,\\
		\Psi_\e(x)=1, & x\leq x_-, 
	\end{cases} \qquad \qquad 
	\Psi'_\e(x)<0, \qquad \mbox{ for any } x\in(x_-,x_+),
\end{equation*}
for some $x_\pm \in \R$ with  $x_-<x_+$ or
\begin{equation*}
	\lim_{x\to\pm\infty}\Psi_\e(x)=\mp1, \qquad \qquad  \Psi'_\e(x)<0, \qquad \mbox{ for any } x\in\R.
\end{equation*}
It is easy to check that solutions to \eqref{eq:staz}-\eqref{eq:stand-teta<2} and \eqref{eq:staz}-\eqref{eq:stand-teta>2} are invariant by translation; thus, in order to deduce a unique solution we add the further assumption $\Phi_\e(0)=0$ and we  rewrite the problems \eqref{eq:staz}-\eqref{eq:stand-teta<2} and \eqref{eq:staz}-\eqref{eq:stand-teta>2} as 
\begin{equation}\label{eq:Fi}
	\e^2Q'(\e^2\Phi'_\e)\Phi''_\e-F'(\Phi_\e)=0, \qquad \lim_{x\to\pm\infty}\Phi_\e(x)=\pm1, \qquad \Phi_\e(0)=0, \qquad  \Phi'_\e(x)\geq 0,
\end{equation}
for any $x\in\R$. 
Analogously, in the decreasing case we have
\begin{equation}\label{eq:Fi-dec}
	\e^2Q'(\e^2\Psi'_\e)\Psi''_\e-F'(\Psi_\e)=0, \qquad \lim_{x\to\pm\infty}\Psi_\e(x)=\mp1, \qquad \Psi_\e(0)=0, \qquad  \Psi'_\e(x)\leq 0,
\end{equation}
for any $x\in\R$.

As we will see below, there is a fundamental difference whether $F$ satisfies \eqref{ipoF2} with $\theta\in(1,2)$ or with $\theta>2$: in the first case, the standing waves touch the values $\pm1$, namely the increasing standing waves satisfy \eqref{eq:stand-teta<2}.
Conversely, if $\theta>2$ the standing waves reach the values $\pm1$ only in the limit: for instance, in the increasing case, they satisfy \eqref{eq:stand-teta>2}. In order to prove such claim, as well as the existence of a unique solution to \eqref{eq:Fi} (or, alternatively, of \eqref{eq:Fi-dec}), we need to premise the following technical result.

\begin{lem}\label{lemma:new}
Let $Q \in C^1(\R)$ satisfying \eqref{eq:Q-ass1}-\eqref{eq:Q-ass2}. 
Denote by
\begin{equation}\label{def:ell}
	\ell := \kappa Q(\kappa)-\tilde Q(\kappa), \qquad \mbox{ where } \qquad \tilde Q(s) := \int_0^s Q(t) \, dt,
\end{equation}
and
\begin{equation}\label{def:P}
\begin{aligned}
	P_\e(s):= \int_{0}^{s}\e^2z\,Q'(\e^2 z)\,dz.
	\end{aligned}
\end{equation}
Then, there exists a unique (strictly positive) function $J_\e$ which inverts the equation $P_\e(s)=\xi$ in $[0,\kappa \e^{-2}]$:
for any $s\in[0,\kappa \e^{-2}]$ and $\xi\in[0,\ell\e^{-2}]$ there holds
\begin{equation}\label{eq:J_e}
	P_\e(s)=\xi \qquad \qquad \mbox{ if and only if } \qquad \qquad s=J_\e(\xi).
\end{equation}
Moreover, the following expansion holds true
\begin{equation}\label{eq:J-important}
	J_\e(\xi) = \sqrt{\frac{2}{\e^2Q'(0)}\xi}+\e^{-2}\rho(\e^2\xi),
	\qquad \mbox{ where } \qquad \rho(\xi)=o(\xi).
\end{equation}
\end{lem}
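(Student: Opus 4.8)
The plan is to collapse all the $\e$--dependence into a single scalar profile, to invert that profile on the relevant interval, and finally to Taylor--expand its inverse near the origin. First I would remove $\e$ from inside the integrand of $P_\e$: the substitution $u=\e^2 z$ followed by an integration by parts gives
\begin{equation*}
	P_\e(s)=\int_0^{s}\e^2 z\,Q'(\e^2 z)\,dz=\frac1{\e^2}\int_0^{\e^2 s}u\,Q'(u)\,du=\frac1{\e^2}\,R(\e^2 s),\qquad\text{where}\quad R(w):=wQ(w)-\tilde Q(w).
\end{equation*}
A direct computation then yields $R(0)=0$, $R(\kappa)=\kappa Q(\kappa)-\tilde Q(\kappa)=\ell$ with $\ell$ as in \eqref{def:ell}, and $R'(w)=wQ'(w)$.

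Next, for \eqref{eq:J_e}: by \eqref{eq:Q-ass2} we have $R'(w)=wQ'(w)>0$ for every $w\in(0,\kappa)$, hence $R$ is continuous and strictly increasing on $[0,\kappa]$, i.e.\ a homeomorphism of $[0,\kappa]$ onto $[0,\ell]$ (so in particular $\ell>0$), smooth with $R'>0$ on the open interval. Therefore $P_\e(s)=\e^{-2}R(\e^2 s)$ is a strictly increasing bijection of $[0,\kappa\e^{-2}]$ onto $[0,\ell\e^{-2}]$, and I would simply define $J_\e:=P_\e^{-1}$, that is $J_\e(\xi):=\e^{-2}R^{-1}(\e^2\xi)$; then the equivalence \eqref{eq:J_e} and the uniqueness of $J_\e$ are precisely the statement that $P_\e$ is a bijection of those intervals, while $J_\e(\xi)>0$ for $\xi\in(0,\ell\e^{-2}]$ because $R^{-1}$ maps $(0,\ell]$ into $(0,\kappa]$.

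For the expansion \eqref{eq:J-important} I would use the standing assumption $Q\in C^3(\R)$ of this section together with the oddness in \eqref{eq:Q-ass1}, which forces $Q''(0)=0$; combined with $Q(0)=0$ and $Q'(0)>0$ (from \eqref{eq:Q-ass2}) this gives $Q(w)=Q'(0)w+O(w^3)$ as $w\to0$, hence after integrating $R(w)=\tfrac{Q'(0)}2 w^2+O(w^4)=\tfrac{Q'(0)}2 w^2\bigl(1+O(w^2)\bigr)$. To invert this relation I would bootstrap: writing $v=R(w)$ with $w=R^{-1}(v)\to0^+$, the leading balance gives $R^{-1}(v)\sim\sqrt{2v/Q'(0)}$, whence $R^{-1}(v)=O(\sqrt v)$ and the remainder is $O(w^4)=O(v^2)$; plugging this back yields $w^2=\tfrac{2v}{Q'(0)}\bigl(1+O(v)\bigr)$, and therefore $R^{-1}(v)=\sqrt{2v/Q'(0)}+O(v^{3/2})$. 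Setting $\rho(\eta):=R^{-1}(\eta)-\sqrt{2\eta/Q'(0)}$, so that $\rho(\eta)=O(\eta^{3/2})=o(\eta)$, and using $\e^{-2}\sqrt{2\e^2\xi/Q'(0)}=\sqrt{2\xi/(\e^2 Q'(0))}$, the identity $J_\e(\xi)=\e^{-2}R^{-1}(\e^2\xi)$ becomes exactly \eqref{eq:J-important}.

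The only genuinely delicate step is this last inversion: one must first secure the crude asymptotics $R^{-1}(v)\sim\sqrt{2v/Q'(0)}$ and only afterwards substitute it into the $O(w^4)$ remainder, in order to upgrade the error from the naive $o(\sqrt v)$ to $O(v^{3/2})=o(v)$ --- and it is exactly at this point that regularity beyond $C^1$ enters (with merely $Q\in C^1$ one gets only the weaker $\rho(\eta)=o(\sqrt\eta)$). A harmless subtlety is that $R'(\kappa)=0$, so $R^{-1}$ has a vertical tangent at the endpoint $\ell$; this does not affect \eqref{eq:J-important}, which is an expansion near $\xi=0$ only.
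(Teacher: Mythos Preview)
Your proof is correct and follows essentially the same route as the paper: reduce $P_\e$ to the $\e$-free profile $R(w)=wQ(w)-\tilde Q(w)$ (the paper calls it $H$), observe $P_\e(s)=\e^{-2}R(\e^2 s)$, invert $R$ on $[0,\kappa]$, and expand $R^{-1}$ near $0$. The only difference is in that last expansion step: the paper computes $\bigl[(R^{-1})^2\bigr]'(0)=2/Q'(0)$ directly via a L'H\^opital-type limit and then asserts the $O(s^2)$ remainder, whereas you first Taylor-expand $Q$ (exploiting oddness to kill $Q''(0)$) to obtain $R(w)=\tfrac{Q'(0)}{2}w^2+O(w^4)$ and then bootstrap the inversion. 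Your version is a bit more explicit about where the extra regularity $Q\in C^3$ enters, and it cleanly yields the sharper $\rho(\eta)=O(\eta^{3/2})$; otherwise the two arguments coincide.
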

\begin{proof}
In order to study the invertibility of the equation $P_\e(s)=\xi$, we observe that
\begin{equation}\label{defalternativa}
\begin{aligned}
	P_\e(s)& =\int_{0}^{s}\e^2z\,Q'(\e^2 z)\,dz =\frac{1}{\e^2} \int_0^{\e^2  s } \tau  Q'(\tau) \, d\tau  \\
	&= \frac{1}{\e^2} \left[ \tau \,  Q(\tau) \Big|^{\e^2 s}_0- \int_0^{\e^2 s} Q(\tau) \, dt\right] = s Q(\e^2 s) - \frac{1}{\e^2}\tilde Q (\e^2 s).
	\end{aligned}
\end{equation}
Hence $P_\e$ is an even function satisfying 
$P_\e'(0)=P_\e'(\pm\kappa\e^{-2})=0$ and
\begin{equation*}
	P_\e'(s)s= \e^2 Q'(\e^2 s) \, s^2 \geq 0, \qquad \mbox{ for any } \; s \in[-\kappa \e^{-2},\kappa \e^{-2}],
\end{equation*}
because of the definition \eqref{def:P} and the assumptions on $Q$ \eqref{eq:Q-ass1}-\eqref{eq:Q-ass2}.
Thus, the equation $P_\e(s)=\xi$ has exactly two solutions in $[-\kappa \e^{-2},\kappa \e^{-2}]$, 
provided that $\xi\in[P_\e(0),P_\e(\kappa \e^{-2})]$.
Going further, one has $P_\e(0)=0$ while
\begin{equation*}
	P_\e(\kappa \e^{-2})= \kappa \e^{-2} Q(\kappa)-\e^{-2} \tilde Q(\kappa) = \frac{\ell}{\e^2},
\end{equation*}
where the constant $\ell$, defined in \eqref{def:ell}, is strictly positive because $\kappa \, Q(\kappa)$ is indeed greater than $\tilde Q(\kappa)$, 
which represents the area underneath the function $Q$ in the interval $[0,\kappa]$. 
Hence, \eqref{eq:J_e} holds true and it remains to prove \eqref{eq:J-important}.
Using \eqref{defalternativa}, we deduce 
\begin{equation}\label{eq:defH}
	P_\e(s)=\e^{-2}H(\e^2s), \qquad \mbox{ where} \qquad H(s):=sQ(s)-\tilde Q(s).
\end{equation}
Since the function $H$ in \eqref{eq:defH} satisfies $H(0)=H'(0)=0$ and $H''(0)=Q'(0)>0$, for $s \sim 0$ we have 
$$P_\e(s) =\e^{-2}\left[\frac{Q'(0)}2 \,\e^4s^2 +o((\e^2s)^2)\right],$$ 
and the latter equality gives a hint that $J_\e$ behaves like the square root of $s$ for $s \sim 0$.
To prove it, let us study the behavior of  $(H^{-1})^2$ close to the origin. 
We have
\begin{equation*}
	\left[(H^{-1})^2 \right]'(s)= 2 H^{-1}(s) \left[H^{-1}\right]' (s)= \frac{2H^{-1}(s)}{H'\left(H^{-1}(s)\right)},
\end{equation*}
so that
\begin{align*}
	\left[(H^{-1})^2 \right]'(0) &= 2\lim_{s \to 0}  \frac{H^{-1}(s)}{H'\left(H^{-1}(s)\right)} = 2 \lim_{s \to 0}  \frac{H^{-1}(s)}{H''(0)H^{-1}(s) + o\left(H^{-1}(s)^2\right)} \\
	&= 2\lim_{s \to 0} \frac{1}{H''(0)+ \frac{o\left(H^{-1}(s)^2\right)}{H^{-1}(s)}} = \frac{2}{H''(0)} =\frac{2}{Q'(0)}>0.
\end{align*}
Thus,
\begin{equation*}
	(H^{-1})^2(s) = 0 +\frac{2}{Q'(0)} s + R(s) \quad \Longrightarrow \quad H^{-1}(s) = \sqrt{\frac{2}{Q'(0)} s + R(s)},
\end{equation*}
where  $R(s)=\mathcal{O}(s^2)$. 
Hence, we can state that
\begin{equation*}
	H^{-1}(s) = \sqrt{\frac{2}{Q'(0)} s} + \rho(s), \qquad \mbox{ with } \qquad \rho(s)= o(s). 
\end{equation*}
Indeed,
\begin{align*}
	\lim_{s \to 0^+}\frac{\rho(s)}{s}&=\lim_{s \to 0^+}s^{-1}\left[\sqrt{\frac{2}{Q''(0)}s+R(s)}-\sqrt{\frac{2}{Q''(0)}s} \, \right]=
	\lim_{s \to 0^+}\frac{s^{-1}R(s)}{\sqrt{\frac{2}{Q''(0)}s+R(s)}+\sqrt{\frac{2}{Q''(0)}s}} \\
	&=\lim_{s \to 0^+}\frac{s^{-\frac{3}{2}}R(s)}{\sqrt{\frac{2}{Q''(0)}+s^{-1}{R(s)}}+\sqrt{\frac{2}{Q''(0)}}} =0.
\end{align*}
By using \eqref{eq:J_e}-\eqref{eq:defH}, one obtains
\begin{equation*}
	J_\e(\xi) =\e^{-2}H^{-1}(\e^2\xi)=\sqrt{\frac{2}{\e^2Q'(0)}\xi} +\e^{-2}\rho(\e^2\xi), 
\end{equation*}
that is \eqref{eq:J-important} and the proof is complete.
\end{proof}
\begin{rem}\label{rem:difficile}
It is interesting to notice that, even if we consider the explicit examples in \eqref{ex:fluxfunction}, 
it is not possible to give an explicit formula for the function $J_\e$ in \eqref{eq:J_e}.
As we will see in the rest of the paper, not having an explicit formula for $J_\e$ considerably complicates the proof of our results, in which the expansion \eqref{eq:J-important} plays a crucial role.
\end{rem}

We have now all the tools to prove the following existence result.

\begin{prop}\label{ESW}
Let $Q$ satisfying \eqref{eq:Q-ass1}-\eqref{eq:Q-ass2} and $F$ satisfying \eqref{ipoF1}-\eqref{ipoF2}.
Then, there exists $\e_0>0$ such that problem  \eqref{eq:Fi} admits a unique  solution $\Phi_\e \in C^2(\R)$  for any $\e \in (0,\e_0)$.
Moreover we have the following alternatives:
\begin{itemize}
\item[(i)] If $\theta\in(1,2)$, then the profile $\Phi_\e$ satisfies \eqref{eq:stand-teta<2}; more precisely one has
\begin{equation}\label{ugualea1e-1}
	\Phi_\e(x^\e_1)=1, \qquad \qquad  \Phi_\e(x^\e_2)=-1, 
\end{equation}
where 
\begin{equation}\label{eq:x-eps}
	x^\e_1=\e \bar x_1+o(\e), \qquad \qquad x^\e_2=-\e \bar x_2+o(\e),
\end{equation}
for some $\bar x_i>0$, for $i=1,2$ which do not depend on $\e$.
\item[(ii)] If $\theta=2$, then  $\Phi_\e$ satisfies \eqref{eq:stand-teta>2} with the following {\it exponential decay}:
\begin{equation*}
	\begin{aligned}
		&1-\Phi_\e(x)\leq c_1 e^{-c_2x}, \qquad &\mbox{as } x\to+\infty,\\
		&\Phi_\e(x)+1\leq c_1 e^{c_2x}, &\mbox{as } x\to-\infty,
	\end{aligned}
\end{equation*}
for some $c_1,c_2>0$.
\item[(iii)] If $\theta>2$, then $\Phi_\e$ satisfies \eqref{eq:stand-teta>2} with {\it algebraic decay}:
\begin{equation*}
	\begin{aligned}
		&1-\Phi_\e(x)\leq d_1 x^{-d_2}, \qquad &\mbox{as } x\to+\infty,\\
		&\Phi_\e(x)+1\leq d_1x^{-d_2}, &\mbox{as } x\to-\infty,
	\end{aligned}
\end{equation*}
for some $d_1,d_2>0$.
\end{itemize}
\end{prop}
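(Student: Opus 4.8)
The plan is to turn the boundary-value problem \eqref{eq:Fi} into a first-order autonomous ODE by means of the conservation law associated with \eqref{eq:staz}, and then to read off the three regimes from the integrability of the reciprocal velocity near $\pm1$.

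\textbf{Reduction to a first-order ODE.} Multiplying the equation in \eqref{eq:Fi} by $\Phi'_\e$ and noticing that $\frac{d}{dx}P_\e(\Phi'_\e)=P'_\e(\Phi'_\e)\Phi''_\e=\e^2\Phi'_\e Q'(\e^2\Phi'_\e)\Phi''_\e$ by \eqref{def:P}, one finds that $P_\e(\Phi'_\e)-F(\Phi_\e)$ is constant; call it $E_\e$. Since $\Phi_\e$ is nondecreasing with limit $1$ it stays $\leq1$, hence $\liminf_{x\to+\infty}\Phi'_\e(x)=0$ (otherwise $\Phi_\e$ would be unbounded); evaluating the conservation law along a sequence $x_n\to+\infty$ with $\Phi'_\e(x_n)\to0$ and using $P_\e(0)=0$, $F(1)=0$ forces $E_\e=0$, so every solution of \eqref{eq:Fi} satisfies $P_\e(\Phi'_\e)=F(\Phi_\e)$. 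Setting $M:=\max_{[-1,1]}F$ and choosing $\e_0$ so small that $M<\ell\e^{-2}$ and the expansion \eqref{eq:J-important} holds for arguments in $[0,M]$ whenever $\e<\e_0$, the value $P_\e(\Phi'_\e(x))\leq M$ never reaches $P_\e(\kappa\e^{-2})=\ell\e^{-2}$, so a continuity/connectedness argument gives $\Phi'_\e(x)\in[0,\kappa\e^{-2})$ for all $x$; by Lemma \ref{lemma:new}, $\Phi_\e$ therefore solves
\[
\Phi'_\e=J_\e\bigl(F(\Phi_\e)\bigr),\qquad\Phi_\e(0)=0.
\]
Conversely, separation of variables produces such a profile, and since $u\mapsto J_\e(F(u))$ is locally Lipschitz on $(-1,1)$ and $J_\e(F(\pm1))=0$, the monotonicity in \eqref{eq:Fi} forces the values $\pm1$ to be continued as constants, so the solution is unique; this gives existence and uniqueness of $\Phi_\e\in C^2(\R)$, the $C^2$ regularity at the possible contact points being dealt with below.

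\textbf{Trichotomy.} Separating variables yields $x=\int_0^{\Phi_\e(x)}\frac{du}{J_\e(F(u))}$. Near $\pm1$ one has $F(u)\asymp|1\mp u|^\theta$ by \eqref{eq:ass-F3} and $J_\e(\xi)\asymp\sqrt\xi$ by \eqref{eq:J-important}, hence $J_\e(F(u))\asymp|1\mp u|^{\theta/2}$, so the integrals $\int_0^{\pm1}\frac{du}{J_\e(F(u))}$ converge \emph{if and only if} $\theta<2$. If $\theta\in(1,2)$, the finite numbers $x^\e_1:=\int_0^1\frac{du}{J_\e(F(u))}>0$ and $x^\e_2:=\int_0^{-1}\frac{du}{J_\e(F(u))}<0$ are the first points at which $\Phi_\e$ reaches $\pm1$; extending $\Phi_\e\equiv\pm1$ beyond them gives a profile of type \eqref{eq:stand-teta<2} satisfying \eqref{ugualea1e-1}, and the junction is $C^2$ because differentiating $P_\e(\Phi'_\e)=F(\Phi_\e)$ gives $\Phi''_\e=F'(\Phi_\e)/(\e^2Q'(\e^2\Phi'_\e))\to0$ as $x\to x^\e_{1,2}$ (using $F'(\pm1)=0$ and $Q'(0)>0$). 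If $\theta\geq2$ these integrals diverge at $\pm1$, so $\Phi_\e$ is strictly increasing on the whole line and attains $\pm1$ only in the limit, i.e.\ it is of type \eqref{eq:stand-teta>2}.

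\textbf{Sharp estimates.} For \eqref{eq:x-eps}, insert \eqref{eq:J-important} into $x^\e_1=\int_0^1\frac{du}{J_\e(F(u))}$: writing $\rho(t)=t\,\omega(t)$ with $\omega(t)\to0$, the remainder $\e^{-2}\rho(\e^2F(u))$ is $o(\e)$ times the main term $\e^{-1}\sqrt{2F(u)/Q'(0)}$, \emph{uniformly} in $u\in[0,1]$, because $\sqrt{F(u)}\leq\sqrt M$ and $\e^2F(u)\leq\e^2M\to0$; hence $\frac{1}{J_\e(F(u))}=\e\sqrt{\tfrac{Q'(0)}{2F(u)}}\,(1+o(\e))$ uniformly, and $x^\e_1=\e\int_0^1\sqrt{\tfrac{Q'(0)}{2F(u)}}\,du+o(\e)=\e\bar x_1+o(\e)$, the integral converging exactly because $\theta<2$; the analogous computation on $[-1,0]$ gives $x^\e_2=-\e\bar x_2+o(\e)$ with $\bar x_i>0$ independent of $\e$. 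For the decay when $\theta\geq2$, put $w:=1-\Phi_\e>0$ near $+\infty$ (the other end is symmetric), so $w'=-J_\e(F(1-w))<0$; once $w$ is small, \eqref{eq:ass-F3} gives $\tfrac{\lambda_1}{\theta}w^\theta\leq F(1-w)\leq\tfrac{\lambda_2}{\theta}w^\theta$ and \eqref{eq:J-important} gives $c\sqrt\xi\leq J_\e(\xi)\leq c'\sqrt\xi$ for small $\xi$, whence $-C_2w^{\theta/2}\leq w'\leq-C_1w^{\theta/2}$. Integrating this differential inequality, $\theta=2$ produces the exponential bound $1-\Phi_\e(x)\leq c_1e^{-c_2x}$ of (ii), while $\theta>2$ produces (from $(w^{1-\theta/2})'\geq C_1(\tfrac\theta2-1)$) the algebraic bound $1-\Phi_\e(x)\leq d_1x^{-2/(\theta-2)}$ of (iii).

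\textbf{Main obstacle.} The genuinely delicate point is the uniformity in the sharp estimate for $x^\e_i$: \eqref{eq:J-important} is only an asymptotic statement near the origin, yet the estimate requires dividing it by $\sqrt{F(u)}$, which vanishes as $u\to\pm1$, and then integrating up to the endpoint; one must check that the correction stays $o(\e)$ \emph{uniformly up to $\pm1$}, which works only because $F$ is bounded on $[-1,1]$, forcing the argument $\e^2F(u)$ of the remainder uniformly into a neighbourhood of $0$. As stressed in Remark \ref{rem:difficile}, the lack of a closed form for $J_\e$ is precisely what makes this step---and the two-sided control $c\sqrt\xi\leq J_\e(\xi)\leq c'\sqrt\xi$ used in the decay estimates---nontrivial. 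A secondary point needing care is the identification $E_\e=0$ together with the fact that $\Phi'_\e$ never escapes $[0,\kappa\e^{-2})$, so that the destabilizing part of $Q$ (where $Q'<0$) is irrelevant for these profiles.
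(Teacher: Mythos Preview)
Your proof is correct and follows essentially the same route as the paper: reduce \eqref{eq:Fi} to the first integral $P_\e(\Phi'_\e)=F(\Phi_\e)$, invoke Lemma~\ref{lemma:new} to invert it as $\Phi'_\e=J_\e(F(\Phi_\e))$, separate variables, and decide the trichotomy from the (non)integrability of $1/J_\e(F(u))\asymp|1\mp u|^{-\theta/2}$ at $u=\pm1$, with \eqref{eq:x-eps} coming from plugging the expansion \eqref{eq:J-important} into the integral. The only differences are cosmetic: the paper writes the correction as an explicit integral $I(\e)$ and bounds it directly, whereas you do the equivalent multiplicative expansion of $1/J_\e(F(u))$; and you supply a few details the paper leaves implicit (the identification $E_\e=0$ from the asymptotic boundary data, the $C^2$ matching at the touching points via $\Phi''_\e=F'(\Phi_\e)/(\e^2Q'(\e^2\Phi'_\e))\to0$, and the explicit differential inequalities $-C_2w^{\theta/2}\le w'\le -C_1w^{\theta/2}$ yielding the decay rates in (ii)--(iii)).
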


 \begin{proof}
In order to prove the existence of a  solution to  \eqref{eq:Fi}, we multiply the ordinary differential equation
by $\Phi'_\e=\Phi'_\e(x)$,  deducing
\begin{equation*}
	\e^2Q'(\e^2\Phi'_\e)\Phi'_\e\Phi''_\e-F'(\Phi_\e)\Phi'_\e=0,  \qquad\qquad \mbox{in }\, (-\infty,+\infty).
\end{equation*}
As a consequence,
\begin{equation}\label{eq:SW}
	\begin{cases}
		P_\e(\Phi'_\e)=F(\Phi_\e),  \qquad \qquad \mbox{in }\, (-\infty,+\infty),\\
		\Phi_\e(0)=0,
	\end{cases} 
\end{equation}
where $P_\e$ is defined in \eqref{def:P}.
In order to solve the Cauchy problem \eqref{eq:SW}, we apply Lemma \ref{lemma:new};  
hence, we need to require $F(\Phi_\e) \leq \ell \e^{-2}$, namely we choose 
\begin{equation}\label{eq:maxF}
	\e\in(0,\e_0),  \qquad \mbox{ with } \qquad \e_0=\sqrt{\frac{\ell}{\displaystyle\max_{\Phi\in[-1,1]}F(\Phi)}}.
\end{equation}
Condition \eqref{eq:maxF} ensures that we can find monotone solutions to \eqref{eq:SW} 
applying the standard method of separation of variables;
in particular, we obtain a solution $\Phi_\e$, satisfying $\Phi'_\e \in [0,\kappa \e^{-2}]$, 
which is implicitly defined by 
 \begin{equation*}
	 \int_0^{\Phi_\e(x)} \frac{du}{J_\e\left(F(u)\right) }= x,
 \end{equation*}
where $J_\e$ is defined in \eqref{eq:J_e}.
Since $J_\e\left(F(u)\right)=0$ if and only if $F(u)=0$ (that is, $u= \pm 1$), in order to prove the uniqueness of $\Phi_\e$ 
and its behavior described in the properties (i), (ii) and (iii) of the statement, 
we need to study the convergence of the following improper integrals
\begin{equation}\label{int_SW}
	\int_{0}^1 \frac{du}{J_\e\left( F(u)\right)} \qquad \mbox{ and } \qquad \int_{-1}^0 \frac{du}{J_\e\left( F(u)\right)}.
\end{equation}
Substituting \eqref{eq:J-important} in the first integral of \eqref{int_SW}, we end up with 
\begin{equation}\label{int_SW2}
	\int_{0}^1 \frac{du}{J_\e\left( F(u)\right)}=\e\bar x_1+I(\e),
\end{equation}
where 
\begin{align*}
	\bar x_1&:=\sqrt{\frac{Q'(0)}{2}}\int_0^1 \frac{du}{\sqrt{F(u)}}\\
	I(\e)&:=-\e^{-1}\sqrt{\frac{Q'(0)}{2}}\int_0^1\frac{\rho(\e^2F(u))}{\left(\sqrt{\frac{2}{\e^2Q'(0)} F(u)} +\e^{-2}\rho(\e^2F(u))\right)\sqrt{F(u)}}\,du.
\end{align*}
The crucial point is that the character of the integral in \eqref{int_SW2} is simply given by $\bar x_1$, 
since there exists $\e_0>0$ such that $|I(\e)|<\infty$ for any $\e\in(0,\e_0)$. 
Indeed, using the estimate $|\rho(\e^2F(u))| \leq C \e^2 F(u)$, one gets
\begin{align*}
	\int_0^1 \frac{|\rho(\e^2F(u))|}{\left(\sqrt{\frac{2}{\e^2Q'(0)} F(u)} +\e^{-2}\rho(\e^2F(u))\right)\sqrt{F(u)}}\,du &\leq 
	\int_0^1  \frac{C \e^2\sqrt{F(u)}}{\left(\sqrt{\frac{2}{\e^2Q'(0)} F(u)} +\e^{-2}\rho(\e^2F(u))\right)}\,du \\
	& \leq  \int_0^1  \frac{C \e^3}{\left(\sqrt{\frac{2}{Q'(0)} }+\frac{\rho(\e^2F(u))}{\e\sqrt{F(u)}}\right)}\,du,
\end{align*}
and we can choose $\e>0$ sufficiently small such that
\begin{equation*}
	\sqrt{\frac{2}{Q'(0)} } +\frac{\rho(\e^2F(u))}{\e\sqrt{F(u)}} \geq \sqrt{\frac{2}{Q'(0)} } -C\e\sqrt{F(u)} >0.
\end{equation*}
Moreover, notice that $\bar x_1$ does not depend on $\e$, while $I(\e)=o(\e)$.
By using \eqref{eq:ass-F3}, we obtain
\begin{equation*}
	\bar x_1 \sim  \int_0^1 \frac{du}{(1-u)^{\frac{\theta}{2}}}.
\end{equation*}
Hence, $\bar x_1 < +\infty$ if and only if $\theta< 2$, and the point (i) of the thesis follows;
the first equality in \eqref{eq:x-eps} is a consequence of \eqref{int_SW2}.
Going further, points (ii)-(iii) of the statement are a consequence of the standard theory of ODE applied to \eqref{eq:SW}, together with the fact that 
$$J_\e(F(s)) \approx |1-s|^{\frac{\theta}{2}}, \qquad s \approx 1.$$
The  computations are completely similar if considering the second integral in \eqref{int_SW}, and we thus proved the existence of  a unique solution of \eqref{eq:Fi} satisfying properties (i), (ii) and (iii).
Precisely,  if $\theta\geq2$ there exists a unique solution of \eqref{eq:SW}, while if $\theta\in(1,2)$, \eqref{eq:SW} has infinitely many solutions, but
the additional requirement $\Phi_\e'(x)\geq0$, for any $x\in\R$, guarantees that there is a unique solution of \eqref{eq:Fi}.
\end{proof}

\begin{rem}We notice that the condition $\Phi'_\e \in [0,\kappa \e^{-2}]$ allows also for high values of the first derivative; to be more precise, one has 
\begin{equation}\label{eq:Fi-firstderivative}
	|\Phi_\e'(x)|\leq \kappa\e^{-2}, \qquad \qquad \mbox{ for any } x\in\R.
\end{equation}
\end{rem}

As a corollary of Proposition \ref{ESW}, we can prove existence of a unique solution to \eqref{eq:Fi-dec}, sharing similar properties to (i)-(ii) and (iii). 
\begin{cor}\label{cor:standwavedecr}
Under the same assumptions of Proposition \ref{ESW}, there exists $\e_0>0$ such that problem \eqref{eq:Fi-dec} admits a unique solution 
$\Psi_\e$  for any $\e \in (0,\e_0)$. 
Moreover, if $\theta\in(1,2)$, then
\begin{equation*}
	\Psi_\e(-x^\e_1)=1, \qquad \qquad  \Psi_\e(-x^\e_2)=-1, 
\end{equation*}
where $x^\e_i$, $i=1,2$ are defined in \eqref{eq:x-eps}.
On the other hand, if $\theta=2$ ($\theta>2$) the profile $\Psi_\e$ has an exponential (algebraic) decay towards the states $\mp1$.
\end{cor}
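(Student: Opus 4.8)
The plan is to deduce $\Psi_\e$ directly from the increasing standing wave $\Phi_\e$ furnished by Proposition \ref{ESW}, exploiting the reflection symmetry $x\mapsto -x$. Concretely, I would set $\Psi_\e(x):=\Phi_\e(-x)$ and check that it solves \eqref{eq:Fi-dec}. The boundary and sign conditions are immediate: $\Psi_\e(0)=\Phi_\e(0)=0$, $\Psi_\e'(x)=-\Phi_\e'(-x)\le 0$ for every $x\in\R$, and $\lim_{x\to\pm\infty}\Psi_\e(x)=\lim_{y\to\mp\infty}\Phi_\e(y)=\mp1$.

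The one point that deserves a word is the invariance of the ODE. Differentiating twice, $\Psi_\e''(x)=\Phi_\e''(-x)$, so
\[
\e^2Q'\bigl(\e^2\Psi_\e'(x)\bigr)\Psi_\e''(x)-F'(\Psi_\e(x))=\e^2Q'\bigl(-\e^2\Phi_\e'(-x)\bigr)\Phi_\e''(-x)-F'(\Phi_\e(-x)).
\]
Since $Q$ is odd by \eqref{eq:Q-ass1}, its derivative $Q'$ is even, hence $Q'\bigl(-\e^2\Phi_\e'(-x)\bigr)=Q'\bigl(\e^2\Phi_\e'(-x)\bigr)$, and the right-hand side equals $\bigl[\e^2Q'(\e^2\Phi_\e')\Phi_\e''-F'(\Phi_\e)\bigr]\big|_{-x}=0$ by the equation satisfied by $\Phi_\e$. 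Thus $\Psi_\e\in C^2(\R)$ is a solution of \eqref{eq:Fi-dec}.

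For uniqueness, I would argue in the same vein: if $\tilde\Psi_\e$ is any solution of \eqref{eq:Fi-dec}, then the computation above shows that $x\mapsto\tilde\Psi_\e(-x)$ solves \eqref{eq:Fi}; by the uniqueness statement in Proposition \ref{ESW} it must coincide with $\Phi_\e$, whence $\tilde\Psi_\e(x)=\Phi_\e(-x)=\Psi_\e(x)$. Finally, all the qualitative information transfers through the reflection. In the case $\theta\in(1,2)$, \eqref{ugualea1e-1}--\eqref{eq:x-eps} give $\Psi_\e(-x^\e_1)=\Phi_\e(x^\e_1)=1$ and $\Psi_\e(-x^\e_2)=\Phi_\e(x^\e_2)=-1$ with the same $x^\e_i$; for $\theta=2$ (resp.\ $\theta>2$) the exponential (resp.\ algebraic) decay of $\Psi_\e$ towards $\mp1$ as $x\to\pm\infty$ is inherited from the corresponding decay of $\Phi_\e$ at $\mp\infty$ stated in Proposition \ref{ESW}(ii)--(iii).

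There is no genuine obstacle here — the corollary is a symmetry reduction — and the only subtlety is recognizing that it is the oddness of $Q$ in \eqref{eq:Q-ass1} (equivalently the evenness of $Q'$) that makes the reflection $x\mapsto-x$ a symmetry of \eqref{eq:staz}, so that no symmetry assumption on $F$ is needed.
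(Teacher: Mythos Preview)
Your proof is correct and follows exactly the same approach as the paper, which defines $\Psi_\e(x):=\Phi_\e(-x)$ and invokes the evenness of $Q'$ (coming from the oddness of $Q$) to conclude. You have simply spelled out in detail what the paper dismisses as ``a simple exercise'', including the uniqueness argument via reflection back to \eqref{eq:Fi}.
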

\begin{proof}
Using the symmetry of $Q$ and, in particular, the fact that $Q'$ is an even function, it is a simple exercise to verify that $\Psi_\e(x):=\Phi_\e(-x)$, 
with $\Phi_\e$ given by Proposition \ref{ESW}, is the unique solution to \eqref{eq:Fi-dec}.
\end{proof}

\begin{rem}
The existence of a unique  solution $\Psi_\e$ to  \eqref{eq:Fi-dec} can be proven independently on the one of $\Phi_\e$: 
indeed, it is enough to adapt the proof of Proposition \ref{ESW} by inverting the equation $P_\e(s)=\xi$ in the interval $[-\kappa\e^{-2},0]$ 
(in this case the inverse is $-J_\e$, see \eqref{eq:J_e}), 
and obtaining the existence of a unique solution $\Psi_\e$ with negative derivative $\Psi'_\e \in [-\kappa \e^{-2},0]$.
\end{rem}

The previous results are instrumental to prove the existence of a {\it special} class of stationary solutions on a bounded interval in the case $\theta \in (1,2)$, 
as we will see in the next section. 
Indeed, in such a case the standing waves reaches $\pm 1$ for a finite value of the $x$-variable and with zero derivative, 
so that we are able to construct infinitely many steady states oscillating between $\pm 1$ and satisfying the boundary conditions \eqref{eq:Neu}. 
On the contrary, if $\theta \geq 2$, all the standing waves satisfy \eqref{eq:stand-teta>2}, so that they never satisfy
the homogeneous Neumann boundary conditions and can never solve \eqref{eq:staz}-\eqref{eq:Neu} in any bounded interval.

\subsection{Compactons}
We here consider the so-called \emph{compactons}, which are by definition stationary solutions connecting two phases on a finite interval. 
More explicitly, we prove the existence of infinite  solutions to the stationary problem associated to \eqref{eq:Q-model}-\eqref{eq:Neu}, namely
\begin{equation}\label{eq:comp}
	Q(\e^2 \varphi')'-F'(\varphi)=0, \qquad \qquad \varphi'(a)=\varphi'(b)=0,
\end{equation}  
oscillating between $-1$ and $+1$ (touching them), provided that $\e$ is sufficiently small. 
The existence of such solutions is shown by proving that, given an arbitrary set of real numbers in $[a,b]$, for sufficiently small $\e$, 
there are two solutions $\varphi_1$ and $\varphi_2$ to \eqref{eq:comp} having such numbers as zeros, satisfying 
\begin{equation}\label{varphi12}
	\varphi_1(a)=-1 \qquad\quad  \mbox{ and } \quad \qquad \varphi_2(a)=+1,
\end{equation}
and oscillating between $-1$ and $+1$ ( $+1$ and $-1$, respectively).   

\begin{prop}\label{prop:comp}
Let $1<\theta<2$, $N\in \mathbb{N}$ and let $h_1,h_2,\dots, h_N$ be any $N$ real numbers such that $a<h_1<h_2<\cdots <h_N<b$. 
Then, for any $\e\in(0,\bar\e)$ with $\bar\varepsilon>0$ sufficiently small, there exist two solutions $\varphi_1$ and $\varphi_2$ to \eqref{eq:comp} satisfying \eqref{varphi12}, oscillating between $-1$ and $+1$ and between $+1$ and $-1$ respectively, and having precisely $N$ zeros at $h_1,h_2,\dots, h_N$. 
\end{prop}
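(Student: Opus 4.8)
The plan is to build the compactons by gluing together suitably reflected and translated copies of the standing waves $\Phi_\e$ and $\Psi_\e$ furnished by Proposition \ref{ESW} and Corollary \ref{cor:standwavedecr} in the regime $\theta\in(1,2)$. The key structural fact we exploit is that, for $\theta\in(1,2)$, the increasing standing wave reaches the value $1$ at the finite point $x_1^\e$ and the value $-1$ at $x_2^\e$, \emph{with zero derivative there} (since $P_\e(\Phi'_\e)=F(\Phi_\e)=0$ at those points forces $\Phi'_\e=0$ by Lemma \ref{lemma:new}). Hence the ``half-layer'' piece of $\Phi_\e$ between consecutive zeros of $\Phi_\e\mp1$ can be truncated and attached to the constant functions $\pm1$ in a $C^1$ fashion: the glued function is continuous, has continuous first derivative (both sides vanish), and one checks it is a genuine (weak, then classical by elliptic regularity away from the touching points) solution of \eqref{eq:staz}, because on the constant stretches $Q(\e^2\varphi')'=0=F'(\varphi)$ and on the layer stretches it solves \eqref{eq:staz} by construction. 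This is exactly the mechanism described after Corollary \ref{cor:standwavedecr}.

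Concretely, I would first fix the prescribed zeros $a<h_1<\dots<h_N<b$ and set $h_0:=a$, $h_{N+1}:=b$. On each subinterval $[h_{i-1},h_i]$ one wants $\varphi$ to be identically $\pm1$ near the endpoints and to make a single monotone transition somewhere strictly inside. Using the translation invariance of the standing-wave equation and the reflection symmetry (Corollary \ref{cor:standwavedecr}: $\Psi_\e(x)=\Phi_\e(-x)$), place one copy of the appropriate monotone half-wave so that it begins its transition after $h_{i-1}$ and completes it before $h_i$; the transition occupies an $x$-window of width $x_1^\e+x_2^\e = O(\e)$ by \eqref{eq:x-eps}, so for $\e$ small enough it fits inside any fixed subinterval, and moreover one can center it so that $h_i$ itself is the point where $\varphi$ crosses $0$ — this is where the prescribed zeros enter, and it uses the intermediate value property together with strict monotonicity of the half-wave to locate the unique crossing. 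Alternate the orientation of successive transitions (increasing / decreasing) so that $\varphi$ oscillates between $-1$ and $+1$; choosing the first transition to start from $-1$ gives $\varphi_1$, starting from $+1$ gives $\varphi_2$. The Neumann conditions at $a$ and $b$ hold because $\varphi\equiv\pm1$ on a neighbourhood of each endpoint (the first/last transition is interior), so $\varphi'(a)=\varphi'(b)=0$.

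After the construction I would verify three things. First, regularity: the candidate $\varphi$ is $C^1$ everywhere by the zero-derivative matching, and it solves \eqref{eq:staz} in the classical sense on the open transition intervals and trivially on the flat intervals; across the gluing points one uses that $Q(\e^2\varphi')$ is continuous with value $Q(0)=0$ and that $F'(\varphi)=0$ there, so $\varphi$ is a distributional solution and hence, by bootstrapping ($Q\in C^3$), as smooth as the equation allows away from the touching set. Second, the zero set is \emph{exactly} $\{h_1,\dots,h_N\}$: between two consecutive transitions $\varphi\equiv\pm1\neq0$, and within a transition $\varphi$ is strictly monotone so has a single zero, placed at $h_i$ by design. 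Third, the uniformity in $\e$: the only constraint is $\e<\bar\e$ where $\bar\e$ is chosen so that (a) $\e<\e_0$ from \eqref{eq:maxF} so the standing waves exist, and (b) the transition width $O(\e)$ is smaller than $\min_i(h_i-h_{i-1})$; both give an explicit $\bar\e$ depending only on $F,Q,N$ and the $h_i$.

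The main obstacle I expect is not the gluing itself but making the $C^1$ matching rigorous at the touching points and confirming that the glued function is a \emph{bona fide} solution rather than merely a weak one with a possible singularity where $\varphi=\pm1$: one must check that $Q(\e^2\varphi')'$, interpreted correctly, equals $F'(\varphi)$ there, which relies on $F'(\pm1)=0$ (from \eqref{ipoF1}) and on the precise rate $\Phi'_\e\sim$ (power of the distance to $\pm1$) near the touching point — this is the $\theta<2$ integrability that made $\bar x_1<\infty$ in Proposition \ref{ESW}, and the same estimate $J_\e(F(s))\approx|1-s|^{\theta/2}$ controls how $\varphi'\to0$. A secondary technical point is bookkeeping the alternation and the exact placement of each half-wave's zero at $h_i$ via a translation parameter; this is elementary once the single-transition building block is in hand, but must be written carefully to conclude there are precisely two such $\varphi$ with the prescribed sign at $a$.
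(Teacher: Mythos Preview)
Your proposal is correct and follows essentially the same approach as the paper: translate and reflect the standing wave $\Phi_\e$ so that its unique zero sits at each prescribed $h_i$, glue in constant stretches $\pm1$ between consecutive transitions, and use the smallness of the transition width $x_1^\e-x_2^\e=O(\e)$ to fit everything inside $[a,b]$. The paper is terser about regularity (it simply asserts $\varphi_1\in C^2$ ``by construction''), whereas you spell out the $C^1$ matching and the role of $F'(\pm1)=0$; note also that your description of placing the transition ``strictly inside $[h_{i-1},h_i]$'' is slightly at odds with centering its zero at $h_i$---the transition in fact straddles $h_i$, as in the paper's explicit piecewise formula.
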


\begin{proof}
We start by proving the existence of the solution $\varphi_1$ to \eqref{eq:comp} on the interval $[a,b]$ satisfying the first condition in  \eqref{varphi12}, oscillating between $-1$ and $+1$ and having $h_1,h_2,\dots, h_N$ as zeros.
To this aim, we consider the function 
$$\Phi_{\e}^1(x):=\Phi_\e(x-h_1), \qquad \,  x\in\R,$$ 
where $\Phi_\e$ is the increasing standing wave solution of Proposition \ref{ESW}. 
Then, the function $\Phi_{\e}^1$ has a zero at $h_1$.
Furthermore, by \eqref{eq:Fi} and \eqref{ugualea1e-1}, recalling that 
\begin{equation*}
	x_1^\e =\e \bar{x}_1+o(\e) \quad \text{and}\quad x_2^\e =-\e \bar{x}_2+o(\e),
\end{equation*} 
for some $\bar{x}_1, \bar{x}_2>0$,
we can conclude that $\Phi_{\e}^1$ takes the values $-1$ on $(-\infty, h_1+x_2^\e]$ and $+1$ on $[h_1+x_1^\e, +\infty)$;
we notice that if $\e$ is sufficiently small, then $h_1+x_2^\e  <h_1$. 
Let us now fix $\e>0$ small enough so that $h_1+x_2^\e >a$; 
the restriction of $\Phi_{\e}^1$ to the interval $[a,h_1+x_1^\e]$, denoted with the same symbol, turns out to be equal to $-1 $ for every $x\in [a,h_1+x_2^\e]$, touching $+1$ at $h_1+x_1^\e$. 
Let us introduce the notation 
\begin{equation*}
y_j^\e:=
\begin{cases}
x_1^\e \qquad & \text{if $j$ is odd}, \\
-x_2^\e  &  \text{if $j$ is even}, \\
\end{cases}	
\end{equation*}
for every $j=1,\dots, N$, and for every $i=1,\dots, N-1$ let us  define the function  
\begin{equation*}
	\Phi_{\e}^{i+1}(x):=\Phi_\e((-1)^{i}(x-h_{i+1})),  \qquad x\in \mathbb{R}.
\end{equation*}
For each fixed $i=1,\dots, N-1$, the function $\Phi_{\e}^{i+1}$ has a zero at $h_{i+1}$, and takes the values $-1$ for every $x \geq h_{i+1}+y_{i+1}^\e$ and $+1$ for every $x\leq h_{i+1}-y_{i}^\e$ as long as $i$ is odd, otherwise it takes the values $-1$ for every $x \leq h_{i+1}-y_{i}^\e$ and $+1$ for every $x\geq h_{i+1}+y_{i+1}^\e$. 
Up to choosing $\e$ possibly smaller in order to have $h_i+ y_i^\e \leq h_{i+1} -y_i^\e$, for every $i=1,\dots, N-1$ (namely $2y_i^\e \leq h_{i+1} -h_i $), 
the restriction of the function $\Phi_{\e}^{i+1}(x)$ to the interval $[h_i+y_i^\e, h_{i+1}+y_{i+1}^\e ]$ (still using the same notation), takes the value $(-1)^{i+1}$ for every $x\in [h_i+y_i^\e, h_{i+1}-y_i^\e]$, touching $(-1)^i$ at $h_{i+1}+y_{i+1}^\e$.  
Selecting $\e>0$ sufficiently small so that $h_N+y_N^\e< b$, we end up defining $\varphi_1$ in the following way
\begin{equation*}
\varphi_1(x):=
\begin{cases}
\Phi_\e^1(x), \qquad & x\in [a,h_1+x_1^\e], \\
\Phi_\e^{i+1}(x),  & x\in [h_i+y_i^\e, h_{i+1}+y_{i+1}^\e]\quad  \text{ and }i=1,\dots,N-1, \\
(-1)^{N-1}, & x\in [h_N+y_N^\e,b].
\end{cases}	
\end{equation*}
{By construction}, the resulting map {$\varphi_1\in C^2([a,b])$} solves \eqref{eq:comp} and has exactly $N$ zeros at points $h_1,h_2,\dots, h_N$.

Arguing as above, we can construct the compacton $\varphi_2$ which satisfies the second condition in  \eqref{varphi12} in the following way
 \begin{equation*}
\varphi_2(x):=
\begin{cases}
\Phi_\e^1(-x), \qquad & x\in [a,h_1-x_2^\e], \\
\Phi_\e^{i+1}((-1)^{i+1}x), &  x\in [h_i+y_{i+1}^\e, h_{i+1}+y_i^\e]\quad  \text{ and } i=1,\dots,N-1, \\
(-1)^{N}, & x\in [h_N+y_{N-1}^\e,b].
\end{cases}	
\end{equation*}
The proof is thereby complete. 
\end{proof}

We, again, point out that since the integrals in \eqref{int_SW} are finite only if $\theta \in(1,2)$, 
the compactons solutions constructed in Proposition \ref{prop:comp} exist only in such a case.

 \subsection{Periodic solutions  for $\theta \geq 2$} 
As already mentioned in the previous section, when $\theta \geq 2$ the integrals in \eqref{int_SW} diverge, so that solutions to \eqref{eq:comp} cannot touch  the values $\pm 1$ and compactons solutions do not exist anymore. 
In this case, we construct a different type of stationary solutions with a transition layer structure, 
which can be seen as a restriction of {\it periodic solutions} on the whole real line. 
The study of all periodic solutions to \eqref{eq:staz} is beyond the scope of the paper and it is strictly connected to the specific form of the potential $F$ which, 
in our case, is a very generic function satisfying \eqref{ipoF1}-\eqref{ipoF2} and may give raise to infinitely many kinds of periodic solutions in the whole real line,
see two examples in Figures \ref{fig3}-\ref{fig4} below.
Indeed, assumptions \eqref{ipoF1}-\eqref{ipoF2} only assure that $F$  is a double well potential with wells of equal depth in $\pm 1$, 
and describe its behavior close to these minimal points, while give no informations of the shape of $F$ between them.  
Here, we are interested in periodic solutions oscillating between values close to $\pm1$ and assumptions \eqref{ipoF1}-\eqref{ipoF2} are enough to prove their existence.
Denote by
\begin{equation}\label{eq:Gamma}
	\mathcal{Z}:=\{u\in(-1,1)\, :\, F'(u)=0\} \qquad \mbox{ and } \qquad \Gamma:=\min_{\mathcal Z}F(u)>0.
\end{equation}  
As it was mentioned before, assumptions \eqref{ipoF1}-\eqref{ipoF2} give no information on the structure of $\mathcal{Z}$, that is the set of all the critical points of $F$ inside $(-1,1)$,
that could be a discrete set or even an interval.
Multiplying \eqref{eq:staz} by $\varphi'$, we deduce that
\begin{equation}\label{eq:periodic}
	P_\e(\varphi')-F(\varphi)= C, \qquad \qquad \mbox{ in } \R,
\end{equation}
where $C$ is an appropriate integration constant.
For instance, if $C=0$ we obtain the constant solutions $\pm1$ or the standing wave constructed in Proposition \ref{ESW}; 
in the next result, we prove that the choice $C\in\left(-\Gamma,0\right)$ gives raise to periodic (bounded) solutions.

\begin{prop}\label{periodicR}
Let $\e>0$, $P_\e$ as in \eqref{def:P}, $F$ satisfying \eqref{ipoF1}-\eqref{ipoF2} with $\theta\geq2$ and $C\in\left(-\Gamma,0\right)$, where $\Gamma$ is defined in \eqref{eq:Gamma}. 
Denote by $\bar s\in(0,1)$ the unique number such that $-F(\bar s)=C$.
Then there exists $\e_0>0$ such that, for any $\e\in(0,\e_0)$, problem \eqref{eq:periodic} admits periodic solutions $\Phi_{T_\e}$, 
oscillating between $-\bar s$ and $\bar s$, with fundamental period $2T_\e$, where
\begin{equation}\label{period}
	T_\e(\bar s):=\int_{-\bar s}^{\bar s} \frac{ds}{J_\e\left( F(s)-F(\bar s)\right)},
\end{equation}
with $J_\e$ defined in \eqref{eq:J_e}.
\end{prop}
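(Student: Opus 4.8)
The plan is to construct the periodic solution by the same phase-plane / separation-of-variables technique already used for the standing waves in Proposition \ref{ESW}, exploiting the first-order reduction \eqref{eq:periodic}. First I would observe that, since $C\in(-\Gamma,0)$ and $F$ is continuous with $F(0)=0$, $F(\pm1)=0$ and $F>0$ on $(-1,1)\setminus\{0\}$ near the wells behaving like $|1\mp s|^\theta$, the level set $\{F(s)=-C\}\cap(-1,1)$ contains a point $\bar s\in(0,1)$; moreover $-C<\Gamma=\min_{\mathcal Z}F$ guarantees that on $(0,\bar s]$ one has $F(s)<F(\bar s)$ only in the interior and $F(s)-F(\bar s)$ vanishes (to first order, since $\bar s\notin\mathcal Z$) exactly at $s=\bar s$, with an analogous statement on $[-\bar s,0)$; in particular $\bar s$ is unique with this property once we restrict to the component of $0$. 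This is what makes the integrand in \eqref{period} integrable: near $s=\bar s$ we have $F(s)-F(\bar s)\sim |F'(\bar s)|(\bar s - s)$, hence $J_\e(F(s)-F(\bar s))\sim \sqrt{\tfrac{2}{\e^2 Q'(0)}|F'(\bar s)|(\bar s-s)}$ by the expansion \eqref{eq:J-important}, so the singularity is of type $(\bar s-s)^{-1/2}$ and $T_\e(\bar s)<+\infty$.

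Next I would build one monotone branch. For $\e\in(0,\e_0)$ with $\e_0$ chosen (as in \eqref{eq:maxF}) so that $F(\varphi)-F(\bar s)\le F(\varphi)\le \ell\e^{-2}$ whenever $\varphi\in[-\bar s,\bar s]$, Lemma \ref{lemma:new} lets us invert $P_\e$: along an increasing arc, \eqref{eq:periodic} with integration constant $C=-F(\bar s)$ reads $\varphi'=J_\e(F(\varphi)-F(\bar s))$, and separation of variables gives the increasing solution implicitly through
\begin{equation*}
	\int_{0}^{\Phi_{T_\e}(x)}\frac{ds}{J_\e(F(s)-F(\bar s))}=x,\qquad x\in\Big[-\tfrac{T_\e}{2},\tfrac{T_\e}{2}\Big],
\end{equation*}
which, by the finiteness of \eqref{period} and the strict positivity of $J_\e(F(s)-F(\bar s))$ on $(-\bar s,\bar s)$, defines a $C^1$ (indeed $C^2$ by bootstrapping in \eqref{eq:staz}) strictly increasing function running from $-\bar s$ at $x=-T_\e/2$ to $\bar s$ at $x=T_\e/2$, with $\varphi'=0$ at both endpoints since $J_\e(0)=0$. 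One then reflects: define the decreasing arc on $[T_\e/2,3T_\e/2]$ by $\Phi_{T_\e}(x):=\Phi_{T_\e}(T_\e-x)$, which solves the ODE because $P_\e$ and $J_\e$ are even, and glue. The matching at $x=T_\e/2$ is $C^2$ precisely because $\varphi'$ and $\varphi''$ both vanish there (the value $\pm\bar s$ is a simple zero of $F(\cdot)-F(\bar s)$, so $\varphi''=\tfrac12 P_\e'(\varphi')^{-1}\cdot(\dots)$ stays bounded and equals zero by the equation $\e^2 Q'(\e^2\varphi')\varphi''=F'(\varphi)$ evaluated where $F'(\pm\bar s)\ne0$ — here I must be careful: $F'(\bar s)\ne0$ forces $\varphi''\ne0$ at the turning point, so the gluing is only $C^1$ unless I check the one-sided second derivatives agree, which they do by the reflection symmetry). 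Extending $2T_\e$-periodically yields $\Phi_{T_\e}$ on $\R$ with the stated fundamental period.

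The main obstacle I anticipate is exactly this regularity at the turning points $x$ where $\Phi_{T_\e}=\pm\bar s$: unlike the compacton case $\theta\in(1,2)$, here $F'(\pm\bar s)\ne 0$, so the second derivative does not vanish there and one must argue via the symmetry of the construction (the solution is even about each turning point after translation) that the glued function is genuinely a classical $C^2$ — or at least $C^1$ with $\varphi''$ matching one-sidedly — solution of \eqref{eq:staz}, rather than merely a weak one. A clean way is to note that $x\mapsto\Phi_{T_\e}(x)$ and $x\mapsto \Phi_{T_\e}(T_\e-x)$ are two solutions of the same second-order ODE agreeing to first order at $x=T_\e/2$ (same value $\bar s$, same derivative $0$); if $Q'(\e^2\cdot)\ne0$ near the relevant derivative value — and it is, since $|\varphi'|\le\kappa\e^{-2}$ with equality only where $F=\ell\e^{-2}$, excluded for $\e<\e_0$ — the equation can be written in the form $\varphi''=g(\varphi,\varphi')$ with $g$ continuous, so Cauchy uniqueness (or simply continuity of $g$ plugged into the equation) forces the one-sided second derivatives to coincide, giving $\Phi_{T_\e}\in C^2(\R)$. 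A secondary, more bookkeeping-type obstacle is pinning down $\e_0$ uniformly so that the inversion domain $[0,\kappa\e^{-2}]$ of $J_\e$ comfortably contains $F(\varphi)-F(\bar s)$ for all admissible $\varphi$; this is the same computation as \eqref{eq:maxF} and presents no real difficulty. Finally, uniqueness of $\bar s$ and the claim that $C\in(-\Gamma,0)\Leftrightarrow$ such periodic orbit exists follow from monitoring the sign of $F(s)-F(\bar s)$ on the nodal component of $0$, using $\Gamma=\min_{\mathcal Z}F$ to rule out the orbit hitting another critical point before closing up.
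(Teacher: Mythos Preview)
Your approach is essentially identical to the paper's: invert $P_\e$ via $J_\e$ from Lemma \ref{lemma:new}, separate variables to build one monotone arc from $-\bar s$ to $\bar s$, verify that the half-period integral \eqref{period} converges using the Taylor expansion $F(s)-F(\bar s)\sim F'(\bar s)(s-\bar s)$ (with $F'(\bar s)\ne0$ since $\bar s\notin\mathcal Z$), then reflect and extend $2T_\e$-periodically. One slip to clean up: you write $F(0)=0$, but in fact $F(0)>0$ by \eqref{ipoF1}; the existence and uniqueness of $\bar s\in(0,1)$ follow instead from $F(1)=0<-C<\Gamma\le F$ on $\mathcal Z$ together with the intermediate value theorem and strict monotonicity of $F$ between the rightmost critical point and $1$ --- otherwise your discussion of the $C^2$ gluing at the turning points is actually more careful than what the paper provides.
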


\begin{proof}
Fix $C\in\left(-\Gamma,0\right)$ and notice that the assumption \eqref{ipoF1} together with definition \eqref{eq:Gamma} 
imply the existence of a unique $s\in(0,1)$ such that $-F(\bar s)=C$.
Since any periodic solution is unique up to translation we can assume, without loss of generality,  that $\Phi_{T_\e}(0)=-\bar s$.
By using \eqref{eq:J_e}, from equation \eqref{eq:periodic} we can infer that $\Phi_{T_\e}$ is implicitly defined as
\begin{equation*}
	\int_{-\bar s}^{\Phi_{T_\e(x)}} \frac{du}{J_\e\left( F(u)+C\right)}=x,
\end{equation*}
provided that $F(u)+C \in [0,\ell\e^{-2}])$, where $\ell$ is defined in \eqref{def:ell}. 
Hence, we need to require
\begin{equation*}
	\max_{u \in [-\bar s,\bar s]}  F(u)+C \leq \ell \e^{-2},
\end{equation*}
and, since $C<0$, such condition is satisfied again as soon as \eqref{eq:maxF} holds.
We now have to verify the convergence of the improper integral
\begin{equation}\label{eq:int-period}
	\int_{-\bar s}^{\bar s}  \frac{ds}{J_\e\left(F(s)-F(\bar s)\right)}.
\end{equation}
By using \eqref{eq:J-important} and the Taylor expansion $F(s)-F(\bar s)= F'(\bar s)(s-\bar s)+o\left(|s-\bar s|\right)$, with $F'(\bar s)<0$ because $\bar s\in(0,1)$, we deduce
\begin{equation*}
	\int_{0}^{\bar s}  \frac{ds}{J_\e\left( F(s)-F(\bar s)\right)}\sim
	\int_{0}^{\bar s}  \frac{ds}{\sqrt{F'(\bar s)(s-\bar s)}}<+\infty.
\end{equation*}
Similarly, one can prove that 
\begin{equation*}
	\int_{-\bar s}^{0}  \frac{ds}{J_\e\left( F(s)-F(\bar s)\right)}\sim
	\int_{-\bar s}^{0}  \frac{ds}{\sqrt{F'(-\bar s)(s+\bar s)}}<+\infty,
\end{equation*}
and, as a consequence, the improper integral \eqref{eq:int-period} is finite.
Therefore, we have constructed a solution $\Phi_{T_\e}:[0, T_\e(\bar s)]$, satisfying
$$\Phi_{T_\e}(0)=-\bar s, \quad \Phi_{T_\e}\left({T_\e(\bar s)}\right)=\bar s \qquad \mbox{and} \qquad  \Phi'_{T_\e}(0)=\Phi'_{T_\e}\left({T_\e(\bar s)}\right)=0.$$
Let us now define
$$\Phi_{T_\e}(x)=\Phi_{T_\e}\left(T_\e(\bar s)-x\right), \qquad \qquad x\in\left[{T_\e(\bar s)},2T_\e(\bar s)\right].$$
It is easy to check that $\Phi_{T_\e}$ solves \eqref{eq:periodic} in  $\left[{T_\e(\bar s)},2T_\e(\bar s)\right]$ and
$$\Phi_{T_\e}\left(T_\e(\bar s) \right)=-\bar s, \qquad \Phi'_{T_\e}\left(T_\e(\bar s) \right)=0.$$
We have thus extended the solution in the interval  $\left[{T_\e(\bar s)},2T_\e(\bar s)\right]$, and thus constructed a solution in $\left[0,2T_\e(\bar s)\right]$; by iterating the same argument, we can extend the solution to the whole real line by $2T_\e(\bar s) - $periodicity, and the proof is complete.  
\end{proof}

We now make use of the solutions constructed in Proposition \ref{periodicR} to construct solutions to \eqref{eq:comp} having $N$ equidistant zeroes in $[a,b]$.

\begin{prop}\label{periodic:bounded}
Let $\e>0$, $Q$ satisfying \eqref{eq:Q-ass1}-\eqref{eq:Q-ass2}, $F$ satisfying \eqref{ipoF1}-\eqref{ipoF2} with $\theta\geq2$ and let us fix $N \in \N$. 
Then, if $\e\in(0,\bar\e)$ with $\bar\e>0$ sufficiently small, there exists $\bar s$ close to (and strictly less than) $+1$, such that 
problem \eqref{eq:comp} admits a solution oscillating between $\pm\bar s$ and with $N$ equidistant zeroes inside the interval $[a,b]$, located at 
\begin{equation}\label{locationzeros}
	h_1= \frac{T_\e(\bar s)}{2}+a \qquad \mbox{and} \qquad h_i= h_{i-1}+T_\e(\bar s), \quad i=2, \dots, N,
\end{equation}
where $T_\e(\bar s)$ is defined in \eqref{period}.
\end{prop}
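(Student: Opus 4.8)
The plan is to paste together suitably scaled and reflected copies of the periodic profile $\Phi_{T_\e}$ produced in Proposition \ref{periodicR}, choosing the half-period to fit $N$ equidistant layers exactly inside $[a,b]$. First I would work backwards from the desired geometry: if the $N$ zeros are equidistant and located as in \eqref{locationzeros}, then the distance between consecutive zeros is $T_\e(\bar s)$, and the zeros at $h_1$ and $h_N$ must sit at distance $T_\e(\bar s)/2$ from $a$ and $b$ respectively; hence we need the half-period $T_\e(\bar s)$ to satisfy $N\,T_\e(\bar s) = b-a$, i.e. $T_\e(\bar s) = (b-a)/N$. So the real content is a \emph{shooting/continuity} argument: show that as $\bar s$ ranges over $(0,1)$ (equivalently, as $C = -F(\bar s)$ ranges over $(-\Gamma,0)$), the half-period $T_\e(\bar s)$ defined in \eqref{period} takes the value $(b-a)/N$ for $\bar s$ sufficiently close to $1$, once $\e$ is small.

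The key steps, in order, are: (1) Recall from Proposition \ref{periodicR} that for each admissible $\bar s$ and each small $\e$ there is a $2T_\e(\bar s)$-periodic solution $\Phi_{T_\e}$ of \eqref{eq:periodic} oscillating between $-\bar s$ and $\bar s$, with $\Phi_{T_\e}(0) = -\bar s$ and $\Phi_{T_\e}'$ vanishing at the turning points. (2) Analyze $T_\e(\bar s)$ as $\bar s \to 1^-$: using the expansion \eqref{eq:J-important} for $J_\e$ together with \eqref{eq:ass-F3}, one has $J_\e(F(s)-F(\bar s)) \approx \sqrt{\tfrac{2}{\e^2 Q'(0)}}\,\sqrt{F(s)-F(\bar s)}$, and near the endpoints $\pm\bar s$ the integrand in \eqref{period} behaves like $\big(F'(\pm\bar s)(s\mp\bar s)\big)^{-1/2}$, which is integrable (this is exactly the convergence already checked in the proof of Proposition \ref{periodicR}); moreover, since $\theta \geq 2$ the integral $\int_0^1 (1-u)^{-\theta/2}\,du$ diverges, so $T_\e(\bar s) \to +\infty$ as $\bar s \to 1^-$. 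On the other hand, for $\bar s$ bounded away from $1$, $T_\e(\bar s)$ stays bounded. (3) Establish continuity (indeed monotone-type dependence, or at least continuity suffices) of $\bar s \mapsto T_\e(\bar s)$ on $(0,1)$; combined with step (2) and the intermediate value theorem, for each fixed small $\e$ there exists $\bar s = \bar s(\e) \in (0,1)$, close to $1$, with $T_\e(\bar s(\e)) = (b-a)/N$. (4) With this choice of $\bar s$, set $h_1 = a + T_\e(\bar s)/2$ and $h_i = h_{i-1} + T_\e(\bar s)$; translate the periodic solution so that it attains a zero at $h_1$ with the turning point $-\bar s$ reached at $a$ (which forces $\Phi'$ to vanish at $a$), and by $2T_\e(\bar s)$-periodicity it attains $+\bar s$ or $-\bar s$ alternately at $a + k\,T_\e(\bar s)$, in particular reaching a turning point at $b = a + N\,T_\e(\bar s)$, so that $\Phi'(b) = 0$ as well. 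The restriction to $[a,b]$ is then a $C^2$ solution of \eqref{eq:comp} with exactly the $N$ zeros \eqref{locationzeros}, oscillating between $\pm\bar s$.

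I expect the main obstacle to be step (3), the qualitative control of $T_\e(\bar s)$ as a function of $\bar s$: because $F$ is only assumed to satisfy \eqref{ipoF1}-\eqref{ipoF2} and is otherwise an arbitrary double-well potential, the period map need not be monotone, and $F'$ may even vanish on a whole subinterval of $(-1,1)$ (the set $\mathcal{Z}$ in \eqref{eq:Gamma} may be fat), which could make $T_\e$ blow up for some intermediate $\bar s$ too. The cleanest fix is to restrict attention to $\bar s$ in a one-sided neighborhood $(\sigma,1)$ of $1$ on which $F$ is strictly decreasing (such a neighborhood exists by \eqref{ipoF2}, which forces $F' < 0$ for $u \in (1-\eta,1)$); on that neighborhood the integrand in \eqref{period} is well-behaved, $T_\e$ depends continuously on $\bar s$, and $T_\e(\bar s) \to +\infty$ as $\bar s \to 1^-$ while $T_\e(\sigma^+)$ is finite, which is all the intermediate value theorem needs — and shrinking $\bar\e$ guarantees the resulting $\bar s(\e)$ indeed lands in $(\sigma,1)$ since we only need $T_\e(\bar s) = (b-a)/N$ to be large, i.e. $\bar s$ close to $1$. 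A secondary (routine) point is checking the $C^2$ matching at the junction points $a + k T_\e(\bar s)$, which follows because the turning points of $\Phi_{T_\e}$ are non-degenerate ($F'(\pm\bar s) \neq 0$) so the even reflection used to extend the solution is $C^2$ there, exactly as in the proof of Proposition \ref{periodicR}.
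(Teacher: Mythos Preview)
Your proposal is correct and follows essentially the same approach as the paper: reduce to the half-period condition $T_\e(\bar s) = (b-a)/N$, use the expansion \eqref{eq:J-important} to show $T_\e(\bar s) \to +\infty$ as $\bar s \to 1^-$ (via divergence of $\int_0^1 (1-u)^{-\theta/2}\,du$ for $\theta\geq 2$), and conclude by an intermediate value argument after restricting the periodic solution of Proposition~\ref{periodicR} to $[a,b]$. The paper additionally cites \cite{Carr-Pego} for monotonicity of the period map near $\bar s = 1$ (giving uniqueness of $\bar s$), and makes explicit that the leading order of $T_\e(\sigma)$ is $O(\e)$ rather than merely finite --- a point you need for the IVT to bite and which you gesture at in your last sentence but do not state cleanly.
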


\begin{proof}
The solution we are looking for can be constructed by shifting and modifying  properly $\Phi_{T_\e}$, the periodic solution of Proposition \ref{periodicR}. 
Hence, in order to construct a  solution $\psi$ with exactly $N$ zeroes in $[a,b]$ such that $\psi'(a)=\psi'(b)=0$, we proceed as follows: 
first of all, we define $\psi(x)= \Phi_{T_\e}(x-a)$ (recall that $\Phi_{T_\e}(0)=-\bar s$). In such a way $\psi(a)=-\bar s$, and we also have $\psi'(a)=0$.
Moreover, the first zero $h_1$ of $\psi$ is located exactly at $T_\e/2+a$, while $h_2=T_\e + h_1$, $h_3= T_\e+h_2$ and so on, leading to \eqref{locationzeros}. 
Thus, in order to have $b$  located in  the middle point after the last zero of $\psi$ (so that, consequently, $\psi'(b)=0$), 
we have to choose $\bar s$ in such a way that $b=N T_\e(\bar s)+a$, if $\e\in(0,\bar\e)$. 
In other words, we have to prove that, if $\e\in(0,\bar\e)$, for any $N \in \N$ and $a,b \in \R$ there exists $\bar s\approx1$ such that
\begin{equation*}
 	T_\e(\bar s)= \frac{b-a}{N}.
\end{equation*}
To this purpose, by putting the expansion \eqref{eq:J-important} in the definition \eqref{period}, we infer 
$$T_\e(\bar s)=\e\sqrt{\frac{Q'(0)}{2}}\int_{-\bar s}^{\bar s}\frac{ds}{\sqrt{F(s)-F(\bar s)}}+o(\e).$$
Thus, we have to find $\bar s\approx1$ such that
\begin{equation}\label{eq:inutile}\int_{-\bar s}^{\bar s}\frac{ds}{\sqrt{F(s)-F(\bar s)}}=\frac{\sqrt2(b-a)}{\e \sqrt{Q'(0)}N}+\frac{o(\e)}{\e}.
\end{equation}
Since the integral on the left hand side is a monotone function of $\bar s\approx1$ \cite{Carr-Pego} and it satisfies 
$$\lim_{\bar s \to 1^- }\int_{-\bar s}^{\bar s}\frac{ds}{\sqrt{F(s)-F(\bar s)}}= +\infty,$$
we can state that if $\e$ is sufficiently small, then there exists a unique $\bar s\approx1$ such that $T_\e(\bar s)= (b-a)/N$, and the proof is complete.
\end{proof}

\begin{rem}
We point out that in the case $\theta\in(1,2)$ we proved existence of compactons with a generic number $N\in\mathbb{N}$ of layers 
located at arbitrary positions $h_1,h_2,\dots,h_N$ in the interval $[a,b]$. 
On the other hand, in Proposition \ref{periodic:bounded}, we proved that, for $\theta\geq 2$, there exist solutions with $N$ layers, which oscillate among the values $\pm \bar s$, 
with $\bar s\approx1$ determined by the condition $T_\e(\bar{s})= (b-a)/N$, but the layers position is given by \eqref{locationzeros}, and so it is not random.  Nevertheless, such result can be proved also when $\theta \in (1,2)$, but only if $\e$ is large enough: indeed, one has
$$\lim_{\bar s \to 1^-} \int_{-\bar s}^{\bar s} \frac{ds}{\sqrt{ F(s)-F(\bar s)}} < +\infty \qquad \mbox{if} \qquad \theta <2,$$
and one can easily see that the right hand side of \eqref{eq:inutile} diverges if $\e \to 0^+$.

Finally, notice that we proved that for $\e\in(0,\bar\e)$ there exists a unique $\bar s$ (depending on $\e$) such that $T_\e(\bar s)= (b-a)/N$ and, in particular,
$\displaystyle{\lim_{\e\to0^+}}\bar s=1,$
meaning that, the more $\e$ is small, the closer $\bar s$ is to $1$. 
\end{rem}

\subsubsection*{Particular cases of potential $F$.} 
In order to give a hint of what can happens for particular choices of the potential $F$ satisfying 
\eqref{ipoF1}-\eqref{ipoF2}, we consider two specific examples.  
In the first one, $F$ is given by \eqref{F:ex} with $\theta\geq2$ and, as a consequence, 
the admissible levels of the energy that lead to periodic (bounded) solutions are $C \in \left(-\frac{1}{2\theta},0\right)$, see Figure \ref{fig3}.

\begin{figure}[t]\label{fig3}
\begin{center}
\includegraphics[width=9cm,height=5.7cm]{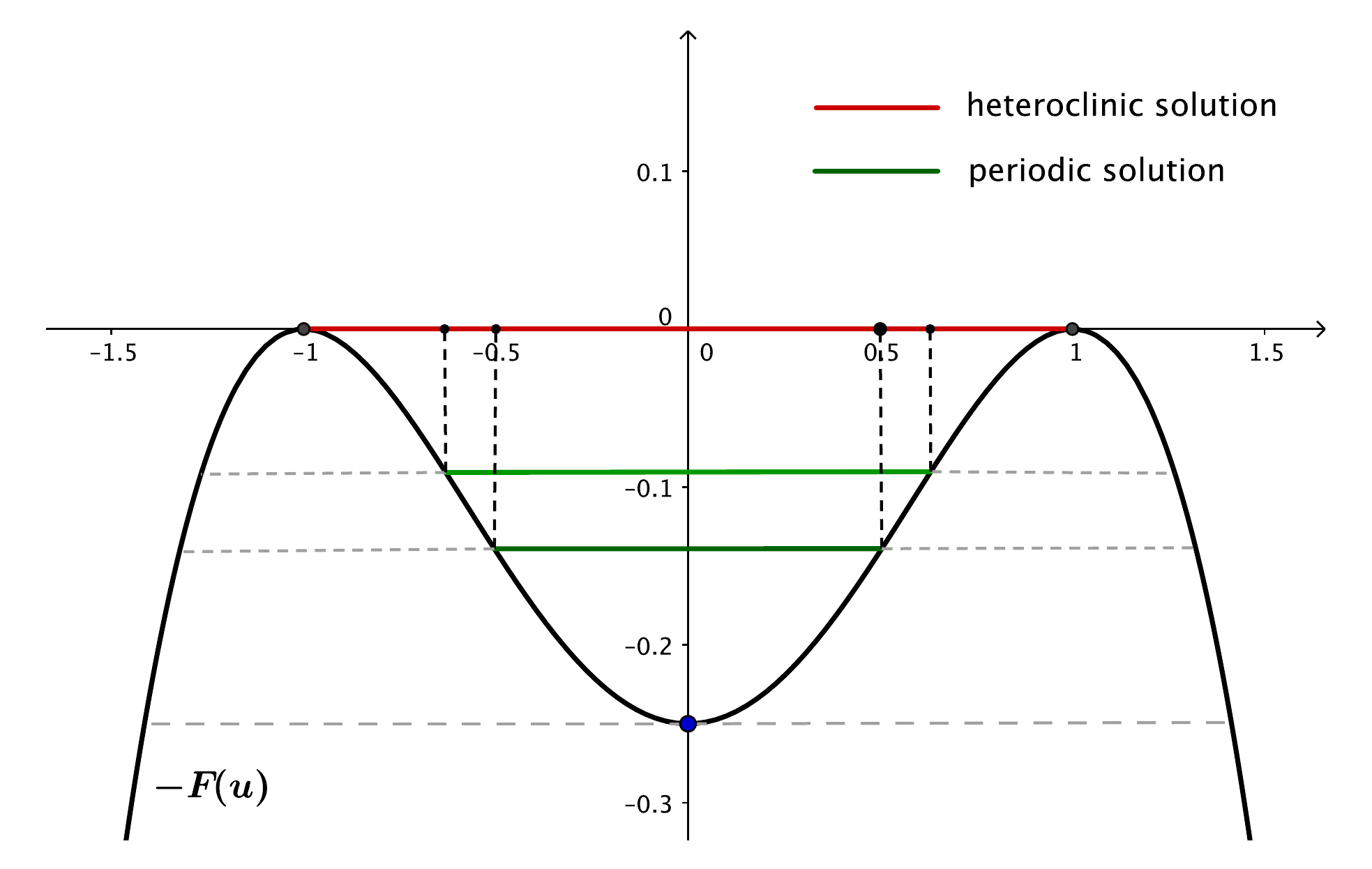}
\caption{\small{In red we depict the level $C=0$, corresponding to an heteroclinic solution; in dark green the level $C=-F(1/2)$, corresponding to the periodic solution oscillating among $\pm \frac{1}{2}$. 
Another periodic solution is depicted in green. }}
\label{fig3}
\end{center}
\end{figure}

On the other hand, let us consider a symmetric potential $F$ with a local minimum located in $u=0$ and, as a consequence, 
two local maxima located in $\pm \bar u$, for some $\bar u \in (0,1)$, see Figure \ref{fig4}, where $-F$ is depicted; in this case,
 the periodic solutions described in Proposition \ref{periodicR} appear for $C \in \left( -F(0), 0\right)$ (hence with $\Gamma=F(0)$ in \eqref{eq:Gamma}), 
 see the green line in Figure \ref{fig4}.
Moreover, if $C= -F(0)$, homoclinic solutions appear (see the blue line in Figure \ref{fig4}), 
while for $C \in \left( -F(\bar u), F(0)\right)$ one can construct new periodic solutions entirely contained either in the negative or in the positive half plane.
 Of course the case of a non-symmetric potential will be even more difficult (for instance, one will have several level of the energy corresponding to different homoclinic solutions), and this study will be the object of further investigations. 

\begin{figure}[t]\label{fig4}
\begin{center}
\includegraphics[width=11cm,height=5cm]{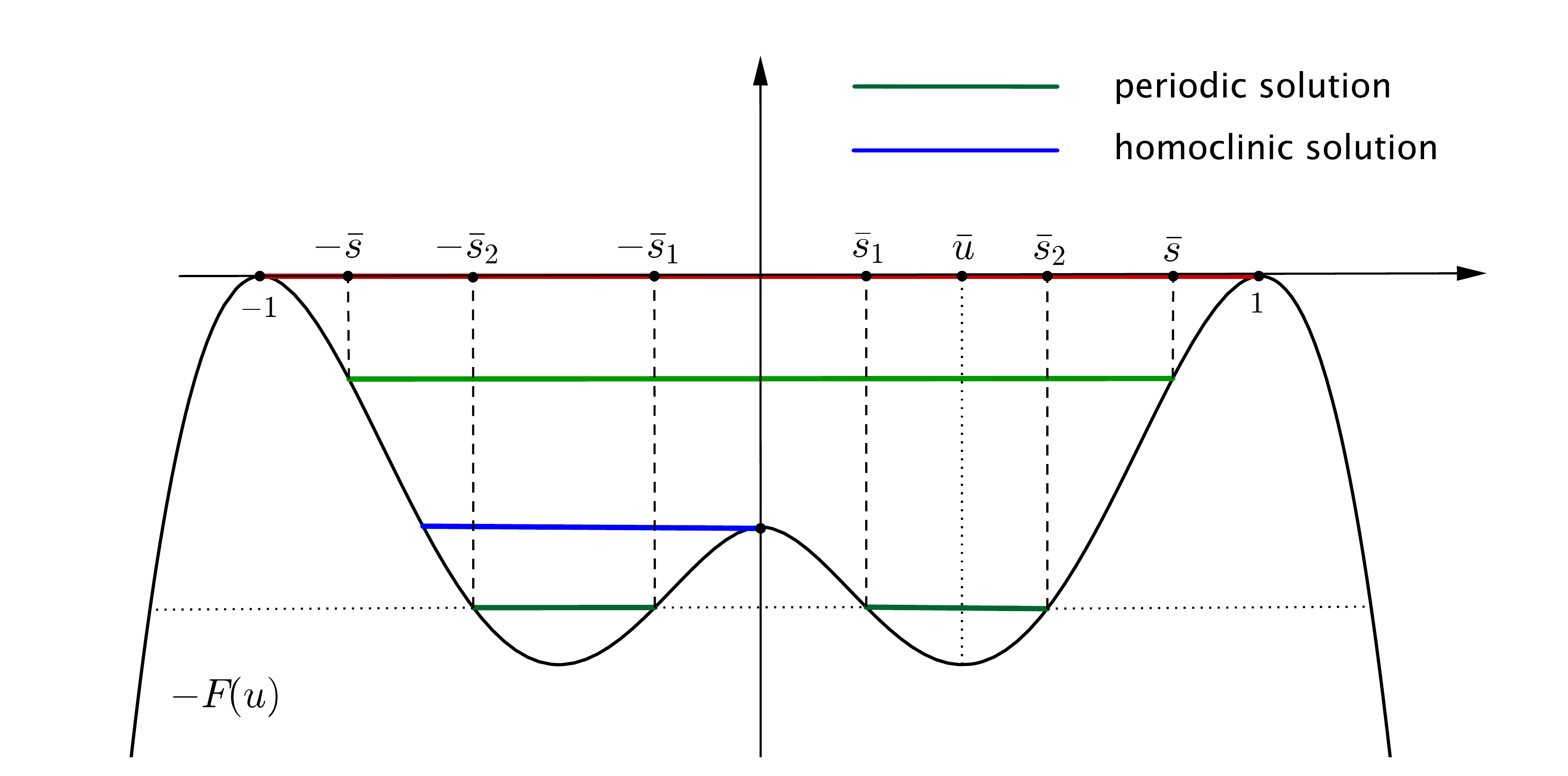}
\caption{\small{In blue we depicted the level of the energy corresponding to the homoclinic solution; in dark green the level of the energy corresponding to the periodic solutions oscillating, respectively, among $-\bar s_2, -\bar s_1 <0$ and  $\bar s_1, \bar s_2 >0$. }}
\label{fig4}
\end{center}
\end{figure}

\section{Variational results}\label{sec:energy}
In this section we collect and prove some variational results needed in order to show the slow motion phenomena of the solutions to \eqref{eq:Q-model}-\eqref{eq:Neu} in the case $\theta \geq 2$,
whose analysis will be performed in Section \ref{sec:slow}. The idea is to apply the strategy firstly developed by Bronsard and Kohn in  \cite{Bron-Kohn}, subsequently improved by Grant in \cite{Grant} and successfully used  in many other models, see for instance \cite{FPS,FPS-DCDS,FS} and references therein.

\subsection{Lyapunov functional}\label{Lyapunov}
We start by introducing the energy associated to \eqref{eq:Q-model}-\eqref{eq:Neu}
\begin{equation}\label{eq:energy}
	E_\e[u]:=\int_a^b \left[\frac{\tilde{Q}(\e^2u_x)}{\e^3}+\frac{F(u)}{\e}\right]\, dx,
\end{equation}
where $\tilde Q$ is defined in \eqref{def:ell}; we first prove that \eqref{eq:energy}  is a Lyapunov functional for the model \eqref{eq:Q-model}-\eqref{eq:Neu}, that is a functional whose time derivative is negative if  computed along the solutions to \eqref{eq:Q-model}-\eqref{eq:Neu}. 

\begin{lem}\label{lem:energy}
Let $u\in C([0,T],H^2(a,b))\cap C^1([0,T],H^1(a,b))$ be a solution of \eqref{eq:Q-model}-\eqref{eq:Neu}.
Let $E_\e$ be the functional defined in \eqref{eq:energy}. Then 
\begin{equation}\label{eq:energyestimate1}
	\frac{d}{dt}E_\e[u](t)=-\e^{-1}\int_a^b u^2_t(x,t)\, dx.
\end{equation}
and
\begin{equation}\label{eq:energyestimate}
	E_\e[u](0)-E_\e[u](T)=\e^{-1}\int_0^T\!\int_a^bu_t(x,t)^2\,dx \,dt.
\end{equation}
\end{lem}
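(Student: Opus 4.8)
The plan is to differentiate $t\mapsto E_\e[u](t)$ along a solution, integrate by parts in $x$ using the Neumann conditions \eqref{eq:Neu}, and then substitute the equation \eqref{eq:Q-model} so that the integrand collapses to $-\e^{-1}u_t^2$; the identity \eqref{eq:energyestimate} then follows by integrating \eqref{eq:energyestimate1} over $[0,T]$. Before computing, one checks that the regularity $u\in C([0,T],H^2(a,b))\cap C^1([0,T],H^1(a,b))$ is enough to differentiate under the integral sign: the embeddings $H^2(a,b)\hookrightarrow C^1([a,b])$ and $H^1(a,b)\hookrightarrow C([a,b])$ give that $u_x(\cdot,t)$ is continuous, hence (using $Q\in C^1$) that $Q(\e^2 u_x)\in C([0,T],H^1(a,b))$, while $u_{xt}\in C([0,T],L^2(a,b))$ and $F'(u)$ is bounded on the compact range of $u$; thus all the products below lie in $C([0,T],L^1(a,b))$ and the pointwise boundary evaluations are meaningful.

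Carrying out the computation, recalling that $\tilde Q'=Q$ by \eqref{def:ell}, I would first write
\[
	\frac{d}{dt}E_\e[u](t)=\int_a^b\left[\e^{-1}Q(\e^2u_x)\,u_{xt}+\e^{-1}F'(u)\,u_t\right]dx,
\]
and then integrate the first term by parts in $x$:
\[
	\int_a^b Q(\e^2u_x)\,u_{xt}\,dx=\Big[Q(\e^2u_x)\,u_t\Big]_a^b-\int_a^b \big(Q(\e^2u_x)\big)_x\,u_t\,dx.
\]
The boundary term vanishes because \eqref{eq:Neu} gives $u_x(a,t)=u_x(b,t)=0$ and \eqref{eq:Q-ass1} gives $Q(0)=0$, so $Q(\e^2u_x)$ is zero at $x=a$ and $x=b$. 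Hence
\[
	\frac{d}{dt}E_\e[u](t)=-\e^{-1}\int_a^b\Big[\big(Q(\e^2u_x)\big)_x-F'(u)\Big]u_t\,dx,
\]
and substituting the equation $u_t=\big(Q(\e^2u_x)\big)_x-F'(u)$ into the bracket yields \eqref{eq:energyestimate1}. Since $t\mapsto E_\e[u](t)$ is then seen to be $C^1$, integrating \eqref{eq:energyestimate1} over $[0,T]$ and applying the fundamental theorem of calculus gives \eqref{eq:energyestimate}.

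The only genuinely delicate point is the justification of differentiating under the integral and of the integration by parts at this regularity level, which is exactly what the Sobolev embeddings above handle; the remaining manipulations are routine. (One could alternatively first establish the identity for smooth solutions and pass to the limit by density, but with $u_x$ continuous in $x$ the direct argument is cleaner.)
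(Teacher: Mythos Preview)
Your proof is correct and follows essentially the same approach as the paper: differentiate under the integral using $\tilde Q'=Q$, integrate by parts in $x$ killing the boundary term via \eqref{eq:Neu} and $Q(0)=0$, then substitute \eqref{eq:Q-model} and integrate in time. The only difference is that you supply a justification of the regularity needed to differentiate under the integral, which the paper omits.
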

\begin{proof}
\eqref{eq:energyestimate} directly follows from \eqref{eq:energyestimate1}: indeed once \eqref{eq:energyestimate1} is proved, an integration with respect to time over the interval $[0,T]$ yields \eqref{eq:energyestimate}. 

As for the proof of \eqref{eq:energyestimate1}, by differentiating with respect to time the energy $E_\e$, we have
\begin{equation*}
	\frac{d}{dt}E_\e[u](t)=\frac1\e\int_a^b\left[Q(\e^2 u_x)u_{xt}+F'(u)u_t\right]\,dx,
\end{equation*}
where we have used that $\tilde Q'(\e^2 u_x)=Q(\e^2 u_x)$.
Integrating by parts, exploiting the boundary conditions \eqref{eq:Neu} and that $Q(0)=0$, we deduce
\begin{equation*}
	\frac{d}{dt}E_\e[u](t)=\frac1\e\int_a^b\left[-Q(\e^2 u_x)_x+F'(u)\right]u_t\,dx.
\end{equation*}
From this, since $u$ satisfies equation \eqref{eq:Q-model}, we end up with \eqref{eq:energyestimate1}, thus completing the proof.
\end{proof}

\begin{rem}
The equality \eqref{eq:energyestimate} plays a crucial role in Section \ref{sec:slow}, where we analyze the slow motion of some solutions of \eqref{eq:Q-model}-\eqref{eq:Neu}.
Then, in our analysis we can only consider sufficiently regular solutions. 
However, it is important to notice that all the stationary solutions constructed in Section \ref{stationary} are such that the diffusion coefficient $Q'(\e^2u_x)$ is strictly positive 
(see \eqref{eq:J_e} and \eqref{eq:Fi-firstderivative}), namely the solutions are defined in the region where \eqref{eq:Q-model} is parabolic.
Similarly, all the solutions we consider in the rest of the paper satisfy the same estimates and, as a consequence, 
they have the necessary regularity to apply Lemma \ref{lem:energy}. 
\end{rem}

\subsection{Lower bounds.}
The aim of this subsection is to prove some lower bounds for the energy $E_\e$, defined in \eqref{eq:energy}, associated to a function which is sufficiently close in $L^1$-sense to a jump function with constant values $-1$ and $+1$ (we refer the reader to Definition \ref{vstruct}). 
Such variational results present a different nature  depending on either $\theta=2$ or $\theta>2$; moreover, we underline that in their proof the equation \eqref{eq:Q-model} does not come into play, unlike the result contained in Subsection \ref{Lyapunov}. 

\subsubsection{A crucial inequality} The first tool we need to prove the aforementioned lower bounds is an inequality involving the functions $Q$ and $J_\e$. 
To better understand the motivation behind such a tool,
we recall the equation which identifies the standing waves solutions, that is 
\begin{equation*}
	P_\e(\Phi'_\e)= F(\Phi_\e),
\end{equation*}
which in turn, using \eqref{defalternativa}, can be rewritten as follows 
\begin{equation}\label{uguag}
	\Phi'_\e Q(\e^2 \Phi'_\e) - \frac{1}{\e^2}\tilde Q (\e^2 \Phi'_\e)= F(\Phi_\e).
\end{equation}
We now observe that \eqref{eq:J_e} implies $\Phi'_\e=J_\e (F(\Phi_\e))$, so that substituting into \eqref{uguag}, we arrive at
\begin{equation}\label{eq:impforenergy}
	\frac{\tilde 	Q (\e^2 \Phi'_\e)}{\e^3} + \frac{F(\Phi_\e)}{\e} = \frac{\Phi'}{\e} Q(\e^2J_\e (F(\Phi_\e))).
\end{equation}
Therefore, we seek a suitable inequality such that in some sense the equality holds along the standing wave solutions. 
Inspired by the previous considerations, we state and prove the following lemma.
\begin{lem}
Let $\e,L>0$. 
If $Q\in C^1(\R)$ satisfies \eqref{eq:Q-ass1}-\eqref{eq:Q-ass2} and $J_\e$ satisfies \eqref{eq:J_e}, then
\begin{equation}\label{eq:strangeineq+}
	\frac{\tilde Q (\e^2 x)}{\e^3}+\frac{y}{\e}\geq \frac{|x|}{\e} \, Q(\e^2J_\e (y)),
\end{equation}
for any $(x,y)\in[-\kappa \e^{-2}, \kappa \e^{-2}]\times[0,\ell \e^{-2}]$, where $\kappa$ and $\ell$ are defined in \eqref{eq:Q-ass2} and \eqref{def:ell}, respectively.
\end{lem}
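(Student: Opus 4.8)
The plan is to reduce the inequality \eqref{eq:strangeineq+} to a one-variable optimization problem by freezing $y$ and treating the left-hand side as a function of $x$. Concretely, for fixed $y\in[0,\ell\e^{-2}]$ define
\begin{equation*}
	g_y(x):=\frac{\tilde Q(\e^2 x)}{\e^3}+\frac{y}{\e}-\frac{|x|}{\e}\,Q(\e^2 J_\e(y)),\qquad x\in[-\kappa\e^{-2},\kappa\e^{-2}].
\end{equation*}
Since $\tilde Q$ is even (because $Q$ is odd by \eqref{eq:Q-ass1}), $g_y$ is even in $x$, so it suffices to prove $g_y(x)\geq0$ for $x\in[0,\kappa\e^{-2}]$, where $|x|=x$ and we may drop the absolute value. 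First I would differentiate in $x$: using $\tilde Q'=Q$,
\begin{equation*}
	g_y'(x)=\e^{-1}Q(\e^2 x)-\e^{-1}Q(\e^2 J_\e(y))=\e^{-1}\bigl[Q(\e^2 x)-Q(\e^2 J_\e(y))\bigr].
\end{equation*}
On the relevant range $\e^2 x\in[0,\kappa]$ the assumption \eqref{eq:Q-ass2} gives $Q'>0$ on $(-\kappa,\kappa)$, hence $Q$ is strictly increasing on $[0,\kappa]$; moreover $J_\e(y)\in[0,\kappa\e^{-2}]$ by Lemma~\ref{lemma:new}, so $\e^2 J_\e(y)\in[0,\kappa]$ lies in this monotonicity interval as well. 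Therefore $g_y'(x)$ is negative for $x<J_\e(y)$ and positive for $x>J_\e(y)$, so $g_y$ attains its minimum over $[0,\kappa\e^{-2}]$ exactly at the interior point $x=J_\e(y)$.

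It then remains to check that the minimum value is nonnegative, i.e. $g_y(J_\e(y))\geq0$. Here I would invoke the defining relation of $J_\e$: by \eqref{eq:J_e}, $P_\e(J_\e(y))=y$, and by the computation \eqref{defalternativa}, $P_\e(s)=sQ(\e^2 s)-\e^{-2}\tilde Q(\e^2 s)$. Substituting $s=J_\e(y)$ gives
\begin{equation*}
	y=J_\e(y)\,Q(\e^2 J_\e(y))-\e^{-2}\tilde Q(\e^2 J_\e(y)),
\end{equation*}
which is exactly the identity \eqref{eq:impforenergy} rearranged. Plugging this into $g_y$ at $x=J_\e(y)$,
\begin{equation*}
	g_y(J_\e(y))=\frac{\tilde Q(\e^2 J_\e(y))}{\e^3}+\frac{y}{\e}-\frac{J_\e(y)}{\e}Q(\e^2 J_\e(y))=0,
\end{equation*}
so the minimum is in fact zero and the inequality \eqref{eq:strangeineq+} follows, with equality precisely when $|x|=J_\e(y)$ (which is why, as noted before the lemma, equality holds along the standing wave where $\Phi_\e'=J_\e(F(\Phi_\e))$).

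I do not expect a serious obstacle here; the only points requiring care are (a) verifying that $J_\e(y)$ genuinely lies in the interior of $[0,\kappa\e^{-2}]$ so that the sign analysis of $g_y'$ is valid on the whole interval — this is guaranteed by the hypothesis $y\in[0,\ell\e^{-2}]$ together with Lemma~\ref{lemma:new} — and (b) handling the endpoint $y=0$, where $J_\e(0)=0$ and the inequality reads $\e^{-3}\tilde Q(\e^2 x)\geq 0$, which holds since $\tilde Q\geq0$ on $[0,\kappa]$ (as $Q\geq0$ there). The evenness reduction and the fact that the unconstrained critical point falls inside the constraint box are what make the argument clean; the substitution of the $P_\e$–identity at the critical point is the key computational step.
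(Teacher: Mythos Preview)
Your proof is correct. The core identity you use --- that $g_y(J_\e(y))=0$ via the relation $P_\e(J_\e(y))=y$ --- is precisely the one the paper exploits, and your reduction by evenness of $\tilde Q$ matches the paper's first step.

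Where you diverge is in the \emph{method} of showing this critical value is the global minimum. The paper treats $g(x,y)$ as a function of two variables: it computes both $g_x$ and $g_y$, identifies the curve $x=J_\e(y)$ as the locus of interior critical points, evaluates $g=0$ there, and then verifies $g\geq0$ separately on each of the four edges of the rectangle $[0,\kappa\e^{-2}]\times[0,\ell\e^{-2}]$. Your approach instead freezes $y$ and observes that, because $Q$ is strictly increasing on $[0,\kappa]$, the one-variable function $g_y$ has a single sign change in its derivative at $x=J_\e(y)$, hence a unique minimum there. This slicing argument is shorter and more transparent: it bypasses the four boundary checks entirely, since the one-dimensional minimum automatically falls inside (or on the edge of) the $x$-interval. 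The paper's two-dimensional treatment is more laborious but perhaps makes the structure of the equality case $x=J_\e(y)$ (a curve in the rectangle) more visibly geometric; your version trades that picture for brevity.
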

\begin{proof}
Since, by assumption, $\tilde Q$ is even, in order to prove \eqref{eq:strangeineq+} it is sufficient to study the sign of the function
\begin{equation*}
	g(x,y):=\tilde Q (\e^2 x)+\e^2 y -\e^2 x \, Q(\e^2 J_\e (y)),
\end{equation*}
for all $ x \in [0,\kappa \e^{-2}]$ and for all $ y \in [0,\ell\e^{-2}]$.
For any $(x,y)$ belonging to the inside of such a rectangle we have
\begin{align*}
	& g_x(x,y)=\e^2  \,  \tilde Q'(\e^2 x)-\e^2 \, Q(\e^2 _\e J(y)) = 0 \qquad \mbox{ if and only if } \qquad Q(\e^2 x)= Q(\e^2 J_\e (y)).
\end{align*}
Since $Q$ is strictly increasing in the interval $[0,\kappa \e^{-2}]$, we thus have $g_x(x,y)=0$ if and only if $x=J_\e (y)$. 
Let us now evaluate $g_y$ in such points:
\begin{align*}
	g_y(x,y) \Big|_{x= J_\e (y)}&=\e^2 -\e^2 x \, Q'(\e^2  J_\e (y)) \,  \e^2 J'_\e (y)\Big|_{x=J_\e (y)} \\
	&=\e^2 -\e^4 x \, \frac{Q'(\e^2 J_\e (y))}{P'_\e\left( J_\e (y)\right)}\Big|_{x=J_\e (y)} \\
	&= \e^2 -\frac{\e^2 x}{P_\e'(x)} \, Q'(\e^2 x) = \e^2 -\frac{\e^4 x}{\e^2 x \,  Q'(\e^2 x)} \, Q'(\e^2 x)=0,
\end{align*}
where we used \eqref{def:P} and \eqref{eq:J_e}. 
It follows that the only {\it internal} critical points of the function $g$ are given by $x=J_\e (y)$, and we have
\begin{align*}
	g(x,y) \Big|_{x= J_\e (y)} &= \tilde Q(\e^2 x) + \e^2 P_\e(x) -\e^2 x \, Q(\e^2 x) \\
	&=\tilde Q(\e^2 x)  + \e^2 \left[ x \, Q(\e^2 x) -\frac{1}{\e^2} \tilde Q(\e^2 x)\right] -\e^2 x \, Q(\e^2 x) =0.
\end{align*}
Let us now study the function $g$ on the boundary of the rectangle $[0, \kappa \e^{-2}]\times[0,\ell \e^{-2}]$, which is formed by four segments; we start with
\begin{equation*}
	g(0,y)=\tilde Q(0)+\e^2 y \geq 0 \qquad \mbox{for all} \quad  y\in[0,\ell \e^{-2}],
\end{equation*}
and
\begin{equation*}
	f(x,0)=\tilde Q(\e^2 x)-\e^2 x \, Q(\e^2J_\e (0)) =\tilde Q(\e^2 x) \geq 0  \qquad \mbox{for all} \quad  x \in[0,\kappa\e^{-2}],
\end{equation*}
where we used the fact that $J_\e(0)=0$ and $Q(0)=0$.
Next, for $y \in [0, \ell \e^{-2}]$ we consider the function
\begin{equation*}
	g_1(y):=g(\kappa \e^{-2},y)=\tilde Q (\kappa)+\e^2 y-\kappa Q(\e^2 J_\e (y)),
\end{equation*}
and we have
\begin{align*}
	g'_1(y)&=\e^2 -\kappa\e^2 Q'(\e^2 J_\e (y)) \frac{1}{P'_\e(J_\e (y))}\\
	 &=\e^2 -\frac{\kappa \e^2  Q'(\e^2 J_\e (y))}{\e^2 J_\e (y) Q'(\e^2 J_\e (y))} = \e^2 -\frac{\kappa}{J_\e (y)}.
\end{align*}
Recalling \eqref{def:ell}, we observe that the function $g_1$ is such that
\begin{equation*}
	g_1'(y) \leq 0 \quad \forall \ y \in  [0, \ell \e^{-2}] \qquad \mbox{and} \qquad \lim_{y \to 0} g_1'(y) = -\infty.
\end{equation*}
Moreover $g_1(0)= \tilde Q(\kappa) >0$ while
\begin{align*}
	g_1(\ell \e^{-2}) = g_1 \left( P_\e(\kappa \e^{-2})\right) &= \tilde Q(\kappa) + \e^2 P_\e(\kappa \e^{-2}) - \kappa Q(\kappa)  \\
	&=\tilde Q(\kappa) + \ell - \kappa Q(\kappa) \\
	&= \tilde Q(\kappa)+ \kappa Q(k) - \tilde Q(\kappa)- \kappa Q(\kappa)=0,
\end{align*}
implying that $g(\kappa \e^{-2},y) \geq 0$ for all $y \in [0, \ell \e^{-2}]$. Finally,  for $x \in [0,\kappa \e^{-2}]$ we consider
\begin{align*}
	g_2(x):=g(x,\ell \e^{-2})&=\tilde Q ( \e^2 x)+\ell- \e^2 x Q(\e^2 J_\e ( \ell \e^{-2})) \\
	&=\tilde Q ( \e^2 x) + \kappa Q(\kappa)-\tilde Q(\kappa)-\e^2 x Q(\kappa),
\end{align*}
where in the last equality we used \eqref{def:ell}-\eqref{eq:J_e}, which imply $J_\e(\ell \e^{-2}) = \kappa \e^{-2}$. 
Going further, we have
\begin{equation*}
	g_2(0)= \ell >0, \qquad \qquad 
	g_2(\kappa \e^{-2}) = \tilde Q ( \kappa)+ \kappa Q(\kappa)-\tilde Q(\kappa)-\kappa Q(\kappa)=0,
\end{equation*}
and 
\begin{align*}
	g_2'(x)= \e^2 \tilde Q' (\e^2x )-\e^2 Q(k) = \e^2 \left( Q(\e^2 x)-Q(k)\right) \leq 0,
\end{align*}
since $Q$ is increasing and $x \leq k\e^{-2}$. Hence, $g(x, \ell \e^{-2}) \geq 0$ for all $x \in [0, \kappa \e^{-2}]$.

We thus proved that $g$ is non negative on the boundary; since $g=0$ at the only internal critical points,
we have that $g$ is non negative for all $(x,y)\in [0,\kappa \e^{-2}]\times[0,\ell\e^{-2}]$, and the proof is complete.
\end{proof}

The inequality \eqref{eq:strangeineq+} is crucial because it allows us to state that if $\bar{u}$ is a monotone function
connecting the two stable points $+1$ and $-1$ and \eqref{eq:maxF} holds true, then the energy \eqref{eq:energy} satisfies
\begin{equation}\label{eq:c_eps}
	E_\e[\bar{u}]\geq\int_a^b\frac{|\bar{u}'|}\e Q\left(\e^2 J_\e(F(\bar{u}))\right)\,dx=\e^{-1}\int_{-1}^{+1}Q\left(\e^2 J_\e(F(s))\right)\,ds=:c_\e.
\end{equation}
Our next goal is to show that the positive constant $c_\e$ defined in \eqref{eq:c_eps} represents the minimum energy to have a {\it single} transition between $-1$ and $+1$; 
having this in mind, we fix once for all $N\in\mathbb{N}$ and a {\it piecewise constant function} $v$ with $N$ transitions as follows:
\begin{equation}\label{vstruct}
	\begin{aligned}
	v:[a,b]\rightarrow\{-1,+1\}\  \hbox{with $N$ jumps located at $a<h_1<h_2<\cdots<h_N<b$ and}\ r>0\\
	\hbox{such that}\ (h_i-r,h_i+r)\cap(h_j-r,h_j+r)=\emptyset \  \hbox{for}\ i\neq j\ \hbox{and}\   a\leq h_1-r,\ h_N+r\leq b.
	\end{aligned}
\end{equation}
The aforementioned lower bounds will allow us to state that if $\{ u^\e\}_{\e >0}$ is a family of functions sufficiently close to $v$ in $L^1$, then 
$$E_\e[{u^\e}] \geq N c_\e-R_{\theta,\e},$$ 
where the reminder term $R_{\theta,\e}$ goes to zero as $\e \to 0^+$ with a speed rate depending on $\theta$.

\subsubsection{Lower bound in the critical case $\theta=2$.}
Let us start by proving the lower bound in the case $\theta=2$, where the reminder term $R_{\theta,\e}$ is exponentially small as $\e\to0^+$.  
\begin{prop}\label{prop:lower}
Assume that $Q\in C^1(\R)$ satisfies \eqref{eq:Q-ass1}-\eqref{eq:Q-ass2} and that $F\in C^1(\R)$ satisfies \eqref{ipoF1}-\eqref{ipoF2} with $\theta=2$. 
Let us set 
\begin{equation}\label{eq:mathc-Q}
	\mathcal{Q}:=\max_{s\in[-\kappa,\kappa]}Q'(s),
\end{equation}
where $\kappa$ is given in \eqref{eq:Q-ass2}.
Moreover, let $v$ be as in \eqref{vstruct} and $A\in(0,r\sqrt{2\lambda_1 \mathcal{Q}^{-1}})$ with
$\lambda_1>0$ (independent on $\e$) as in \eqref{ipoF2}.
Then, there exist $\e_0,C,\delta>0$ (depending only on $Q,F,v$ and $A$) such that if $u\in H^1(a,b)$ satisfies 
\begin{equation}\label{eq:u-v}
	\|u-v\|_{{}_{L^1}}\leq\delta,
\end{equation}
then for any $\e\in(0,\e_0)$,
\begin{equation}\label{eq:lower}
	E_\varepsilon[u]\geq Nc_\e-C\exp(-A/\varepsilon),
\end{equation}
where $E_\e$ and $c_\e$ are defined in \eqref{eq:energy} and \eqref{eq:c_eps}, respectively.
\end{prop}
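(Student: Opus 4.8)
The plan is to adapt the Bronsard--Kohn localization strategy to the Perona--Malik energy, using the crucial inequality \eqref{eq:strangeineq+} as the substitute for the classical pointwise bound $\tilde Q(\e^2 u_x)/\e^3 + F(u)/\e \ge \e^{-1}|u_x|\sqrt{\dots}$. First I would choose, for each transition point $h_i$ of $v$, two values $\xi_i^- \in (h_i - r, h_i)$ and $\xi_i^+ \in (h_i, h_i + r)$ at which $u$ is close to $-1$ and $+1$ respectively (after possibly swapping signs according to the orientation of the $i$-th jump). Such points exist because the $L^1$-closeness \eqref{eq:u-v} forces $u$ to be near $\pm 1$ on a large-measure subset of each interval $(h_i-r, h_i+r)$; quantitatively, if $\delta$ is small enough then in each of the $2N$ half-intervals of length $r$ there is a point where $|u \mp 1| < \mu$ for a prescribed small $\mu>0$ (a standard pigeonhole/Chebyshev argument on $\|u-v\|_{L^1}$). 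On the complement of $\bigcup_i (\xi_i^-, \xi_i^+)$ the integrand of $E_\e$ is nonnegative (since $\tilde Q \ge 0$ and $F \ge 0$), so $E_\e[u] \ge \sum_{i=1}^N \int_{\xi_i^-}^{\xi_i^+} \big[ \tilde Q(\e^2 u_x)/\e^3 + F(u)/\e \big]\, dx$.

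Next I would bound each localized piece from below by $c_\e$ minus an exponentially small error. On a single interval $(\xi_i^-, \xi_i^+)$, I would first restrict attention to the set where $|u_x| \le \kappa \e^{-2}$: on the set where $|u_x| > \kappa \e^{-2}$ the density $\tilde Q(\e^2 u_x)/\e^3$ is bounded below by a positive constant times $\e^{-3}$, which is far larger than anything we need, so that portion only helps (one has to be a little careful here because $\tilde Q$ is bounded, being $\tilde Q(s)\to \kappa Q(\kappa)-\ell$ as... actually $\tilde Q$ is bounded since $Q\in L^1$ near... no: $Q(s)\to 0$ so $\tilde Q$ has a finite limit, hence $\tilde Q$ is bounded; still $\tilde Q(\e^2 u_x)/\e^3 \ge \tilde Q(\kappa)/\e^3$ there, which is large). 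On the good set I would apply \eqref{eq:strangeineq+} with $x = u_x$ and $y = F(u)$ — valid once $\e < \e_0$ so that $F(u) \le \ell \e^{-2}$ by \eqref{eq:maxF} — to get the density bounded below by $\e^{-1}|u_x|\, Q(\e^2 J_\e(F(u)))$. Integrating and using that $u$ goes from near $\pm1$ at $\xi_i^\mp$ to near $\mp1$ (or the reverse) at $\xi_i^\pm$, a change of variables gives $\int_{\xi_i^-}^{\xi_i^+} \e^{-1}|u_x| Q(\e^2 J_\e(F(u)))\, dx \ge \e^{-1}\big|\int_{-1+\mu}^{1-\mu} Q(\e^2 J_\e(F(s)))\, ds\big|$, which equals $c_\e$ minus the two tail integrals $\e^{-1}\int_{1-\mu}^{1} Q(\e^2 J_\e(F(s)))\,ds$ and its mirror near $-1$.

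The heart of the matter is then to show that these tail integrals are exponentially small in $\e$ when $\theta = 2$, and that the constant $A$ can be taken as claimed. Here I would use the expansion \eqref{eq:J-important}, which in the case $\theta=2$ together with \eqref{eq:ass-F3} gives $J_\e(F(s)) \sim \e^{-1}\sqrt{2F(s)/Q'(0)} \sim \e^{-1} c\, |1 \mp s|$ as $s \to \pm 1$, so that $\e^2 J_\e(F(s)) \sim \e c |1\mp s| \to 0$ and hence $Q(\e^2 J_\e(F(s))) \sim Q'(0)\,\e^2 J_\e(F(s)) \sim \e\, \sqrt{2 Q'(0) F(s)}$. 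Thus the tail integral near $s=1$ is comparable to $\int_{1-\mu}^1 \sqrt{F(s)}\, ds$, which is finite but not itself exponentially small — so the exponential smallness must instead come from choosing $\mu = \mu(\e)$ depending on $\e$. Concretely, the correct bookkeeping is to pick the points $\xi_i^\pm$ not at a fixed level $\mu$ but using the standing-wave profile: because the increasing standing wave $\Phi_\e$ of Proposition \ref{ESW}(ii) decays like $e^{-c_2 x}$, the distance (in $x$) needed for $u$ to be genuinely at distance $\mu$ from $\pm 1$ forces, via the bound $|u_x|\le \kappa\e^{-2}$ and the ODE comparison, $\mu \lesssim e^{-A/\e}$; equivalently one compares $\int_{\xi_i^-}^{\xi_i^+}$ of the PM-density with $c_\e$ via the explicit solution and Cauchy--Schwarz/Jensen as in Grant \cite{Grant}, and the mismatch is controlled by $\exp(-A/\e)$ with any $A < r\sqrt{2\lambda_1 \mathcal Q^{-1}}$, the square-root factor coming precisely from the decay rate $c_2 = \sqrt{\lambda_1/\mathcal Q}$ of $\Phi_\e$ (since near $\pm1$ the linearized equation is $\mathcal Q\, \e^2 w'' \approx \lambda_1 w$, giving rate $\sqrt{\lambda_1/(\mathcal Q \e^2)}$, i.e. $\sqrt{\lambda_1/\mathcal Q}/\e$, and the available length is $r$). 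I expect this last step — getting the sharp constant $A$ and handling the non-explicit $J_\e$ uniformly via \eqref{eq:J-important} — to be the main obstacle; the localization and the use of \eqref{eq:strangeineq+} are routine once set up. Summing the $N$ localized bounds and collecting the $2N$ tail terms into a single constant $C\exp(-A/\e)$ yields \eqref{eq:lower}.
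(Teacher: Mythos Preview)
Your localization and use of \eqref{eq:strangeineq+} are fine up to the point where you obtain, on each interval $(\xi_i^-,\xi_i^+)$, a lower bound of the form $c_\e$ minus the two tail integrals $\e^{-1}\int_{1-\mu}^1 Q(\e^2 J_\e(F(s)))\,ds$ and its mirror. You correctly realize that for a \emph{fixed} $\mu$ these tails are only $O(\mu^2)$, not exponentially small. But your proposed fix---making $\mu=\mu(\e)$ exponentially small---cannot work: the points $\xi_i^\pm$ are produced by the pigeonhole argument on $\|u-v\|_{L^1}\le\delta$, which only yields $|u(\xi_i^\pm)\mp1|<\mu$ provided $\delta<\hat r\mu$. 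Since $\delta$ must be independent of $\e$, you cannot force $\mu\lesssim e^{-A/\e}$ this way. The vague invocation of the standing wave or of an ``ODE comparison'' does not help either, because $u$ is an arbitrary $H^1$ function with no a priori bound on $u_x$ and no reason to resemble $\Phi_\e$.

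The missing idea, which is the actual content of the paper's proof, is \emph{not to discard the energy on the tail intervals} $(h_i+r_+,h_i+r)$ and $(h_i-r,h_i-r_-)$. You throw this away with the sentence ``on the complement \dots the integrand is nonnegative'', and that is exactly where the argument dies. The paper instead keeps these pieces (called $I_1,I_2$ in \eqref{eq:Pe}) and shows that each is bounded below by the corresponding tail integral minus $C\exp(-A/\e)$. This is done by a constrained variational problem on, say, $[h_i+r_+,h_i+r]$: the minimizer $z$ satisfies the Euler--Lagrange equation $\e Q'(\e^2 z')z''=\e^{-1}F'(z)$ with $z(h_i+r_+)=u(h_i+r_+)$ and $z'(h_i+r)=0$; setting $\psi=(z-1)^2$ and using $Q'\le\mathcal Q$ together with \eqref{ipoF2} (with $\theta=2$) gives the differential inequality $\psi''\ge(2\lambda_1/\mathcal{Q}\e^2)\psi$, and a maximum-principle comparison with the explicit solution of the associated linear problem forces $|z(h_i+r)-1|\le C\exp(-A/2\e)$. \emph{That} is the origin of the exponential and of the sharp constant $A<r\sqrt{2\lambda_1\mathcal{Q}^{-1}}$ (length $r-\hat r$ available, decay rate $\sqrt{2\lambda_1/\mathcal Q}/\e$). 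The residual tail from $z(h_i+r)$ to $1$ is then exponentially small by the pointwise estimate $Q(\e^2 J_\e(F(s)))\le C\e|1-s|$, so $I_1\ge\beta_\e-C\exp(-A/\e)$, and summing gives \eqref{eq:lower}.
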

\begin{proof}
Fix $u\in H^1(a,b)$ satisfying \eqref{eq:u-v} and $\e$ such that \eqref{eq:maxF} holds true.
Take $\hat r\in(0,r)$ so small that 
\begin{equation}\label{eq:nu}
	A\leq(r-\hat r)\sqrt{2\mathcal Q^{-1}\lambda_1}.
\end{equation}
Then, choose $0<\rho <\eta$ (with $\eta$ given by \eqref{ipoF2}) sufficiently small that
\begin{equation}\label{eq:forrho2}
\begin{aligned}
	\int_{1-\eta}^{1-\rho}Q(\e^2J_\e(F(s)))\,ds&>\int_{1-\rho}^{1}Q(\e^2J_\e(F(s)))\,ds,  \\
	\int_{-1+\rho}^{-1+\eta}Q(\e^2J_\e(F(s)))\,ds&> \int_{-1}^{-1+\rho}Q(\e^2J_\e(F(s)))\,ds.
	\end{aligned}
\end{equation}
Let us focus our attention on $h_i$, one of the discontinuous points of $v$ and, to fix ideas, 
let $v(h_i\pm r)=\pm1$, the other case being analogous.
We claim that assumption \eqref{eq:u-v} implies the existence of $r_+$ and $r_-$ in $(0,\hat r)$ such that 
\begin{equation}\label{2points}
	|u(h_i+r_+)-1|<\rho, \qquad \quad \mbox{ and } \qquad \quad |u(h_i-r_-)+1|<\rho.
\end{equation}
Indeed, assume by contradiction that $|u-1|\geq\rho$ throughout $(h_i,h_i+\hat r)$; then
\begin{equation*}
	\delta\geq\|u-v\|_{{}_{L^1}}\geq\int_{h_i}^{h_i+\hat r}|u-v|\,dx\geq\hat r\rho,
\end{equation*}
and this leads to a contradiction if we choose $\delta\in(0,\hat r\rho)$.
Similarly, one can prove the existence of $r_-\in(0,\hat r)$ such that $|u(h_i-r_-)+1|<\rho$.

Now, we consider the interval $(h_i-r,h_i+r)$ and claim that
\begin{equation}\label{eq:claim}
	\int_{h_i-r}^{h_i+r}\left[\frac{\tilde{Q}(\e^2u_x)}{\e^3}+\frac{F(u)}{\e}\right]\,dx\geq c_\e-\tfrac{C}N\exp(-A/\varepsilon),
\end{equation}
for some $C>0$ independent on $\e$.
Observe that from \eqref{eq:strangeineq+}, it follows that for any $a\leq c<d\leq b$,
\begin{equation}\label{eq:ineq}
	\int_c^d\left[\frac{\tilde{Q}(\e^2u_x)}{\e^3}+\frac{F(u)}{\e}\right]\,dx \geq\e^{-1}\left|\int_{u(c)}^{u(d)}Q(\e^2J_\e(F(s)))\,ds\right|.
\end{equation}
Hence, if $u(h_i+r_+)\geq1$ and $u(h_i-r_-)\leq-1$, then from \eqref{eq:ineq} we can conclude that
\begin{equation*}
	\int_{h_i-r_-}^{h_i+r_+}\left[\frac{\tilde{Q}(\e^2u_x)}{\e^3}+\frac{F(u)}{\e}\right]\,dx\geq c_\e,
\end{equation*}
which implies \eqref{eq:claim}.
On the other hand, notice that in general we have
\begin{align}
	\int_{h_i-r}^{h_i+r}\left[\frac{\tilde{Q}(\e^2u_x)}{\e^3}+\frac{F(u)}{\e}\right]\,dx & 
	\geq \int_{h_i+r_+}^{h_i+r}\left[\frac{\tilde{Q}(\e^2u_x)}{\e^3}+\frac{F(u)}{\e}\right]\,dx\notag \\ 
	& \quad + \int_{h_i-r}^{h_i-r_-}\left[\frac{\tilde{Q}(\e^2u_x)}{\e^3}+\frac{F(u)}{\e}\right]\,dx \notag \\
	& \quad +\e^{-1}\int_{-1}^{1}Q(\e^2J_\e(F(s)))\,ds\notag\\
	&\quad-\e^{-1}\int_{-1}^{u(h_i-r_-)}Q(\e^2J_\e(F(s)))\,ds \notag \\
	& \quad-\e^{-1}\int_{u(h_i+r_+)}^{1}Q(\e^2J_\e(F(s)))\,ds\notag \\
	&=:I_1+I_2+c_\e-\alpha_\e-\beta_\e, \label{eq:Pe}
\end{align}
where we again used \eqref{eq:ineq}.
Let us estimate the first two terms of \eqref{eq:Pe}. 
Regarding $I_1$, assume that $1-\rho<u(h_i+r_+)<1$ and consider the unique minimizer $z:[h_i+r_+,h_i+r]\rightarrow\R$ 
of $I_1$ subject to the boundary condition $z(h_i+r_+)=u(h_i+r_+)$.
If the range of $z$ is not contained in the interval $(1-\eta,1+\eta)$, then from \eqref{eq:ineq}, it follows that
\begin{equation}\label{E>fi}
	\int_{h_i+r_+}^{h_i+r}\left[\frac{\tilde{Q}(\e^2z')}{\e^3}+\frac{F(z)}{\e}\right]\,dx>\e^{-1}\int_{u(h_i+r_+)}^{1}Q(\e^2J_\e(F(s)))\,ds=\beta_\e,
\end{equation}
by the choice of $r_+$ and $\rho$, see \eqref{eq:forrho2}. 
Suppose, on the other hand, that the range of $z$ is contained in the interval $(1-\eta,1+\eta)$. 
Then, the Euler-Lagrange equation for $z$ is
\begin{align*}
	&\e Q'(\e^2z'(x))z''(x)=\e^{-1}F'(z(x)), \quad \qquad x\in(h_i+r_+,h_i+r),\\
	&z(h_i+r_+)=u(h_i+r_+), \quad \qquad z'(h_i+r)=0.
\end{align*}
Denoting by $\psi(x):=(z(x)-1)^2$, we have $\psi'=2(z-1)z'$ and 
\begin{equation*}
	\psi''(x)=2(z(x)-1)z''(x)+2z'(x)^2\geq\frac{2}{\mathcal{Q}\varepsilon^2}(z(x)-1)F'(z(x)),
\end{equation*}
where we used the fact that $\e^2|z'|\leq\kappa$ (see \eqref{eq:Fi-firstderivative}) and \eqref{eq:mathc-Q}.
Since $|z(x)-1|\leq\eta$ for any $x\in[h_i+r_+,h_i+r]$, using \eqref{ipoF2} with $\theta=2$, we obtain
\begin{equation*}
	\psi''(x)\geq\frac{2\lambda_1}{\mathcal{Q}\varepsilon^2}(z(x)-1)^2\geq \frac{\mu^2}{\varepsilon^2}\psi(x),
\end{equation*}
where $\mu=A/(r-\hat r)$ and we used \eqref{eq:nu}. 
Thus, $\psi$ satisfies
\begin{align*}
	\psi''(x)-\frac{\mu^2}{\varepsilon^2}\psi(x)\geq0, \quad \qquad x\in(h_i+r_+,h_i+r),\\
	\psi(h_i+r_+)=(u(h_i+r_+)-1)^2, \quad \qquad \psi'(h_i+r)=0.
\end{align*}
We compare $\psi$ with the solution $\hat \psi$ of
\begin{align*}
	\hat\psi''(x)-\frac{\mu^2}{\varepsilon^2}\hat\psi(x)=0, \quad \qquad x\in(h_i+r_+,h_i+r),\\
	\hat\psi(h_i+r_+)=(u(h_i+r_+)-1)^2, \quad \qquad \hat\psi'(h_i+r)=0,
\end{align*}
which can be explicitly calculated to be
\begin{equation*}
	\hat\psi(x)=\frac{(u(h_i+r_+)-1)^2}{\cosh\left[\frac\mu\varepsilon(r-r_+)\right]}\cosh\left[\frac\mu\varepsilon(x-(h_i+r))\right].
\end{equation*}
By the maximum principle, $\psi(x)\leq\hat\psi(x)$ so, in particular,
\begin{equation*}
	\psi(h_i+r)\leq\frac{(u(h_i+r_+)-1)^2}{\cosh\left[\frac\mu\varepsilon(r-r_+)\right]}\leq2\exp(-A/\varepsilon)(u(h_i+r_+)-1)^2.
\end{equation*}
Then, we have 
\begin{equation}\label{|z-v+|<exp}
	|z(h_i+r)-1|\leq\sqrt2\exp(-A/2\varepsilon)\rho.
\end{equation}
Thanks to the expansion
\begin{equation}\label{eq:expQ}
	Q(s)=Q'(0)s+o(s^2),
\end{equation}
and \eqref{eq:ass-F3}-\eqref{eq:J-important}, we can choose $\e>0$ small enough that 
\begin{equation}\label{eq:crucial}
	\left|Q(\e^2J_\e(F(s)))\right|\leq C\e^2 J_\e(F(s))\leq C\e\sqrt{F(s)}\leq C\e|1-s|,
\end{equation}
for any $s\in[z(h_i+r),1]$; as a consequence, \eqref{|z-v+|<exp} yields
\begin{equation}\label{fi<exp}
	\e^{-1}\left|\int_{z(h_i+r)}^{1}Q(\e^2J_\e(F(s)))\,ds\right|\leq C\exp(-A/\varepsilon). 
\end{equation}
From \eqref{eq:ineq}-\eqref{fi<exp} it follows that, for some constant $C>0$, 
\begin{align}
	\int_{h_i+r_+}^{h_i+r}\left[\frac{\tilde{Q}(\e^2z')}{\e^3}+\frac{F(z)}{\e}\right]\,dx &\geq\e^{-1}\left|\int_{z(h_i+r_+)}^{1}Q(\e^2J_\e(F(s)))\,ds\,-\right.\nonumber \\
	&\qquad \qquad\left.\int_{z(h_i+r)}^{1}Q(\e^2J_\e(F(s)))\,ds\right| \nonumber\\
	& \geq\beta_\e-\tfrac{C}{2N}\exp(-A/\varepsilon). \label{E>fi-exp}
\end{align}
Combining \eqref{E>fi} and \eqref{E>fi-exp}, we get that the constrained minimizer $z$ of the proposed variational problem satisfies
\begin{equation*}	
	\int_{h_i+r_+}^{h_i+r}\left[\frac{\tilde{Q}(\e^2z')}{\e^3}+\frac{F(z)}{\e}\right]\,dx \geq\beta_\e-\tfrac{C}{2N}\exp(-A/\varepsilon).
\end{equation*}
The restriction of $u$ to $[h_i+r_+,h_i+r]$ is an admissible function, so it must satisfy the same estimate and we have
\begin{equation}\label{eq:I1}
	I_1\geq\beta_\e-\tfrac{C}{2N}\exp(-A/\varepsilon).
\end{equation}
The term $I_2$ on the right hand side of \eqref{eq:Pe} is estimated similarly by analyzing  the interval $[h_i-r,h_i-r_-]$ 
and using the second condition of \eqref{eq:forrho2} to obtain the corresponding inequality \eqref{E>fi}.
The obtained lower bound reads:
\begin{equation}\label{eq:I2}	
	I_2\geq\alpha_\e-\tfrac{C}{2N}\exp(-A/\varepsilon).
\end{equation}
Finally, by substituting \eqref{eq:I1} and \eqref{eq:I2} in \eqref{eq:Pe}, we deduce \eqref{eq:claim}.
Summing up all of these estimates for $i=1, \dots, N$, namely for all transition points, we end up with
\begin{equation*}
	E_\varepsilon[u]\geq\sum_{i=1}^N\int_{h_i-r}^{h_i+r}\left[\frac{\tilde{Q}(\e^2u_x)}{\e^3}+\frac{F(u)}{\e}\right]\,dx\geq Nc_\e-C\exp(-A/\varepsilon),
\end{equation*}
and the proof is complete.
\end{proof}

\subsubsection{Lower bound in the supercritical case $\theta>2$.}
We now deal with the case $\theta>2$, where we have a {\it weaker} lower bound for the energy, which is stated and proved in the following proposition.
\begin{prop}\label{prop:lower_deg}
Assume that $Q\in C^1(\R)$ satisfies \eqref{eq:Q-ass1}-\eqref{eq:Q-ass2} and that $F\in C^1(\R)$ satisfies \eqref{ipoF1}-\eqref{ipoF2} with $\theta>2$. 
Let $v$ as in \eqref{vstruct} and define the sequence
\begin{equation}\label{eq:exp_alg}
	k_j:=\sum_{m=1}^j\alpha^m,
	\qquad\qquad  \mbox{ where }  \quad \alpha:=\displaystyle\frac{1}{2}+\frac{1}{\theta}.
\end{equation}
Then, for any $j\in \mathbb N$ there exist constants $\delta_j>0$ and $C>0$ such that if $u\in H^1(a,b)$ satisfies
\begin{equation}\label{|w-v|_L^1<delta_l}
	\|u-v\|_{L^1}\leq\delta_j,
\end{equation}
and 
\begin{equation}\label{E_p(w)<Nc0+eps^l}
	E_\e[u]\leq Nc_\e+C\varepsilon^{k_{j}},
\end{equation}
with $\varepsilon$ sufficiently small, then
\begin{equation}\label{E_p(w)>Nc0-eps^l}
	E_\e[u]\geq Nc_\e-C_j\varepsilon^{k_{j+1}}.
\end{equation}
\end{prop}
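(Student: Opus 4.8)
\emph{Strategy.} The plan is to argue by induction on $j$, following the Bronsard--Kohn localization already employed in the proof of Proposition \ref{prop:lower}, but replacing the linear comparison ODE used there for $\theta=2$ (which produced exponential decay and hence an exponentially small remainder) by a \emph{nonlinear} comparison ODE that only yields algebraic decay; the successive powers $k_j$ in \eqref{eq:exp_alg} will then arise from bootstrapping this algebraic estimate.

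\emph{The base case.} As in Proposition \ref{prop:lower}, the hypothesis \eqref{|w-v|_L^1<delta_l} forces, near each transition point $h_i$ of $v$, points $h_i\pm r_\pm$ (with $r_\pm$ in a small fixed neighbourhood) at which $u$ lies within a fixed $\rho<\eta$ of the corresponding value $\pm1$. Applying the crucial inequality \eqref{eq:strangeineq+} in the form \eqref{eq:ineq} and splitting the local energy over $(h_i-r,h_i+r)$ exactly as in \eqref{eq:Pe}, one is reduced to bounding from below the energy of the constrained minimizer $z$ on $[h_i+r_+,h_i+r]$ with $z(h_i+r_+)=u(h_i+r_+)$ and $z'(h_i+r)=0$ (the alternative in which the range of $z$ leaves $(1-\eta,1+\eta)$ being dismissed, as in Proposition \ref{prop:lower}, by the choice \eqref{eq:forrho2}, and the minimization being understood over the class $\e^2|w'|\le\kappa$ where the functional is convex in the gradient). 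Here I would set $\psi:=(1-z)^2$; using the Euler--Lagrange equation for $z$, the bound $\e^2|z'|\le\kappa$ (cf. \eqref{eq:Fi-firstderivative}), the constant \eqref{eq:mathc-Q}, and the lower bound in \eqref{ipoF2} with $\theta>2$, one obtains the differential inequality
\[
\psi''(x)\ge\frac{2\lambda_1}{\mathcal{Q}\,\e^{2}}\,\psi(x)^{\theta/2},\qquad \psi'(h_i+r)=0,
\]
in place of the linear one appearing for $\theta=2$. Comparing $\psi$ with the solution $\hat\psi$ of the corresponding equality with the same boundary data (whose first integral is elementary) and integrating over the interval of fixed positive length $r-r_+$ yields an algebraic bound $|1-z(h_i+r)|\le C\e^{\beta}$ for some $\beta=\beta(\theta)>0$; inserting it into the tail integral via the analogue of \eqref{eq:crucial} (namely $|Q(\e^2J_\e(F(s)))|\le C\e\sqrt{F(s)}\le C\e|1-s|^{\theta/2}$ near $s=1$, from \eqref{eq:J-important} and \eqref{eq:ass-F3}), and arguing symmetrically on $[h_i-r,h_i-r_-]$, gives \eqref{E_p(w)>Nc0-eps^l} for $j=1$.

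\emph{The inductive step.} Assume the statement holds for $j$. If $u$ satisfies \eqref{|w-v|_L^1<delta_l}--\eqref{E_p(w)<Nc0+eps^l} at level $j+1$, then since $k_{j+1}>k_j$ and $\delta_{j+1}\le\delta_j$ the inductive hypothesis applies and gives $E_\e[u]\ge Nc_\e-C_j\e^{k_{j+1}}$; combined with \eqref{E_p(w)<Nc0+eps^l} this is a \emph{two-sided} control $|E_\e[u]-Nc_\e|\le C\e^{k_{j+1}}$. Distributing among the $N$ transitions — using the per-layer lower bounds produced in the localization step — the energy localized in each $(h_i-r,h_i+r)$ equals $c_\e+O(\e^{k_{j+1}})$. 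By \eqref{eq:ineq} this forces the boundary data $u(h_i\pm r_\pm)$ (more precisely, the corresponding constrained minimizers) to be within a power $\e^{\sigma_j}$ of $\pm1$, with $\sigma_j$ linked to $k_{j+1}$ through the relation ``tail $\approx$ (layer precision)$^{\theta/2+1}$'' and the identity $\tfrac\theta2+1=\alpha\theta$, $\alpha=\tfrac12+\tfrac1\theta$. Re-running the nonlinear comparison above with the improved Dirichlet datum $\psi_0=(1-u(h_i+r_+))^2=O(\e^{2\sigma_j})$ propagates this to $|1-z(h_i+r)|=O(\e^{\sigma_{j+1}})$, and feeding the latter back into the tail of $\int Q(\e^2J_\e(F(s)))\,ds$ produces the sharpened remainder $\e^{k_{j+2}}$. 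The whole computation then reduces to verifying the recursion $k_{j+1}=\alpha(1+k_j)$, equivalently $k_{j+1}-k_j=\alpha^{j+1}$, which is precisely \eqref{eq:exp_alg}; the constants $\delta_j$ are chosen decreasing in $j$ so that the localization step remains available at every stage.

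\emph{Main obstacle.} I expect the difficulty to lie entirely in this exponent bookkeeping: one must quantify carefully how the energy budget $\e^{k_{j+1}}$ constrains the layer values, how that constraint is transported through the superlinear inequality $\psi''\ge c\,\e^{-2}\psi^{\theta/2}$ over an interval of fixed length, and how it converts back into an energy remainder through the behaviour of $Q(\e^2J_\e(F(s)))$ near $s=\pm1$ — all the while $J_\e$ is not explicit and must be handled exclusively through the expansion \eqref{eq:J-important} (cf. Remark \ref{rem:difficile}). A secondary technical point is to check at each stage that the constrained minimizer stays in $(1-\eta,1+\eta)$ and satisfies $\e^2|z'|\le\kappa$, so that the differential inequality — and with it the maximum principle comparison — is legitimate.
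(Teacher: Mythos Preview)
Your approach is \emph{genuinely different} from the paper's and, if executed, would actually yield a sharper bound — but the inductive bootstrap you sketch is both unnecessary and based on a misreading of what the nonlinear comparison delivers.

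\textbf{What the paper does.} The paper abandons the constrained-minimizer/ODE route of Proposition~\ref{prop:lower} entirely. It argues by a pigeonhole/averaging mechanism that uses the \emph{upper} energy bound \eqref{E_p(w)<Nc0+eps^l} directly: since $\tilde Q\ge0$, the assumption $E_\e[u]\le Nc_\e+C\e^{k_j}$ forces $\int F(u)/\e$ to be small on a set of positive measure near each transition, hence there is a point $y_j$ with $F(u(y_j))\le C\e^{k_j+1}$, i.e.\ $|u(y_j)-1|\le C\e^{(k_j+1)/\theta}$. Feeding this into the tail of $\e^{-1}\int Q(\e^2J_\e(F(s)))\,ds$ via \eqref{eq:crucial} produces the remainder $\e^{\alpha(k_j+1)}=\e^{k_{j+1}}$, and this is exactly the recursion \eqref{eq:exp_alg}. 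No minimizer, no ODE, no maximum principle.

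\textbf{What your approach actually gives.} Your nonlinear comparison $\psi''\ge c\e^{-2}\psi^{\theta/2}$ on an interval of fixed length $L$ with $\psi(0)=\rho^2$ fixed and $\psi'(L)=0$ is correct, and the maximum-principle comparison with the equality solution $\hat\psi$ is valid. But solve the BVP: the first integral and a scaling $t=\psi/m$, $m:=\hat\psi(L)$, give $m^{-(\theta-2)/4}\!\int_1^{\rho^2/m}\!(t^{(\theta+2)/2}-1)^{-1/2}\,dt=C L/\e$. For $\theta>2$ the integral converges as the upper limit tends to $\infty$, so $m\sim C\e^{4/(\theta-2)}$ \emph{regardless of} $\rho$. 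Thus $|1-z(h_i+r)|\le C\e^{2/(\theta-2)}$, and the tail estimate yields a remainder $C\e^{(\theta+2)/(\theta-2)}=C\e^{\beta}$ — the limiting exponent of \eqref{eq:beta} — already at the base step. This alone proves \eqref{E_p(w)>Nc0-eps^l} for every $j$, without using \eqref{E_p(w)<Nc0+eps^l} at all.

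\textbf{Where your plan goes wrong.} Your inductive step rests on the idea that shrinking the Dirichlet datum $\psi_0$ to $O(\e^{2\sigma_j})$ pushes $|1-z(L)|$ to a higher power. It does not: the computation above shows the ODE comparison saturates at $\e^{2/(\theta-2)}$ once $\psi_0$ is larger than that scale, and merely returns $\psi(L)\lesssim\psi_0$ once $\psi_0$ is smaller. So the bootstrap mechanism you describe does not generate the recursion $k_{j+1}=\alpha(1+k_j)$; that recursion is the signature of the paper's averaging argument, where the gain per step comes from converting an energy budget $\e^{k_j}$ into a pointwise bound on $F(u)$ of order $\e^{k_j+1}$.

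In short: your base step is a valid (and stronger) proof of the proposition, but the induction you propose is a misconception; the paper's proof is more elementary, uses the upper bound hypothesis essentially, and is the natural source of the exponents $k_j$.
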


\begin{proof}
We prove our statement by induction on $j\geq1$.
Let us begin by considering the case of only one transition $N=1$ and
let $h_1$ be the only point of discontinuity of $v$ and assume, without loss of generality, that $v=-1$ on $(a,h_1)$. 
Also, we choose $\delta_j$ small enough such that
\begin{equation*}
	(h_1-2j\delta_j,h_1+2j\delta_j)\subset(a,b).
\end{equation*}
Our goal is to show that for any $j\in\mathbb N$ there exist $x_j\in(h_1- 2j\delta_j,h_1)$ and $y_j\in(h_1, h_{1}+2j\delta_j)$ such that
\begin{equation}\label{x_k,y_k}
	u(x_j)\leq-1+C\varepsilon^\frac{k_j+1}{\theta}, \qquad u(y_j)\geq 1-C\varepsilon^\frac{k_j+1}{\theta},
\end{equation}
and
\begin{equation}\label{E_p(w)-xk,yk}
	\int_{x_j}^{y_j}\left[\frac{\tilde{Q}(\e^2u_x)}{\e^3}+\frac{F(u)}{\e}\right]\geq c_\e-C\varepsilon^{k_{j+1}},
\end{equation}
where $\{k_j\}_{{}_{j\geq1}}$ is defined in \eqref{eq:exp_alg}.
We start with the base case $j=1$, and we show that hypotheses \eqref{|w-v|_L^1<delta_l} and \eqref{E_p(w)<Nc0+eps^l} imply the existence of two points 
$x_1{\in(h_1-2\delta_j,h_1)}$ and $y_1\in(h_1,h_1+2\delta_j)$ such that
\begin{equation}\label{x_1,y_1}
	u(x_1)\leq-1+C\varepsilon^\frac1{\theta}, \qquad u(y_1)\geq1-C\varepsilon^\frac1{\theta}.
\end{equation}
Here and throughout, $C$ represents a positive constant that is independent of $\varepsilon$,
whose value may change from line to line. 
From hypothesis \eqref{|w-v|_L^1<delta_l}, we have
\begin{equation}\label{int su (gamma,b)}
	\int_{h_1}^b|u-1|\leq\delta_j,
\end{equation}
so that, denoting by $S^-:=\{y:u(y)\leq0\}$ and by $S^+:=\{y:u(y)>0\}$, 
 \eqref{int su (gamma,b)} yields
\begin{equation*}
\begin{aligned}
\textrm{meas}(S^-\cap(h_1,b))\leq\delta_j \qquad \mbox{and} \qquad
	\textrm{meas}(S^+\cap(h_1,h_1+2\delta_j))\geq\delta_j.
	\end{aligned}
\end{equation*}
Furthermore, from \eqref{E_p(w)<Nc0+eps^l}  with $j=1$, we obtain
\begin{equation*}
	\int_{S^+\cap(h_1,h_1+2\delta_j)}\frac{F(u)}\varepsilon\, dx\leq c_\e+C\e^\alpha,
\end{equation*}
and therefore there exists $y_1\in S^+\cap(h_1,h_1+2\delta_j)$ such that
\begin{equation*}
	F(u(y_1))\leq \frac{c_\e+C\e^\alpha}{\delta_j}\varepsilon.
\end{equation*}
Since $F$ vanishes only at $\pm1$ and $u(y_1)>0$ we can choose $\e$ so small that the latter condition implies $|u(y_1)-1|<\eta$;
hence, from \eqref{eq:ass-F3}, it follows that $u(y_1)\geq 1-C\e^{\frac1{\theta}}$.
The existence of $x_1{\in S^-\cap(h_1-2\delta_j,h_1)}$ such that $u(x_1)\leq-1+C\varepsilon^\frac1{\theta}$ can be proved similarly.

Now, let us prove that \eqref{x_1,y_1} implies \eqref{E_p(w)-xk,yk} in the case $j=1$,
and as a trivial consequence we obtain the statement \eqref{E_p(w)>Nc0-eps^l} with $j=1$ and $N=1$.
Indeed, by using \eqref{eq:strangeineq+} and \eqref{x_1,y_1} one deduces 
\begin{equation*}
	\begin{aligned}
		E_\e[u]&\geq\int_{x_1}^{y_1}\left[\frac{\tilde{Q}(\e^2u_x)}{\e^3}+\frac{F(u)}{\e}\right]
		\geq\e^{-1}\int_{u(x_1)}^{u(y_1)} Q(\e^2J_\e(F(s)))\,ds\\
		&\geq c_\e-\e^{-1}\int_{1-C\varepsilon^\frac1{\theta}}^1 Q(\e^2J_\e(F(s)))\,ds-\e^{-1}\int_{-1}^{-1+C\varepsilon^\frac1{\theta}}Q(\e^2J_\e(F(s)))\,ds.
	\end{aligned}
\end{equation*}
Reasoning as in \eqref{eq:crucial} and using \eqref{eq:ass-F3}, we infer
\begin{align*}
	\left|\int_{1-C\varepsilon^\frac1{\theta}}^1 Q(\e^2J_\e(F(s)))\,ds\right|&\leq 
	C\e\int_{1-C\varepsilon^\frac1{\theta}}^1(1-s)^{\frac\theta2}\leq C\varepsilon^{\frac32+\frac1\theta},\\
	\left|\int_{-1}^{-1+C\varepsilon^\frac1{\theta}} Q(\e^2J_\e(F(s)))\,ds\right|&\leq 
	C\e\int_{-1}^{-1+C\varepsilon^\frac1{\theta}}(s+1)^{\frac\theta2}\leq C\varepsilon^{\frac32+\frac1\theta},
\end{align*}
and, as a trivial consequence, 
\begin{equation}\label{stima-l=1}
	E_\e[u]\geq c_\e-C\e^\alpha,
\end{equation}
where $\alpha$ is defined in \eqref{eq:exp_alg}.
This concludes the proof in the case $j=1$ with one transition $N=1$.

We now enter the core of the induction argument, proving that if \eqref{E_p(w)-xk,yk} holds true for for any $i\in\{1,\dots,j-1\}$, $j\geq2$, then \eqref{x_k,y_k} holds true.
By using \eqref{|w-v|_L^1<delta_l} we have
\begin{equation}\label{meas>delta_l-k}
	\textrm{meas}(S^+\cap(y_{j-1},y_{j-1}+2\delta_j))\geq\delta_j.
\end{equation}
Furthermore, by using  \eqref{E_p(w)<Nc0+eps^l} and \eqref{E_p(w)-xk,yk} in the case $j-1$, we deduce
\begin{equation*}
	\int_{y_{j-1}}^b\frac{F(u)}\varepsilon dx\leq C\varepsilon^{k_j},
\end{equation*}
implying
\begin{equation}\label{int F(w)<Ceps^k+1}
	\int_{S^+\cap(y_{j-1},y_{j-1}+2\delta_j)} F(u)\,dx\leq C\varepsilon^{k_j+1}.
\end{equation}
Finally, from  \eqref{meas>delta_l-k} and \eqref{int F(w)<Ceps^k+1} there exists $y_{j}\in S^+\cap(y_{j-1},y_{j-1}+2\delta_j)$ such that
\begin{equation*}
	F(u(y_{j}))\leq \frac{C}{\delta_j}\varepsilon^{k_j+1},
\end{equation*}
and, as a consequence, we have the existence of $y_{j}\in(y_{j-1},y_{j-1}+2\delta_j)$ as in \eqref{x_k,y_k}. 
The existence of $x_{j}\in(x_{j-1}-2\delta_j,x_{j-1})$ can be proved similarly. 

Proceeding as done to obtain \eqref{stima-l=1}, one can easily check that \eqref{x_k,y_k} implies
\begin{equation*}
	\int_{x_{j}}^{y_{j}}\left[\frac{\tilde{Q}(\e^2u_x)}{\e^3}+\frac{F(u)}{\e}\right]\,dx\geq c_\e-C\varepsilon^{\alpha(k_j+1)}.
\end{equation*}
Since the definition \eqref{eq:exp_alg} implies $\alpha(k_j+1)=k_{j+1}$, the induction argument is completed, as well as the proof in case $N=1$.

The previous argument can be easily adapted to the case $N>1$. 
Let $v$ be as in \eqref{vstruct}, and set $a=h_0, h_{N+1}=b$. 
We argue as in the case $N=1$ in each point of discontinuity $h_i$, by choosing
the constant $\delta_j$ so that 
$$
	{h_i}+2j\delta_j<h_{i+1}-2j\delta_j,\qquad \quad 0\leq i\leq N,
$$
and by assuming, without loss of generality, that $v=-1$ on $(a,h_1)$. 
Proceeding as in \eqref{x_1,y_1}, one can obtain the existence of $x^i_1\in(h_i-2\delta_j,h_i)$ and $y^i_1\in(h_i,h_i+2\delta_j)$ such that
\begin{align*}
	u(x^i_1)&\approx (-1)^i, &u(y^i_1)&\approx(-1)^{i+1},\\
	F(u(x^i_1))&\leq C\varepsilon, &F(u(y^i_1))&\leq C\varepsilon.
\end{align*}
On each interval $(x_1^i,y_1^i)$ we bound from below $E_\e$  as in \eqref{stima-l=1}, so that by summing  one obtains
$$
	\sum_{i=1}^N\int_{x_1^i}^{y_1^i}\left[\frac{\tilde{Q}(\e^2u_x)}{\e^3}+\frac{F(u)}{\e}\right]\,dx\geq Nc_\e-C\varepsilon^\alpha,
$$
that is \eqref{E_p(w)>Nc0-eps^l} with $j=1$. 
Arguing inductively as done in the case $N=1$, we obtain \eqref{E_p(w)>Nc0-eps^l} for $E_\e$ in the general case $j\geq2$.
\end{proof}

\subsubsection{Some comments on the lower bounds}
First of all, let us notice that in the proof of Proposition \ref{prop:lower_deg}, we simply took advantage of the behavior of $Q$ in a neighborhood of zero, 
and of the fact that the potential $F \in C(\R)$ satisfies $F(u)>0$ for any $u \neq \pm 1$ and \eqref{eq:ass-F3} for some $\theta>0$. 
Hence, Proposition \ref{prop:lower_deg} holds also in the case $\theta\in(0,2]$; 
moreover, since in such a case $\alpha>1$, the increasing sequence in \eqref{eq:exp_alg} is unbounded, and we can rewrite the estimate \eqref{E_p(w)>Nc0-eps^l} as
\begin{equation*}
	E_\e[u]\geq Nc_\e-C_k\varepsilon^{k}, \qquad \qquad k\in\mathbb{N},
\end{equation*}
provided that
\begin{equation*}
	E_\e[u]\leq Nc_\e+C\varepsilon^{k},\qquad \qquad k\in\mathbb{N}.
\end{equation*}
Nevertheless, if $\theta=2$, we underline that \eqref{eq:lower} provides a {\it stronger} lower bound, where the error is exponentially small rather than algebraically small. 

On the other hand, if $\theta>2$ then $\alpha\in(0,1)$, and, consequently
\begin{equation}\label{eq:beta}
	\lim_{j\to+\infty}k_j= \sum_{m=1}^{+\infty} \alpha^m =\frac{1}{1-\alpha}-1=\frac{\theta+2}{\theta-2}:=\beta.
\end{equation}
Therefore, in the case $\theta>2$ one only has the lower bounds 
\begin{equation*}
	E_\e[u]\geq Nc_\e-C_j\varepsilon^{k_j}, \qquad \mbox{for any } j\in\mathbb N,\qquad \mbox{ with } \qquad \lim_{j\to+\infty}k_j=\beta.
\end{equation*}

As a direct consequence of Propositions \ref{prop:lower}-\ref{prop:lower_deg} we have the following result, showing that if the family $\{ u_\e\}_{\e >0}$ makes $N$ transitions among $+1$ and $-1$ in an {\it energy efficient way} (see \eqref{TLS2} for the rigorous definition), then $E_\e[u^\e]$ converges, as $\e \to 0$, to the minimum energy in the case of the classical Ginzburg-Landau functional (see \cite{Bron-Kohn}).

\begin{cor}\label{cor:TLS}
Let $v$ as in \eqref{vstruct} and let $u^\e \in H^1(a,b)$ be such that 
\begin{equation}\label{TLS1}
	\lim_{\varepsilon\rightarrow 0^+} \| u^\e -v \|_{L^1}=0,
\end{equation}
and there exists a function $\nu \, : \, (0,1) \to (0,1)$ such that
\begin{equation}\label{TLS2}
	E_\e[u^\e] \leq N c_\e +\nu(\e), \qquad \mbox{with} \qquad \lim_{\e \to 0^+} \nu(\e) =0,
\end{equation}
where $c_\e$ is defined in \eqref{eq:c_eps}. Then
\begin{equation*}
\lim_{\e \to 0^+} E_\e[u^\e] = N c_0, \qquad \mbox{ with } \qquad c_0:= \lim_{\e\to0^+} c_\e=\sqrt{Q'(0)}\int_{-1}^{+1} \sqrt{2F(s)}\,ds.
\end{equation*}
\end{cor}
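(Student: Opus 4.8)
The plan is to derive the corollary directly from the two lower bound propositions together with an essentially trivial upper bound. First I would establish the upper bound: since $\lim_{\e\to0^+}\nu(\e)=0$ and, by \eqref{TLS2}, $E_\e[u^\e]\le Nc_\e+\nu(\e)$, passing to the limit gives $\limsup_{\e\to0^+}E_\e[u^\e]\le N\,\lim_{\e\to0^+}c_\e$, once we know $c_\e$ converges. That convergence is computed directly from the definition \eqref{eq:c_eps}: using the expansion $Q(s)=Q'(0)s+o(s^2)$ from \eqref{eq:expQ} and the leading-order behavior $J_\e(\xi)=\sqrt{2\xi/(\e^2 Q'(0))}+\e^{-2}\rho(\e^2\xi)$ from \eqref{eq:J-important}, one gets $Q(\e^2 J_\e(F(s)))=Q'(0)\e^2 J_\e(F(s))+o(\cdot)=\e\sqrt{2Q'(0)F(s)}+o(\e)$ uniformly for $s$ in compact subsets; dividing by $\e$ and integrating over $[-1,1]$ yields $c_\e\to\sqrt{Q'(0)}\int_{-1}^{1}\sqrt{2F(s)}\,ds=:c_0$. (The integrability near $\pm1$ of $\sqrt{F(s)}$ is guaranteed by \eqref{eq:ass-F3} together with $\theta<\infty$; one must be mildly careful that the $o(\e)$ remainder in the integrand is controlled uniformly in $s$, which is exactly the estimate already carried out in \eqref{int_SW2}--\eqref{eq:crucial}, so I would just cite those computations.)

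Next I would supply the matching lower bound, splitting into the two regimes. If $\theta=2$, hypotheses \eqref{TLS1} and \eqref{TLS2} put us in the setting of Proposition \ref{prop:lower}: for any admissible $A$ there are $\e_0,C,\delta>0$ so that $\|u^\e-v\|_{L^1}\le\delta$ (true for $\e$ small by \eqref{TLS1}) forces $E_\e[u^\e]\ge Nc_\e-C\exp(-A/\e)$ for $\e\in(0,\e_0)$. Hence $\liminf_{\e\to0^+}E_\e[u^\e]\ge N\lim_{\e\to0^+}c_\e=Nc_0$, since $\exp(-A/\e)\to0$. For the sub-quadratic range $\theta\in(1,2]$ one can invoke the remark following Proposition \ref{prop:lower_deg}, which notes that that proposition is valid for all $\theta\in(0,2]$ and that then the exponents $k_j\uparrow\infty$; applied with $j$ large enough that $k_j>1$, and noting that \eqref{TLS2} implies the structural hypothesis \eqref{E_p(w)<Nc0+eps^l} for $\e$ small (because $\nu(\e)\le C\e^{k_j}$ fails in general — here is a subtlety I flag below), one obtains $E_\e[u^\e]\ge Nc_\e-C_j\e^{k_j}$, whence again $\liminf_{\e\to0^+}E_\e[u^\e]\ge Nc_0$. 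Finally, if $\theta>2$ one uses Proposition \ref{prop:lower_deg} directly: take any fixed $j\ge1$, check that \eqref{E_p(w)<Nc0+eps^l} holds for $\e$ small, and conclude $E_\e[u^\e]\ge Nc_\e-C_j\e^{k_j}$; since $k_j>0$ this still gives $\liminf\ge Nc_0$. Combining the $\limsup\le Nc_0$ with $\liminf\ge Nc_0$ yields $\lim_{\e\to0^+}E_\e[u^\e]=Nc_0$.

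The main obstacle is reconciling the generic hypothesis \eqref{TLS2} — which only says $E_\e[u^\e]\le Nc_\e+\nu(\e)$ with $\nu$ an unspecified infinitesimal — with the structural hypothesis \eqref{E_p(w)<Nc0+eps^l} of Proposition \ref{prop:lower_deg}, which demands $E_\e[u^\e]\le Nc_\e+C\e^{k_j}$ with a specific algebraic rate. When $\nu(\e)$ tends to zero slower than any power of $\e$, the hypothesis of Proposition \ref{prop:lower_deg} is not literally available. The way around this is to observe that the lower bound we actually need is weaker: we only need $\liminf E_\e[u^\e]\ge Nc_0$, and for that it suffices to show that $E_\e[u^\e]\ge Nc_\e - o(1)$ under \eqref{TLS1} alone. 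This follows already from the first step of the induction in Proposition \ref{prop:lower_deg} — the construction of points $x_1,y_1$ near the jumps where $F(u^\e)$ is small — which requires only $\|u^\e-v\|_{L^1}\to0$ plus \emph{any} bound of the form $E_\e[u^\e]\le Nc_\e+o(1)$ to run the mean-value argument on $\int F(u^\e)/\e$; that is exactly \eqref{TLS2}. Running just that first step at each transition and applying \eqref{eq:strangeineq+}/\eqref{eq:ineq} gives $E_\e[u^\e]\ge Nc_\e-\omega(\e)$ with $\omega(\e)\to0$, which together with the upper bound closes the argument uniformly in $\theta>1$. I would phrase the proof so that it quotes Propositions \ref{prop:lower} and \ref{prop:lower_deg} when their hypotheses are literally met (namely $\theta\ge2$, with $\nu$ dominated by the relevant rate) and otherwise extracts this minimal first-step estimate, remarking that it is precisely the content of the opening paragraphs of the proof of Proposition \ref{prop:lower_deg}.
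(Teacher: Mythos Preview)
Your proof is correct and follows exactly the paper's strategy: the paper's argument is the two-line sandwich
\[
Nc_\e - C R_{\theta,\e} \le E_\e[u^\e] \le Nc_\e + \nu(\e),
\]
citing Propositions~\ref{prop:lower}--\ref{prop:lower_deg} for the left inequality and \eqref{TLS2} for the right, followed by the explicit computation of $\lim_{\e\to0^+}c_\e$ via the expansions \eqref{eq:J-important} and \eqref{eq:expQ}, which matches yours.

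You are in fact more careful than the paper on one point. The paper's proof simply invokes Proposition~\ref{prop:lower_deg} without commenting on the mismatch between its hypothesis $E_\e[u]\le Nc_\e+C\e^{k_j}$ and the weaker assumption \eqref{TLS2}. Your observation that for the corollary one only needs $E_\e[u^\e]\ge Nc_\e-o(1)$, and that the base step $j=1$ of the induction already delivers $E_\e[u^\e]\ge Nc_\e-C\e^{\alpha}$ under \eqref{TLS1}--\eqref{TLS2} alone (since the mean-value extraction of $x_1,y_1$ uses only that $\int F(u^\e)/\e\le c_\e+\nu(\e)$ is bounded), is exactly the right patch and works uniformly for all $\theta>1$. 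The paper glosses over this; your treatment is the more complete one.
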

\begin{proof}
By Propositions \ref{prop:lower}-\ref{prop:lower_deg} and from \eqref{TLS2} we have:
\begin{equation}\label{eq:proofTLS}
Nc_\e -C R_{\theta, \e} \leq E_\e[u^\e] \leq Nc_\e + \nu(\e),
\end{equation}
where 
\begin{equation}\label{def:Rte}
R_{\theta, \e}= \left\{ \begin{aligned}
& \exp(-A/\varepsilon) \quad  &\mbox{if}  \quad &\theta=2 \\
& \e^{k_j}  \quad  &\mbox{if}  \quad &\theta>2,
\end{aligned}\right.
\end{equation}
with  $A$ appearing in Proposition \ref{prop:lower} and $\{ k_j \}_{j \in \N}$ defined in \eqref{eq:exp_alg}. 
Moreover, by using \eqref{eq:J-important} and \eqref{eq:expQ}, we deduce that
\begin{align*}
	Q(\e^2J_\e(F(s)))&=Q\left(\sqrt{\frac{2\e^2}{Q'(0)}F(s)}+\rho(\e^2F(s))\right)\\
	&=Q'(0)\left[\sqrt{\frac{2\e^2}{Q'(0)}F(s)}+\rho(\e^2F(s))\right]+o(\e),
\end{align*}
and substituting into the definition \eqref{eq:c_eps}, we end up with
\begin{equation*}
	\lim_{\e\to0^+} c_\e=\e^{-1}\int_{-1}^1 Q(\e^2J_\e(F(s)))\,ds
	=\sqrt{Q'(0)}\int_{-1}^{+1} \sqrt{2F(s)}\,ds.
\end{equation*}
Hence, the thesis follows by simply passing to the limit as $\e \to 0^+$ in \eqref{eq:proofTLS}.
\end{proof}

\subsection*{Example of a function satisfying the assumptions of Corollary \ref{cor:TLS}} 
We conclude this section by showing that there exist a family of functions satisfying assumptions \eqref{TLS1}-\eqref{TLS2}. 
First of all, we observe that it is easy to check that the {\it compactons} $\varphi_1$ and $\varphi_2$ constructed in Proposition \ref{prop:comp} satisfy \eqref{TLS1} 
and $E_\e[\varphi_{1}] =E_\e[\varphi_{2}] = N c_\e$.
However, such stationary solutions exist only in the case $\theta\in(1,2)$; 
the idea is to use a similar construction as the one done for compactons to obtain a function satisfying \eqref{TLS1}-\eqref{TLS2} also when $\theta \geq 2$; 
we underline that in this case these profiles are not stationary solutions.

Let us thus consider the increasing standing wave $\Phi_\e$,  solution to \eqref{eq:Fi}, and observe that 
\begin{equation*}
	\lim_{\e\to0}\Phi_\e(x)=
		\begin{cases}
			-1, \qquad & x<0,\\	
			0, &x=0,\\
			+1, & x>0.
		\end{cases}
\end{equation*}
Now, choose $\e_0>0$ small enough so that the condition \eqref{eq:maxF} holds true for any $\e\in(0,\e_0)$ and
fix $N\in\mathbb{N}$  transition points $a<h_1<h_2<\dots<h_n<b$. Denoted  by
\begin{equation*}
	m_1:=a, \qquad \quad m_j:=\frac{h_{j-1}+h_j}{2}, \quad j=2,\dots,N-1, \qquad \quad m_N:=b,
\end{equation*}
the middle points, we define
\begin{equation}\label{eq:translayer}
	u^\e(x):=\Phi_\e\left((-1)^j(x-h_j)\right), \qquad \qquad x\in[m_j,m_{j+1}], \qquad \qquad j=1,\dots N.
\end{equation}
Notice that $u^\e(h_j)=0$, for $j=1,\dots,N$ and for definiteness we choose $u^\e(a)<0$ (the case $u^\e(a)>0$ is analogous).
Let us now prove that $u^\e$ satisfies \eqref{TLS1}-\eqref{TLS2}.
It is easy to check that $u^\e\in H^1(a,b)$ and satisfies \eqref{TLS1}; concerning \eqref{TLS2},
the definitions of $E_\e$ and $u^\e$ give
\begin{equation*}
	E_\e[u^\e]=\sum_{j=1}^{N}\int_{m_j}^{m_{j+1}}\left[\frac{\tilde Q(\e^2\Phi'_\e)}{\e^3}+\frac{F(\Phi_\e)}\e\right]\,dx.
\end{equation*}
From \eqref{eq:impforenergy}, it follows that
\begin{align*}
	\int_{m_j}^{m_{j+1}}\left[\frac{\tilde Q(\e^2\Phi'_\e)}{\e^3}+\frac{F(\Phi_\e)}\e\right]\,dx&=
	\e^{-1}\int_{m_j}^{m_{j+1}}\left[\Phi'Q(\e^2 J_\e (F(\Phi_\e)))\right]\,dx\\
	&=\e^{-1}\int_{\Phi_\e(m_j)}^{\Phi_\e(m_{j+1})}\left[Q(\e^2 J_\e (F(s)))\right]\,ds<c_\e,
\end{align*}
where $c_\e$ is defined in \eqref{eq:c_eps}.
Summing up all the terms we end up with $E_\e[u^\e]\leq Nc_\e$ which clearly implies \eqref{TLS2}.

\section{Slow motion}\label{sec:slow}	
In this last section we investigate the long time dynamics of the solutions to \eqref{eq:Q-model}-\eqref{eq:Neu}-\eqref{eq:initial} 
with a special focus to their different speed rate of convergence towards an asymptotic configuration, 
which heavily depends on the parameter $\theta$ appearing in the potential $F$ (see the assumptions \eqref{ipoF1} and \eqref{ipoF2}).
Specifically, the main results of this section are contained in Theorems \ref{thm:main} and \ref{thm:main2}:  
in the former we prove that in the critical case $\theta=2$ the solutions to \eqref{eq:Q-model}-\eqref{eq:Neu}-\eqref{eq:initial} exhibit a metastable behaviour, 
namely, they maintain the same unstable structure of the initial datum for a time which is exponentially long with respect to the parameter $\e$; 
in the latter, concerning instead the supercritical case $\theta>2$, we prove slow motion with a speed rate which is only algebraic with respect to $\e$. 

\subsection*{Preliminary assumptions} 
Here and in the rest of the section, we fix a function $v$ as in \eqref{vstruct} and we assume that the initial datum in \eqref{eq:initial} depends on $\e$ and satisfies
\begin{equation}\label{eq:ass-u0}
	\lim_{\varepsilon\rightarrow 0^+} \|u^\varepsilon_0-v\|_{{}_{L^1}}=0.
\end{equation}
Moreover, we assume that there exist  $C, \e_0>0$ such that, for any $\e\in(0,\e_0)$,
\begin{equation}\label{eq:energy-ini}
	E_\varepsilon[u^\varepsilon_0]\leq Nc_\e+ C R_{\theta,\e},
\end{equation}
where $R_{\theta,\e}$ is defined in \eqref{def:Rte}, that is $u^\e_0$ satisfies the assumptions of Corollary \ref{cor:TLS}.

We emphasize that an initial datum  as the one satisfying the assumptions \eqref{eq:ass-u0}-\eqref{eq:energy-ini} is far from being a stationary solution if and only if $\theta \geq 2$; indeed, when $\theta  \in (1,2)$, in Proposition \ref{prop:comp} we proved the existence of a particular class of stationary solutions (compactons), which have an arbitrary number of transition layers that are randomly located inside the interval $[a,b]$. 
Hence, an initial datum satisfying \eqref{eq:ass-u0}-\eqref{eq:energy-ini} is either a steady state or a small perturbation of it; 
thus, proving that the corresponding time dependent solution maintains the same structure for long times is either trivially true 
(indeed, the same structure is maintained for all $t>0$) or only a partial result,
since we do not know if such structure will be lost at some point. 
In other words, in the case $\theta\in(1,2)$ transition layers do not evolve in time and could persist forever.
On the contrary, if $\theta \geq 2$, then stationary solutions can only have layers that are equidistant (see Proposition \ref{periodic:bounded}), 
so that an initial configuration as $u^\e_0$ is in general far away from any steady state.

\subsection{Exponentially slow motion}\label{sec:exp_met}
In this subsection we examine the persistence of layered solutions to \eqref{eq:Q-model}-\eqref{eq:Neu}-\eqref{eq:initial} when $\theta=2$ in the assumption \eqref{ipoF2} on the potential $F$. In particular, we prove that in this case a metastability phenomenon occurs, showing that the solutions perpetuate the same behavior of the initial datum for an $\e$-exponentially long time, that is, at least for a time equals $m\,e ^{A/\e}$ for some $A>0$ and any $m>0$, both independent on $\e$, as stated in the following theorem.

\begin{thm}[exponentially slow motion when $\theta=2$]\label{thm:main}
Assume that $Q\in C^1(\R)$ satisfies \eqref{eq:Q-ass1}-\eqref{eq:Q-ass2} and that $F\in C^1(\R)$ satisfies \eqref{ipoF1}-\eqref{ipoF2} with $\theta=2$. 
Let $v$ be as in \eqref{vstruct} and $A\in(0,r\sqrt{2\lambda_1 \mathcal{Q}^{-1}})$, with $\mathcal{Q}$ defined in \eqref{eq:mathc-Q} and
$\lambda_1>0$ (independent on $\e$) as in \eqref{ipoF2}.
If $u^\varepsilon$ is the solution of \eqref{eq:Q-model}-\eqref{eq:Neu}-\eqref{eq:initial}
with initial datum $u_0^{\varepsilon}$ satisfying \eqref{eq:ass-u0} and \eqref{eq:energy-ini} with $R_{\theta,\e}=  \exp(-A/\e)$, then
\begin{equation}\label{eq:limit}
	\sup_{0\leq t\leq m\exp(A/\varepsilon)}\|u^\varepsilon(\cdot,t)-v\|_{{}_{L^1}}\xrightarrow[\varepsilon\rightarrow0]{}0,
\end{equation}
for any $m>0$.
\end{thm}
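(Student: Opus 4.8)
The argument is the Bronsard--Kohn energy method adapted to the present setting: one plays the Lyapunov identity of Lemma~\ref{lem:energy} against the sharp lower bound of Proposition~\ref{prop:lower}. Throughout, let $\delta>0$ be the constant provided by Proposition~\ref{prop:lower} (for the given $A$ and $v$), and shrink $\e_0>0$ if necessary so that, for every $\e\in(0,\e_0)$, Proposition~\ref{prop:lower} applies, the hypothesis \eqref{eq:energy-ini} holds with $R_{\theta,\e}=\exp(-A/\e)$, and \eqref{eq:maxF} is in force. For each such $\e$ I would introduce the maximal trapping time
\[
	t_\e:=\sup\Bigl\{t\geq0:\ \|u^\e(\cdot,s)-v\|_{{}_{L^1}}\leq\delta\ \text{ for every }s\in[0,t]\Bigr\}.
\]
By \eqref{eq:ass-u0} one has $\|u^\e_0-v\|_{{}_{L^1}}<\delta$ once $\e$ is small, so $t_\e>0$; and since $t\mapsto u^\e(\cdot,t)$ is continuous in $L^1$ (granted the regularity $u^\e\in C([0,T],H^2(a,b))\cap C^1([0,T],H^1(a,b))$ already used in Lemma~\ref{lem:energy}), if $t_\e<+\infty$ then $\|u^\e(\cdot,t_\e)-v\|_{{}_{L^1}}=\delta$.

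The heart of the matter is a displacement bound valid on $[0,t_\e]$. For $t$ in that interval Proposition~\ref{prop:lower} gives $E_\e[u^\e](t)\geq Nc_\e-C\exp(-A/\e)$, hence the integrated energy identity \eqref{eq:energyestimate} together with \eqref{eq:energy-ini} yields
\[
	\e^{-1}\!\int_0^t\!\!\int_a^b u_t^2\,dx\,ds=E_\e[u^\e_0]-E_\e[u^\e](t)\leq 2C\exp(-A/\e).
\]
Writing $u^\e(x,t)-u^\e_0(x)=\int_0^t u_t(x,s)\,ds$ and applying the Cauchy--Schwarz inequality first in $x$ and then in $s$, one obtains
\begin{align*}
	\|u^\e(\cdot,t)-u^\e_0\|_{{}_{L^1}}&\leq\int_0^t\!\!\int_a^b|u_t|\,dx\,ds\leq (b-a)^{\frac12}\,t^{\frac12}\Bigl(\int_0^t\!\!\int_a^b u_t^2\,dx\,ds\Bigr)^{\frac12}\\
	&\leq\bigl(2C(b-a)\,\e\, t\,\exp(-A/\e)\bigr)^{\frac12},
\end{align*}
for every $t\in[0,t_\e]$. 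In particular, if $t\leq\min\{t_\e,\,m\exp(A/\e)\}$ the right-hand side is at most $\bigl(2C(b-a)m\bigr)^{\frac12}\e^{\frac12}$, which tends to $0$ as $\e\to0^+$.

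It then remains to show $t_\e\geq m\exp(A/\e)$ for all small $\e$. Were it the case that $t_\e<m\exp(A/\e)$, then evaluating the previous estimate at $t=t_\e$ and using the triangle inequality with \eqref{eq:ass-u0} would give
\[
	\delta=\|u^\e(\cdot,t_\e)-v\|_{{}_{L^1}}\leq\|u^\e_0-v\|_{{}_{L^1}}+\bigl(2C(b-a)m\bigr)^{\frac12}\e^{\frac12}<\delta
\]
once $\e$ is sufficiently small --- a contradiction. Hence $t_\e\geq m\exp(A/\e)$, and the displacement bound then gives
\[
	\sup_{0\leq t\leq m\exp(A/\e)}\|u^\e(\cdot,t)-v\|_{{}_{L^1}}\leq\|u^\e_0-v\|_{{}_{L^1}}+\bigl(2C(b-a)m\bigr)^{\frac12}\e^{\frac12}\xrightarrow[\e\to0^+]{}0,
\]
which is precisely \eqref{eq:limit}.

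The only genuinely delicate point is the continuation step: one must ensure that the \emph{same} $\delta$ simultaneously serves as the hypothesis radius in Proposition~\ref{prop:lower} and as the radius of the trapping region, so that the lower bound may be invoked \emph{uniformly} over all of $[0,t_\e]$; this dictates the order in which the constants are fixed ($\delta$ from Proposition~\ref{prop:lower} first, then $\e_0$). Tacitly assumed here, exactly as in Lemma~\ref{lem:energy}, is enough well-posedness and regularity for \eqref{eq:Q-model}-\eqref{eq:Neu} to justify the energy identity and the $L^1$-continuity of $t\mapsto u^\e(\cdot,t)$ --- a genuinely nontrivial issue for a forward--backward diffusion, but one outside the present scope. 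Granting it, the remainder is the routine energy bookkeeping displayed above.
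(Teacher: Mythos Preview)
Your proof is correct and follows essentially the same Bronsard--Kohn energy argument as the paper: combine the lower bound of Proposition~\ref{prop:lower} with the Lyapunov identity \eqref{eq:energyestimate} to control $\int\|u_t\|_{L^2}^2$, then use Cauchy--Schwarz and a continuation/trapping argument. The only cosmetic difference is that the paper factors out the dissipation bound as a separate Proposition~\ref{prop:L2-norm} and defines the trapping time via $\int_0^{\hat T}\|u_t^\e\|_{L^1}\,dt\leq\tfrac12\delta$ rather than your more direct $\|u^\e(\cdot,t)-v\|_{L^1}\leq\delta$, but the substance is identical.
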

The proof of Theorem \ref{thm:main} strongly relies on the following result which provides an estimate from above of the $L^2$-norm of the derivative with respect to time of the solution $u^\e(\cdot,t)$ to \eqref{eq:Q-model}-\eqref{eq:Neu}-\eqref{eq:initial} under the assumptions of Theorem \ref{thm:main}. 
\begin{prop}\label{prop:L2-norm}
Under the same assumptions of Theorem \ref{thm:main}, there exist positive constants $\varepsilon_0, C_1, C_2>0$ (independent on $\varepsilon$) such that
\begin{equation}\label{L2-norm}
	\int_0^{C_1\varepsilon^{-1}\exp(A/\varepsilon)}\|u_t^\varepsilon\|^2_{{}_{L^2}}dt\leq C_2\varepsilon\exp(-A/\varepsilon),
\end{equation}
for all $\varepsilon\in(0,\varepsilon_0)$.
\end{prop}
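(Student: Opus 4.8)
The plan is to combine the energy dissipation identity from Lemma \ref{lem:energy} with the lower bound for $E_\e$ of Proposition \ref{prop:lower} in a by-now classical way (cf. \cite{Bron-Kohn,Grant}). First I would recall that the map $t\mapsto E_\e[u^\e](t)$ is non-increasing by \eqref{eq:energyestimate1}, and that by \eqref{eq:energyestimate} one has, for every $T>0$,
\begin{equation*}
	\e^{-1}\int_0^T\|u^\e_t\|^2_{L^2}\,dt = E_\e[u^\e](0)-E_\e[u^\e](T).
\end{equation*}
The idea is that the right-hand side is \emph{small}: by the hypothesis \eqref{eq:energy-ini} with $R_{\theta,\e}=\exp(-A/\e)$ we have $E_\e[u^\e](0)\leq Nc_\e+C\exp(-A/\e)$, so the whole argument reduces to showing that $E_\e[u^\e](T)\geq Nc_\e-C\exp(-A/\e)$ uniformly for $T$ up to a time of order $\e^{-1}\exp(A/\e)$. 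This lower bound is exactly what Proposition \ref{prop:lower} delivers, \emph{provided} the solution still satisfies the closeness condition $\|u^\e(\cdot,t)-v\|_{L^1}\leq\delta$, with $\delta$ the constant furnished by that proposition.

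Hence the core of the proof is a continuation/bootstrap argument to guarantee that $\|u^\e(\cdot,t)-v\|_{L^1}$ stays below $\delta$ on the relevant time interval. I would argue by contradiction: let $t_\e$ be the first time at which $\|u^\e(\cdot,t_\e)-v\|_{L^1}=\delta$ (if no such time exists before $C_1\e^{-1}\exp(A/\e)$ we are done, taking $C_1$ as in the statement). On $[0,t_\e]$ the hypothesis of Proposition \ref{prop:lower} holds, so $E_\e[u^\e](t)\geq Nc_\e-C\exp(-A/\e)$ for all $t\in[0,t_\e]$, and therefore
\begin{equation*}
	\e^{-1}\int_0^{t_\e}\|u^\e_t\|^2_{L^2}\,dt = E_\e[u^\e](0)-E_\e[u^\e](t_\e)\leq 2C\exp(-A/\e).
\end{equation*}
On the other hand, the $L^1$-distance traveled by $u^\e$ is controlled by the time-derivative: writing
\begin{equation*}
	\|u^\e(\cdot,t_\e)-u^\e_0\|_{L^1}\leq\int_0^{t_\e}\|u^\e_t(\cdot,t)\|_{L^1}\,dt\leq\sqrt{b-a}\int_0^{t_\e}\|u^\e_t(\cdot,t)\|_{L^2}\,dt,
\end{equation*}
and then applying Cauchy--Schwarz in time, $\int_0^{t_\e}\|u^\e_t\|_{L^2}\,dt\leq t_\e^{1/2}\left(\int_0^{t_\e}\|u^\e_t\|^2_{L^2}\,dt\right)^{1/2}\leq t_\e^{1/2}\bigl(2C\e\exp(-A/\e)\bigr)^{1/2}$. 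Combining with $\|u^\e_0-v\|_{L^1}\to0$ from \eqref{eq:ass-u0}, one gets
\begin{equation*}
	\delta=\|u^\e(\cdot,t_\e)-v\|_{L^1}\leq\|u^\e_0-v\|_{L^1}+\sqrt{(b-a)}\,\sqrt{2C}\,\bigl(\e\,t_\e\,\exp(-A/\e)\bigr)^{1/2}.
\end{equation*}
For $t_\e\leq C_1\e^{-1}\exp(A/\e)$ the last term is $\leq\sqrt{(b-a)}\sqrt{2CC_1}$, which is a constant independent of $\e$; choosing $C_1$ small enough (once $\delta$ is fixed by Proposition \ref{prop:lower}) makes the whole right-hand side strictly less than $\delta$ for $\e$ small, contradicting the definition of $t_\e$. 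Therefore no such $t_\e\leq C_1\e^{-1}\exp(A/\e)$ exists, the bound $\|u^\e(\cdot,t)-v\|_{L^1}\leq\delta$ persists on that interval, and the displayed energy estimate gives \eqref{L2-norm} with $C_2=2C$.

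\textbf{Main obstacle.} The routine parts are the two Cauchy--Schwarz steps (once in space, exchanging $L^1$ for $L^2$ on the bounded interval, once in time) and bookkeeping of constants. The delicate point is the self-consistency of the continuation argument: Proposition \ref{prop:lower} requires $\|u-v\|_{L^1}\leq\delta$ with a \emph{fixed} $\delta=\delta(Q,F,v,A)$, and one must check that the $L^1$-drift accumulated over a time window of length $C_1\e^{-1}\exp(A/\e)$ can be made smaller than this same $\delta$ by choosing $C_1$ small — crucially, the window length times $\e\exp(-A/\e)$ is exactly $C_1$, so the exponential cancels and the estimate closes. One should also make sure the regularity class $u^\e\in C([0,T],H^2)\cap C^1([0,T],H^1)$ assumed in Lemma \ref{lem:energy} holds for the solution on the whole interval considered (global existence / a priori regularity for \eqref{eq:Q-model}-\eqref{eq:Neu}-\eqref{eq:initial}), which is implicit in the hypotheses but worth flagging; granting it, the argument above is complete.
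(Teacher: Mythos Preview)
Your proposal is correct and follows essentially the same approach as the paper: combine the energy dissipation identity \eqref{eq:energyestimate} with the upper bound \eqref{eq:energy-ini} and the lower bound of Proposition \ref{prop:lower}, and close the argument with a continuation step based on Cauchy--Schwarz/H\"older in space and time. The only cosmetic difference is that the paper defines the stopping time $\hat T$ as the first time the accumulated drift $\int_0^{\hat T}\|u^\e_t\|_{L^1}\,dt$ reaches $\tfrac12\delta$ (together with $\|u_0^\e-v\|_{L^1}\le\tfrac12\delta$), whereas you track $\|u^\e(\cdot,t)-v\|_{L^1}$ directly and stop when it hits $\delta$; both formulations yield the same lower bound $\hat T,\,t_\e\ge C_1\e^{-1}\exp(A/\e)$ after the exponential cancellation you correctly identified.
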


\begin{proof}
Let $\varepsilon_0>0$ so small that for all $\varepsilon\in(0,\varepsilon_0)$, \eqref{eq:energy-ini} holds and 
\begin{equation}\label{1/2delta}
	\|u_0^\varepsilon-v\|_{{}_{L^1}}\leq\frac12\delta,
\end{equation}
where $\delta$ is the constant of Proposition \ref{prop:lower}. 
Let $\hat T>0$; we claim that if
\begin{equation}\label{claim1}
	\int_0^{\hat T}\|u_t^\varepsilon\|_{{}_{L^1}}dt\leq\frac12\delta,
\end{equation}
then there exists $C>0$ such that
\begin{equation}\label{claim2}
	E_\varepsilon[u^\varepsilon](\hat T)\geq Nc_\e-C\exp(-A/\varepsilon).
\end{equation}
Indeed, inequality \eqref{claim2} follows from Proposition \ref{prop:lower} if $\|u^\varepsilon(\cdot,\hat T)-v\|_{{}_{L^1}}\leq\delta$.
By using triangle inequality, \eqref{1/2delta} and \eqref{claim1}, we obtain
\begin{equation*}
	\|u^\varepsilon(\cdot,\hat T)-v\|_{{}_{L^1}}\leq\|u^\varepsilon(\cdot,\hat T)-u_0^\varepsilon\|_{{}_{L^1}}+\|u_0^\varepsilon-v\|_{{}_{L^1}}
	\leq\int_0^{\hat T}\|u_t^\varepsilon\|_{{}_{L^1}}+\frac12\delta\leq\delta.
\end{equation*}
Substituting \eqref{eq:energy-ini} and \eqref{claim2} in \eqref{eq:energyestimate}, one has 
\begin{equation}\label{L2-norm-Teps}
	\int_0^{\hat T}\|u_t^\varepsilon\|^2_{{}_{L^2}}dt\leq C_2\e\exp(-A/\varepsilon).
\end{equation}
It remains to prove that inequality \eqref{claim1} holds for $\hat T\geq C_1\e^{-1}\exp(A/\varepsilon)$.
If 
\begin{equation*}
	\int_0^{+\infty}\|u_t^\varepsilon\|_{{}_{L^1}}dt\leq\frac12\delta,
\end{equation*}
there is nothing to prove. 
Otherwise, choose $\hat T$ such that
\begin{equation*}
	\int_0^{\hat T}\|u_t^\varepsilon\|_{{}_{L^1}}dt=\frac12\delta.
\end{equation*}
Using  H\"older's inequality and \eqref{L2-norm-Teps}, we infer
\begin{equation*}
	\frac12\delta\leq[\hat T(b-a)]^{1/2}\biggl(\int_0^{\hat T}\|u_t^\varepsilon\|^2_{{}_{L^2}}dt\biggr)^{1/2}\leq
	\left[\hat T(b-a)C_2\varepsilon\exp(-A/\varepsilon)\right]^{1/2},
\end{equation*}
so that there exists $C_1>0$ such that
\begin{equation*}
	\hat T\geq C_1\varepsilon^{-1}\exp(A/\varepsilon),
\end{equation*}
and the proof is complete.
\end{proof}

Now we are ready to prove Theorem \ref{thm:main}.
\begin{proof}[Proof of Theorem \ref{thm:main}]
Fix $m>0$.
As a consequence of the triangle inequality we have that
\begin{equation}\label{trianglebar}
	\sup_{0\leq t\leq m\exp(A/\varepsilon)}\|u^\varepsilon(\cdot,t)-v\|_{{}_{L^1}}\leq \sup_{0\leq t\leq m\exp(A/\varepsilon)}\|u^\varepsilon(\cdot,t)-u_0^\varepsilon\|_{{}_{L^1}}+\|u_0^\varepsilon-v\|_{{}_{L^1}}.
\end{equation}
Hence, since the second term in the right-hand side of  \eqref{trianglebar} tends to $0$ by \eqref{eq:ass-u0}, in order to prove \eqref{eq:limit}, it is sufficient to show that 
\begin{equation}\label{primopezzo}
	\sup_{0\leq t\leq m\exp(A/\varepsilon)}\|u^\varepsilon(\cdot,t)-u_0^\e\|_{{}_{L^1}}\xrightarrow[\varepsilon\rightarrow0]{}0.
\end{equation}
To this aim, we first observe that up to taking $\e$ so small that $m<C_1\e^{-1}$, we can apply  \eqref{L2-norm} to deduce
\begin{equation}\label{proof:usata}
\int_0^{m\exp(A/\varepsilon)}\|u_t^\varepsilon\|^2_{{}_{L^2}}dt\leq C_2\varepsilon\exp(-A/\varepsilon).
\end{equation}
Moreover, for all $t\in[0,m\exp(A/\varepsilon)]$ we have
\begin{equation*}
\begin{aligned}
\|u^\e(\cdot,t)-u^\e_0\|_{{}_{L^1}}&\leq\int_0^{m\exp(A/\varepsilon)}\|u_t^\e(\cdot,t)\|_{{}_{L^1}}\,dt  \\
&\leq \sqrt{m(b-a)} \exp(A/2\varepsilon) \left( \int_0^{m\exp(A/\varepsilon)}\|u_t^\e(\cdot,t)\|^2_{{}_{L^2}}\,dt \right)^{\frac{1}{2}}
\leq C\sqrt\e,
\end{aligned}
\end{equation*}	
where we applied  H\"older's inequality and \eqref{proof:usata}. 
We thus obtained \eqref{primopezzo} and the proof is complete.
\end{proof}

\begin{rem}\label{rem:Q}
We stress that the constant $\mathcal Q$ defined in \eqref{eq:mathc-Q} (and appearing the first time in the constant $A$ of the lower bound \eqref{eq:lower}) plays a relevant role in the dynamics of the solution; indeed, such lower bound is needed to prove Theorem \ref{thm:main}, and, from estimate \eqref{eq:limit}, we can clearly see that the bigger is $A$ (that is the smaller is $\mathcal Q$) the slower is the dynamics. As to give a hint on what happens even with a small variation of $\mathcal Q$, if we choose $Q'$ so that its maximum is $4$ instead of $1$ (as it is for the examples \eqref{ex:fluxfunction} we considered)  then the time taken for the solution to drift apart from the initial datum $u_0$ reduces from $T_\e$ to $\sqrt{T_\e}$; we will see further details with the numerical simulations of Section \ref{numerics}.
\end{rem}

\subsection{Algebraic slow motion}
In this subsection we consider the case in which the potential $F$ satisfies assumption \eqref{ipoF2} with $\theta>2$
and we show that 
the evolution of the solutions drastically changes with respect to the critical case $\theta=2$, studied in Section \ref{sec:exp_met}.
Indeed, the exponentially slow motion proved in Theorem \ref{thm:main} is a peculiar phenomenon of \emph{non-degenerate potentials}, 
while if $\theta>2$, then the solution maintains the same unstable structure of the initial profile \emph{only} for an algebraically long time with respect to $\e$, 
that is, at least for a time equals $ l\e^{-\beta}$, for any $l>0$, with $\beta>0$ defined in \eqref{eq:beta}.
This is a consequence of the fact that when $\theta>2$, we have no longer a lower bound like the one exhibited in Proposition \ref{prop:lower} (with an exponentially small reminder),
but only a lower bound with an algebraic small reminder, see Proposition \ref{prop:lower_deg}.
Our second main result is the following one.

\begin{thm}[algebraic slow motion when $\theta>2$]\label{thm:main2}
Assume that $Q\in C^1(\R)$ satisfies \eqref{eq:Q-ass1}-\eqref{eq:Q-ass2} and that $F\in C^1(\R)$ satisfies \eqref{ipoF1}-\eqref{ipoF2} with $\theta>2$. 
Moreover, let $v$ be as in \eqref{vstruct} and let $\left\{k_j\right\}_{j\in\mathbb N}$ be as in \eqref{eq:exp_alg}. 
If $u^\varepsilon$ is the solution to \eqref{eq:Q-model}-\eqref{eq:Neu}-\eqref{eq:initial},
with initial profile $u_0^{\varepsilon}$ satisfying \eqref{eq:ass-u0} and \eqref{eq:energy-ini} with $R_{\theta,\e}=\e^{k_j}$, then 
\begin{equation}\label{eq:limit-deg}
	\sup_{0\leq t\leq l{\e^{-k_j}}}\|u^\varepsilon(\cdot,t)-v\|_{{}_{L^1}}\xrightarrow[\varepsilon\rightarrow0]{}0,
\end{equation}
for any $l>0$.
\end{thm}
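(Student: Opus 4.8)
The plan is to mirror the proof of Theorem \ref{thm:main}, replacing at each step the exponentially small lower bound of Proposition \ref{prop:lower} by the algebraically small lower bound of Proposition \ref{prop:lower_deg}. The only genuinely new feature of the supercritical case is that the lower bound of Proposition \ref{prop:lower_deg} is \emph{conditional}: besides closeness to $v$ in $L^1$, it requires the a priori bound $E_\e[u]\le Nc_\e+C\e^{k_j}$. This extra hypothesis is furnished for free by the Lyapunov property of the energy (Lemma \ref{lem:energy}): since $t\mapsto E_\e[u^\e(\cdot,t)]$ is non-increasing, the assumption \eqref{eq:energy-ini} on $u_0^\e$ (with $R_{\theta,\e}=\e^{k_j}$) propagates to every later time, so the a priori bound holds along the entire trajectory.

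\textbf{Step 1: an $L^2$-in-time estimate for $u_t^\e$.} Fix $j\in\N$, let $\delta_j,C_j>0$ be the constants of Proposition \ref{prop:lower_deg}, and choose $\e_0>0$ so small that for all $\e\in(0,\e_0)$ the bound \eqref{eq:energy-ini} holds with $R_{\theta,\e}=\e^{k_j}$ and $\|u_0^\e-v\|_{{}_{L^1}}\le\tfrac12\delta_j$. I would first prove the analogue of Proposition \ref{prop:L2-norm}: there exist $C_1,C_2>0$ such that
\begin{equation*}
	\int_0^{C_1\e^{-(k_j+1)}}\|u_t^\e\|_{{}_{L^2}}^2\,dt\le C_2\,\e^{k_j+1},\qquad \e\in(0,\e_0).
\end{equation*}
The argument copies that of Proposition \ref{prop:L2-norm}. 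If $\hat T>0$ satisfies $\int_0^{\hat T}\|u_t^\e\|_{{}_{L^1}}\,dt\le\tfrac12\delta_j$, then the triangle inequality gives $\|u^\e(\cdot,\hat T)-v\|_{{}_{L^1}}\le\delta_j$, while Lemma \ref{lem:energy} gives $E_\e[u^\e(\cdot,\hat T)]\le E_\e[u_0^\e]\le Nc_\e+C\e^{k_j}$; hence both hypotheses of Proposition \ref{prop:lower_deg} hold and $E_\e[u^\e(\cdot,\hat T)]\ge Nc_\e-C_j\e^{k_{j+1}}$. Feeding these two bounds into the energy identity \eqref{eq:energyestimate} and using $k_{j+1}>k_j$ (so $\e^{k_{j+1}}\le\e^{k_j}$ for small $\e$) yields $\int_0^{\hat T}\|u_t^\e\|_{{}_{L^2}}^2\,dt\le C_2\e^{k_j+1}$. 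The standard continuity argument then upgrades this: if $\int_0^{+\infty}\|u_t^\e\|_{{}_{L^1}}\,dt\le\tfrac12\delta_j$ there is nothing to prove, otherwise one selects $\hat T^\e$ with $\int_0^{\hat T^\e}\|u_t^\e\|_{{}_{L^1}}\,dt=\tfrac12\delta_j$, and Hölder's inequality combined with the previous bound forces $\hat T^\e\ge C_1\e^{-(k_j+1)}$, giving the displayed estimate.

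\textbf{Step 2: conclusion.} Fix $l>0$. By the triangle inequality and \eqref{eq:ass-u0}, it suffices to show $\sup_{0\le t\le l\e^{-k_j}}\|u^\e(\cdot,t)-u_0^\e\|_{{}_{L^1}}\to0$ as $\e\to0^+$. For $\e$ small enough that $l\e^{-k_j}\le C_1\e^{-(k_j+1)}$ (i.e. $\e\le C_1/l$) and for $t\in[0,l\e^{-k_j}]$, Hölder's inequality and Step 1 give
\begin{equation*}
	\|u^\e(\cdot,t)-u_0^\e\|_{{}_{L^1}}\le\int_0^t\|u_s^\e\|_{{}_{L^1}}\,ds\le\bigl[t(b-a)\bigr]^{1/2}\Bigl(\int_0^{C_1\e^{-(k_j+1)}}\|u_s^\e\|_{{}_{L^2}}^2\,ds\Bigr)^{1/2}\le\bigl[l(b-a)C_2\bigr]^{1/2}\e^{1/2},
\end{equation*}
which tends to $0$. (In fact this shows persistence up to times of order $\e^{-(k_j+1)}$, slightly longer than asserted.)

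\textbf{Main obstacle.} The substantive point, as opposed to the routine transcription of Theorem \ref{thm:main}, is precisely the use of the monotonicity of $E_\e$ to discharge the conditional hypothesis of Proposition \ref{prop:lower_deg}: without it, the inductive lower bound of that proposition could not be propagated along the flow. The remaining delicate items are bookkeeping: matching the constant $C$ in \eqref{eq:energy-ini} with the one in the a priori hypothesis of Proposition \ref{prop:lower_deg}, and ensuring the solution enjoys the regularity required by Lemma \ref{lem:energy} throughout $[0,C_1\e^{-(k_j+1)}]$; both are handled exactly as in the case $\theta=2$.
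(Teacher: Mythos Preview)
Your proposal is correct and follows essentially the same approach as the paper, which also proceeds by adapting the proof of Theorem \ref{thm:main} with Proposition \ref{prop:lower_deg} in place of Proposition \ref{prop:lower}, obtaining the identical intermediate estimate $\int_0^{C_1\e^{-(k_j+1)}}\|u_t^\e\|_{L^2}^2\,dt\le C_2\e^{k_j+1}$. Your write-up is in fact more explicit than the paper's sketch about the one nontrivial point, namely that the conditional upper-bound hypothesis \eqref{E_p(w)<Nc0+eps^l} of Proposition \ref{prop:lower_deg} is automatically propagated along the flow by the Lyapunov property of $E_\e$.
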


\begin{proof}
The proof follows the same steps of the proof of Theorem \ref{thm:main} 
and it is obtained by using Proposition \ref{prop:lower_deg} instead of Proposition \ref{prop:lower}.
In particular, proceeding as in the proof of Proposition \ref{prop:L2-norm}, one can prove that
there exist $\varepsilon_0, C_1, C_2>0$ (independent on $\varepsilon$) such that
\begin{equation*}
	\int_0^{C_1\varepsilon^{-(k_j+1)}}\|u_t^\varepsilon\|^2_{{}_{L^2}}dt\leq C_2\varepsilon^{k_j+1},
\end{equation*}
for all $\varepsilon\in(0,\varepsilon_0)$.
Thanks to the latter estimate, we can prove \eqref{eq:limit-deg} in the same way we proved \eqref{eq:limit} (see \eqref{trianglebar} and the following discussion).
\end{proof}

\subsection{Layer Dynamics}
In this last subsection, we ultimate our investigation giving a description of the slow motion 
of the transition points $h_1,\ldots,h_N$.
More precisely, we will incorporate the analysis in both the critical case (that is when $\theta=2$) and the subcritical one (namely $\theta>2$), showing that the transition layers evolve 
with a velocity which goes to zero as $\e\to0^+$, according to Theorem \ref{thm:interface}.  

For this, let us fix a function $v$ as in \eqref{vstruct} and define its {\it interface} $I[v]$ as the set
\begin{equation*}
	I[v]:=\{h_1,h_2,\ldots,h_N\}.
\end{equation*}
Moreover, for any function $u:[a,b]\rightarrow\mathbb{R}$ and for any closed subset $K\subset\R\backslash\{\pm1\}$,
the {\it interface} $I_K[u]$ is defined by
\begin{equation*}
	I_K[u]:=u^{-1}(K).
\end{equation*}
Finally, we recall the notion of {\it Hausdorff distance}  between any two subsets $A$ and $B$ of $\mathbb{R}$ denoted with $d(A,B)$ and given by 
\begin{equation*}
	d(A,B):=\max\biggl\{\sup_{\alpha\in A}d(\alpha,B),\,\sup_{\beta\in B}d(\beta,A)\biggr\},
\end{equation*}
where $d(\beta,A):=\inf\{|\beta-\alpha|: \alpha\in A\}$, for every $\beta\in B $.
 
Before stating the main result of this subsection (see Theorem \ref{thm:interface}), we prove the following lemma which is merely variational, meaning that it does not take into account the equation \eqref{eq:Q-model} , and establishes that, if a function $u\in H^1([a,b])$ is close to $v$ in $L^1$ and its energy
$E_\varepsilon[u]$ (defined in \eqref{eq:energy}) exceeds for a small quantity with respect to $\e$ the minimum energy to have $N$ transitions, then 
the distance between the interfaces $I_K[u]$ and $I_K[v]$ remains small.   
\begin{lem}\label{lem:interface}
Assume that $Q\in C^1(\R)$ satisfies \eqref{eq:Q-ass1}-\eqref{eq:Q-ass2}, $F\in C^1(\R)$ satisfies \eqref{ipoF1}-\eqref{ipoF2} with $\theta\geq 2$ 
and let $v$ be as in \eqref{vstruct}.
Given $\delta_1\in(0,r)$ and a closed subset $K\subset\R\backslash\{\pm1\}$, 
there exist positive constants $\hat\delta,\varepsilon_0$  such that, if for all $\varepsilon\in(0,\varepsilon_0)$  $u\in H^1([a,b])$ satisfies
\begin{equation}\label{eq:lem-interf}
	\|u-v\|_{{}_{L^1}}<\hat\delta \qquad \quad \mbox{ and } \qquad \quad E_\varepsilon[u]\leq Nc_\e+M_\e,
\end{equation}
for some $M_\e>0$ and with $E_\e[u]$ defined in \eqref{eq:energy}, we have
\begin{equation}\label{lem:d-interfaces}
	d(I_K[u], I[v])<\tfrac12\delta_1.
\end{equation}
\end{lem}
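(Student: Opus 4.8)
The plan is to argue by contradiction, exploiting the lower bounds on the energy established in Propositions \ref{prop:lower} and \ref{prop:lower_deg} together with the $L^1$-closeness of $u$ to $v$. Suppose the conclusion \eqref{lem:d-interfaces} fails; then there exist sequences $\e_n\to0^+$, $M_{\e_n}\to0^+$ and functions $u_n\in H^1([a,b])$ satisfying \eqref{eq:lem-interf} but with $d(I_K[u_n],I[v])\geq\frac12\delta_1$. By the definition of Hausdorff distance there are two cases: either there is a point $p_n\in I[v]$ (i.e.\ one of the $h_i$) whose distance to $I_K[u_n]$ is at least $\frac12\delta_1$, or there is a point $q_n\in I_K[u_n]$ whose distance to $I[v]$ is at least $\frac12\delta_1$. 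I would treat the two cases separately.

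In the first case, pick $h_i\in I[v]$ with $\operatorname{dist}(h_i,I_K[u_n])\geq\frac12\delta_1$. Since $K$ is a closed subset of $\R\setminus\{\pm1\}$, there is $\sigma>0$ with $K\subset\R\setminus\big((-1-\sigma,-1+\sigma)\cup(1-\sigma,1+\sigma)\big)$; hence on the whole interval $(h_i-\frac12\delta_1,h_i+\frac12\delta_1)$ the function $u_n$ avoids $K$, and by continuity it stays in one of the connected components of the complement of $K$ — in particular it cannot cross from a neighbourhood of $-1$ to a neighbourhood of $+1$ inside that interval. On the other hand, the $L^1$-bound $\|u_n-v\|_{L^1}<\hat\delta$ forces $u_n$ to be close to $-1$ on one side of $h_i$ and close to $+1$ on the other side (for $\hat\delta$ small, say on the scale of $r$ and $\delta_1$, one extracts, exactly as in the proof of Proposition \ref{prop:lower}, points $h_i-r_-$ and $h_i+r_+$ with $r_\pm<\frac12\delta_1$ where $u_n$ is within $\sigma$ of $\mp1$ respectively). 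These two facts are contradictory once $n$ is large: a genuine transition between the $\sigma$-neighbourhoods of $-1$ and $+1$ must pass through $K$.

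In the second case, pick $q_n\in I_K[u_n]$ with $\operatorname{dist}(q_n,I[v])\geq\frac12\delta_1$, so that $u_n(q_n)\in K$ while $v\equiv\pm1$ on the interval $(q_n-\frac12\delta_1,q_n+\frac12\delta_1)$, which lies inside one of the plateaux of $v$. Then on a subinterval of length $\frac12\delta_1$ the function $u_n$ departs from a value in $\{\pm1\}$ to a value in $K$, and since $K$ keeps a fixed distance $\sigma$ from $\pm1$, we get a lower bound $\int |u_n-v|\geq c\,\sigma$ on a set that is disjoint from the (at most $N$) genuine transition regions — contradicting $\|u_n-v\|_{L^1}<\hat\delta$ as soon as $\hat\delta<c\sigma$. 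Actually, to make this airtight one should combine the $L^1$-bound with the energy bound: the energy estimate \eqref{eq:lower} (resp.\ \eqref{E_p(w)>Nc0-eps^l}) shows that $E_{\e_n}[u_n]\geq Nc_{\e_n}-CR_{\theta,\e_n}$, and comparing with the upper bound $E_{\e_n}[u_n]\leq Nc_{\e_n}+M_{\e_n}$ shows that the ``excess energy'' is $o(1)$ relative to $c_{\e_n}$; this rules out the possibility of an extra spurious transition (of energy roughly $\geq c_{\e_n}\cdot\text{const}$, since crossing from $\{\pm1\}$ out to $K$ and back costs a fixed fraction of a full transition's energy), and hence rules out the configuration producing $q_n$.

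The main obstacle is making the dichotomy quantitatively precise with the right order of quantifiers: one must choose $\hat\delta$ (and then $\e_0$) depending only on $\delta_1$, $K$, $v$, $Q$ and $F$, and before knowing $M_\e$, so the argument must be uniform in $\e$ along the sequence. The cleanest route is the contradiction/compactness argument sketched above, where the uniformity is automatic; the technical heart is the same localized ODE-free estimate used in Proposition \ref{prop:lower} — extracting from the $L^1$-smallness the points $h_i\pm r_\pm$ near $\mp1$, then using inequality \eqref{eq:ineq} (which is a consequence of the crucial pointwise inequality \eqref{eq:strangeineq+}) to convert ``$u_n$ makes more than $N$ essentially-complete transitions'' into ``$E_{\e_n}[u_n]\geq Nc_{\e_n}+c_{\e_n}\theta_0$'' for some fixed $\theta_0>0$, contradicting \eqref{eq:lem-interf} once $M_{\e_n}+CR_{\theta,\e_n}<c_{\e_n}\theta_0$, which holds for large $n$ since $c_{\e_n}\to c_0>0$ while $M_{\e_n},R_{\theta,\e_n}\to0$.
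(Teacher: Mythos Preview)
The main divergence is in how you read $M_\e$. In the paper, $M_\e$ is not an arbitrary hypothesis quantity tending to zero; it is \emph{defined} inside the proof as
\[
M_\e:=2N\e^{-1}\max\Bigl\{\int_{1-\rho}^{1}Q(\e^2 J_\e(F(s)))\,ds,\;\int_{-1}^{-1+\rho}Q(\e^2 J_\e(F(s)))\,ds\Bigr\},
\]
where $\rho>0$ is first chosen (depending only on $K$ and $\delta_1$) so that $I_\rho:=(-1-\rho,-1+\rho)\cup(1-\rho,1+\rho)\subset\R\setminus K$ and the minimal ``gap energy'' $\inf\bigl\{\e^{-1}\bigl|\int_{\xi_1}^{\xi_2}Q(\e^2J_\e(F(s)))\,ds\bigr|:\xi_1\in K,\,\xi_2\in I_\rho\bigr\}$ exceeds $2M_\e$. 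With this choice the argument is a single direct energy comparison, with no sequences and without invoking Propositions~\ref{prop:lower}--\ref{prop:lower_deg}: from $\|u-v\|_{L^1}<\hat\delta$ one extracts points $x_i^\pm\in(h_i-\tfrac12\delta_1,h_i+\tfrac12\delta_1)$ with $u(x_i^\pm)\in I_\rho$; if $d(I_K[u],I[v])\geq\tfrac12\delta_1$ there must be an additional excursion from $I_\rho$ to $K$, and \eqref{eq:strangeineq+} (via \eqref{eq:ineq}) gives $E_\e[u]\geq (Nc_\e-M_\e)+2M_\e>Nc_\e+M_\e$, a contradiction.

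Your last paragraph does converge toward this idea, but the earlier two-case analysis carries real problems. The Case~1 topological claim (``by continuity it stays in one connected component of the complement of $K$, in particular it cannot cross from a neighbourhood of $-1$ to a neighbourhood of $+1$'') requires that $-1$ and $+1$ lie in different components of $\R\setminus K$, which is not assumed. Your first attempt at Case~2 (a single point $q_n$ producing $L^1$-mass $\geq c\sigma$) does not work as stated, since pointwise deviation gives no $L^1$-lower bound --- you correctly retreat to an energy argument, but then the quantification via ``$c_{\e_n}\theta_0$'' and the appeal to $M_{\e_n}\to0$ is circular once you realise $M_\e$ is the lemma's own output. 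The cleanest fix is to drop both the case split and the sequential setup and follow the direct route above, where the specific choice of $\rho$ and $M_\e$ is precisely what makes the final bookkeeping close.
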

\begin{proof}
Fix $\delta_1\in(0,r)$ and choose $\rho>0$ small enough that 
\begin{equation*}
	I_\rho:=(-1-\rho,-1+\rho)\cup(1-\rho,1+\rho)\subset\R\backslash K, 
\end{equation*}
and 
\begin{equation*}
	\inf\left\{\e^{-1}\left|\int_{\xi_1}^{\xi_2}Q\left(\e^2 J_\e(F(s))\right)\,ds\right| : \xi_1\in K, \xi_2\in I_\rho\right\}>2M_\e,
\end{equation*}
where
\begin{equation*}
	M_\e:=2N\e^{-1}\max\left\{\int_{1-\rho}^{1}Q\left(\e^2 J_\e(F(s))\right)\,ds, \, \int_{-1}^{-1+\rho}Q\left(\e^2 J_\e(F(s))\right)\,ds \right\}.
\end{equation*}
By using the first assumption in \eqref{eq:lem-interf} and by reasoning as in the proof of \eqref{2points} in Proposition \ref{prop:lower}, 
we can prove that, if we consider $h_i$ the discontinuous points of $v$, then for each $i= 1, \dots , N$ there exist
\begin{equation*}
	x^-_{i}\in(h_i-\delta_1/2,h_i) \qquad \textrm{and} \qquad x^+_{i}\in(h_i,h_i+\delta_1/2),
\end{equation*}
such that
\begin{equation*}
	|u(x^-_{i})-v(x^-_{i})|<\rho \qquad \textrm{and} \qquad |u(x^+_{i})-v(x^+_{i})|<\rho.
\end{equation*}
Now suppose by contraddicition that \eqref{lem:d-interfaces} is violated. 
Using \eqref{eq:strangeineq+}, we deduce
\begin{align}
	E_\varepsilon[u]\geq&\sum_{i=1}^N\left|\e^{-1}\int_{u(x^-_{i})}^{u(x^+_{i})}Q\left(\e^2 J_\e(F(s))\right)\,ds\right|\notag\\ 
	& \qquad +\inf\left\{\left|\e^{-1}\int_{\xi_1}^{\xi_2}Q\left(\e^2 J_\e(F(s))\right)\,ds\right| : \xi_1\in K, \xi_2\in I_\rho\right\}. \label{diseq:E1}
\end{align}
On the other hand, we have
\begin{align*}
	\left|\e^{-1}\int_{u(x^-_{i})}^{u(x^+_{i})}Q\left(\e^2 J_\e(F(s))\right)\,ds\right|&\geq\e^{-1}\int_{-1}^{1}Q\left(\e^2 J_\e(F(s))\right)\,ds\\
	&\qquad-\e^{-1}\int_{-1}^{-1+\rho}Q\left(\e^2 J_\e(F(s))\right)\,ds\\
	&\qquad -\e^{-1}\int_{1-\rho}^{1}Q\left(\e^2 J_\e(F(s))\right)\,ds\\
	&\geq c_\e-\frac{M_\e}{N}. 
\end{align*}
Substituting the latter bound in \eqref{diseq:E1}, we deduce
\begin{equation*}
	E_\varepsilon[u]\geq Nc_\e-M_\e+\inf\left\{\left|\e^{-1}\int_{\xi_1}^{\xi_2}Q\left(\e^2 J_\e(F(s))\right)\,ds\right| : \xi_1\in K, \xi_2\in I_\rho\right\},
\end{equation*}
which implies, because of the choice of $\rho$,  that
\begin{align*}
	E_\varepsilon[u]>Nc_\e+M_\e,
\end{align*}
which is a contradiction with assumption \eqref{eq:lem-interf}. Hence, the bound \eqref{lem:d-interfaces} is true and the proof is completed.
\end{proof}

Finally, thanks to Theorems \ref{thm:main}, \ref{thm:main2} and Lemma \ref{lem:interface}, we can prove the main result of the present subsection, which provides information about the slow motion of the transition layers as $\e$ goes to 0. In particular, we highlight that the minimum time so that the distance between the interface of the time-dependent solution $u^\e(\cdot,t)$ and that one of the initial datum becomes greater than a fixed quantity is exponentially big with respect to $\e$ in the critical case  $\theta=2$, while only algebraically large in the super critical case $\theta>2$. 
\begin{thm}\label{thm:interface}
Assume that $Q\in C^1(\R)$ satisfies \eqref{eq:Q-ass1}-\eqref{eq:Q-ass2} and that $F\in C^1(\R)$ satisfies \eqref{ipoF1}-\eqref{ipoF2} with {$\theta\geq 2$}.
Let $u^\varepsilon$ be the solution of \eqref{eq:Q-model}-\eqref{eq:Neu}-\eqref{eq:initial}, 
with initial datum $u_0^{\varepsilon}$ satisfying \eqref{eq:ass-u0} and \eqref{eq:energy-ini}. 
Given $\delta_1\in(0,r)$ and a closed subset $K\subset\R\backslash\{\pm1\}$, set
\begin{equation*}
	t_\varepsilon(\delta_1)=\inf\{t:\; d(I_K[u^\varepsilon(\cdot,t)],I_K[u_0^\varepsilon])>\delta_1\}.
\end{equation*}
Then, there exists $\varepsilon_0>0$ such that if $\varepsilon\in(0,\varepsilon_0)$
\begin{equation*}
	t_\varepsilon(\delta_1)> \left\{ \begin{aligned}
& \exp(A/\varepsilon) \quad  &\mbox{if}  \quad &\theta=2, \\
& \e^{-k_j}   \quad  &\mbox{if}  \quad &\theta>2,
\end{aligned}\right.
\end{equation*}
where $A$ and $k_j$ are defined as in Propositions \ref{prop:lower} and \ref{prop:lower_deg}, respectively. 
\end{thm}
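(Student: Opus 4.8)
The plan is to argue by contradiction, combining the $L^1$-convergence already established in Theorems \ref{thm:main} and \ref{thm:main2} with the variational localization provided by Lemma \ref{lem:interface}. Fix $\delta_1\in(0,r)$ and a closed set $K\subset\R\backslash\{\pm1\}$, and let $\hat\delta,\varepsilon_0>0$ be the constants furnished by Lemma \ref{lem:interface} for these data. Set $T_\varepsilon:=\exp(A/\varepsilon)$ if $\theta=2$ and $T_\varepsilon:=\varepsilon^{-k_j}$ if $\theta>2$. Suppose, for contradiction, that along some sequence $\varepsilon\to0^+$ there exists $t_\varepsilon\le T_\varepsilon$ with $d\bigl(I_K[u^\varepsilon(\cdot,t_\varepsilon)],I_K[u^\varepsilon_0]\bigr)>\delta_1$.

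First I would establish that at time $t=0$ the interface $I_K[u^\varepsilon_0]$ is already within $\tfrac12\delta_1$ of $I[v]$: this is immediate from Lemma \ref{lem:interface} applied to $u=u^\varepsilon_0$, since $\|u^\varepsilon_0-v\|_{L^1}\to0$ by \eqref{eq:ass-u0} (so $\|u^\varepsilon_0-v\|_{L^1}<\hat\delta$ for $\varepsilon$ small) and $E_\varepsilon[u^\varepsilon_0]\le Nc_\varepsilon+CR_{\theta,\varepsilon}\le Nc_\varepsilon+M_\varepsilon$ by \eqref{eq:energy-ini}, using that $R_{\theta,\varepsilon}\to0$ forces $CR_{\theta,\varepsilon}\le M_\varepsilon$ eventually. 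Next I would do the same at time $t_\varepsilon$: by Theorem \ref{thm:main} (resp. Theorem \ref{thm:main2}), since $t_\varepsilon\le T_\varepsilon$, we have $\sup_{0\le t\le T_\varepsilon}\|u^\varepsilon(\cdot,t)-v\|_{L^1}\to0$, hence in particular $\|u^\varepsilon(\cdot,t_\varepsilon)-v\|_{L^1}<\hat\delta$ for $\varepsilon$ small; moreover $E_\varepsilon$ is a Lyapunov functional by Lemma \ref{lem:energy}, so $E_\varepsilon[u^\varepsilon(\cdot,t_\varepsilon)]\le E_\varepsilon[u^\varepsilon_0]\le Nc_\varepsilon+M_\varepsilon$. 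Therefore Lemma \ref{lem:interface} applies to $u=u^\varepsilon(\cdot,t_\varepsilon)$ and gives $d\bigl(I_K[u^\varepsilon(\cdot,t_\varepsilon)],I[v]\bigr)<\tfrac12\delta_1$. Combining the two estimates through the triangle inequality for the Hausdorff distance yields $d\bigl(I_K[u^\varepsilon(\cdot,t_\varepsilon)],I_K[u^\varepsilon_0]\bigr)<\delta_1$, contradicting the assumption. This proves $t_\varepsilon(\delta_1)>T_\varepsilon$ for all $\varepsilon\in(0,\varepsilon_0)$.

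One technical point to handle carefully is that Lemma \ref{lem:interface} controls the distance of $I_K[u]$ to $I[v]=I_K[v]$, not directly to $I_K[u^\varepsilon_0]$; the triangle inequality for $d(\cdot,\cdot)$ is what bridges this, and I would make sure the constant $M_\varepsilon$ chosen in Lemma \ref{lem:interface} (which depends only on $\rho$, $N$ and the fixed data) dominates $CR_{\theta,\varepsilon}$ for all small $\varepsilon$, which it does since $R_{\theta,\varepsilon}\to0$ while $M_\varepsilon$ stays bounded below by a positive quantity of order $c_\varepsilon\to c_0>0$ times a fixed factor. I expect the main obstacle to be purely bookkeeping: lining up the hypotheses of Lemma \ref{lem:interface} (the smallness threshold $\hat\delta$, the admissible energy excess $M_\varepsilon$) uniformly with the $\varepsilon_0$ coming from Theorems \ref{thm:main}–\ref{thm:main2}, so that a single $\varepsilon_0$ works for the whole argument. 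No new analytic estimate is needed beyond what those theorems and Lemma \ref{lem:interface} already supply.
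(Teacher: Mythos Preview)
Your proposal is correct and follows essentially the same route as the paper's proof: apply Lemma \ref{lem:interface} both to $u_0^\varepsilon$ and to $u^\varepsilon(\cdot,t)$ for $t\le T_\varepsilon$ (using \eqref{eq:ass-u0}--\eqref{eq:energy-ini}, Theorems \ref{thm:main}--\ref{thm:main2}, and the monotonicity of the energy), then conclude via the triangle inequality for the Hausdorff distance. The only cosmetic difference is that the paper argues directly rather than by contradiction, and your remarks on aligning $M_\varepsilon$ with $CR_{\theta,\varepsilon}$ correctly flesh out what the paper leaves implicit.
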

\begin{proof}
First of all we notice  that if $u_0^\e$ satisfies \eqref{eq:ass-u0} and \eqref{eq:energy-ini}, then there exists  $\varepsilon_0>0$ so small that  $u_0^\e $ automatically verifies assumption \eqref{eq:lem-interf} for all $\varepsilon\in(0,\varepsilon_0)$. 
Hence, we are in the position to apply Lemma \ref{lem:interface}, obtaining that for all $\varepsilon\in(0,\varepsilon_0)$
\begin{equation}\label{interfaces-u0}
	d(I_K[u_0^\varepsilon], I[v])<\tfrac12\delta_1.
\end{equation}
Now, for each fixed $\varepsilon\in(0,\varepsilon_0)$, we consider $u^\varepsilon(\cdot,t)$ for all time $t>0$ such that
\begin{equation}\label{time}
t\leq\left\{ \begin{aligned}
& \exp(A/\varepsilon) \quad  &\mbox{if}  \quad &\theta=2, \\
& \e^{-k_j}   \quad  &\mbox{if}  \quad &\theta>2.
\end{aligned}\right.
\end{equation}
Then, $u^\varepsilon(\cdot,t)$ satisfies the first condition in assumption \eqref{eq:lem-interf} either thanks to \eqref{eq:limit} if $\theta=2$ or by \eqref{eq:limit-deg} if $\theta>2$. The second condition in \eqref{eq:lem-interf} can be easily deduced observing that the energy $E_\varepsilon[u^\varepsilon](t)$ is a non-increasing function of $t$.
Then, we have also that
\begin{equation}\label{interfaces-u}
	d(I_K[u^\varepsilon(\cdot,t)], I[v])<\tfrac12\delta_1
\end{equation}
for all $t$ as in \eqref{time}.  Combining \eqref{interfaces-u0} and \eqref{interfaces-u}, and using the triangle inequality, we deduce that
\begin{equation*}
	d(I_K[u^\varepsilon(\cdot,t)],I_K[u_0^\varepsilon])<\delta_1
\end{equation*}
for all $t$ as in \eqref{time}, as desired.
\end{proof}

\begin{rem}
We underline, according to Theorem \ref{thm:interface}, one must wait an extremely long time (which is either exponentially or algebraically long, depending on wether $\theta=2$ or $\theta >2$) to see an appreciable change in the position of the zeros of $u^\e$. Once again, this proves a slow dynamics of the solution only because 
$\theta \geq 2$; indeed, in such a case we are sure that an initial datum $u_0^\e$ satisfying  \eqref{eq:ass-u0}-\eqref{eq:energy-ini} is neither a stationary solution nor  is close to it, implying that there exists a {\it finite} time $\bar t$ such that the solution $u^\e(\cdot, \bar t)$  will drift apart from $u_0^\e$. Hence, proving that the layers of $u^\e(\cdot,  t)$ stay close to the layers of $u_0^\e$  for  long times is not a trivial result.
\end{rem}

\subsection{Numerical experiments}\label{numerics}
We conclude the paper with some numerical simulations showing the slow evolution of the solutions to \eqref{eq:Q-model}-\eqref{eq:Neu}-\eqref{eq:initial} rigorously described in the previous analysis. All the numerical computations, done for the sole purpose of illustrating the theoretical results, were performed using the built-in solver \texttt{pdepe} by \textsc{Matlab}$^\copyright$, which is a set of tools to solve PDEs in one space dimension.
In all the examples we consider equation \eqref{eq:Q-model} with $Q$ given by one of the two explicit functions of \eqref{ex:fluxfunction}, while $F(u)=\frac{1}{2\theta}|1-u^2|^\theta$ (see \eqref{F:ex}) with different values of $\theta \geq 2$, depending on whether we aim at showing exponentially or algebraic slow motion.

\begin{figure}[t]
	\begin{center}
		\includegraphics[width=7cm,height=5.7cm]{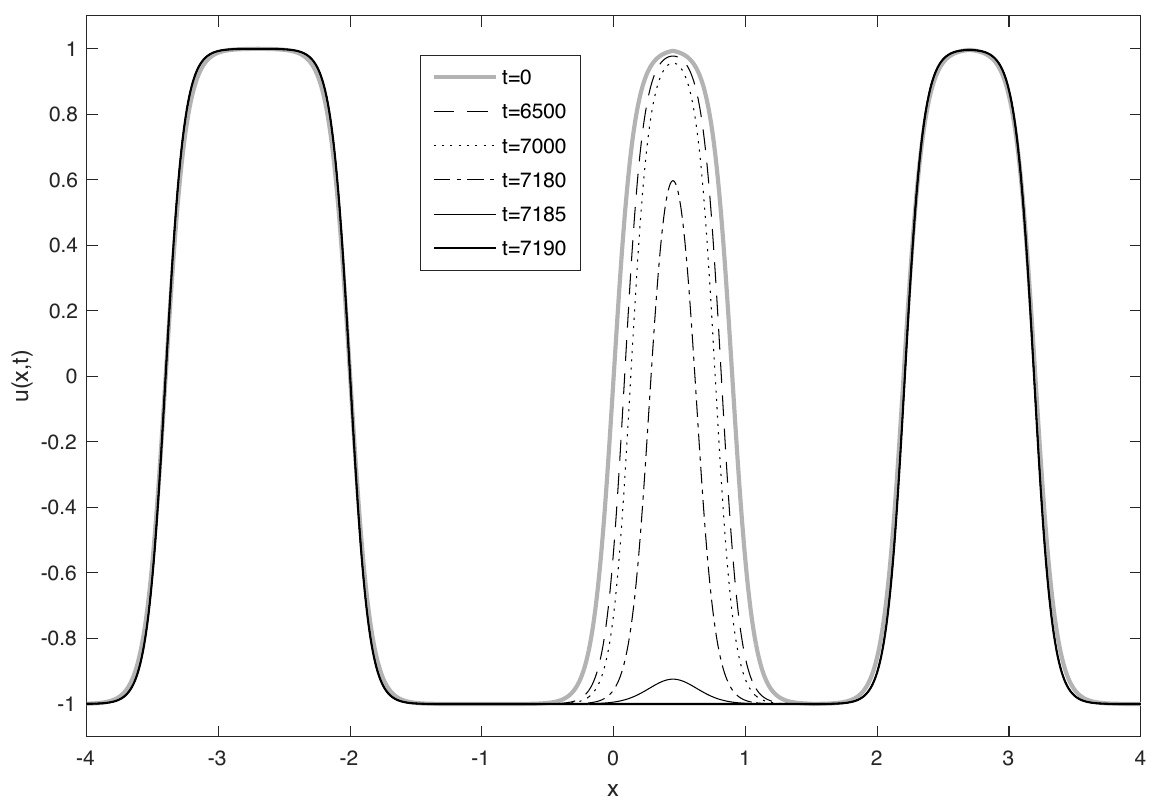}
		\,
		\includegraphics[width=7cm,height=5.7cm]{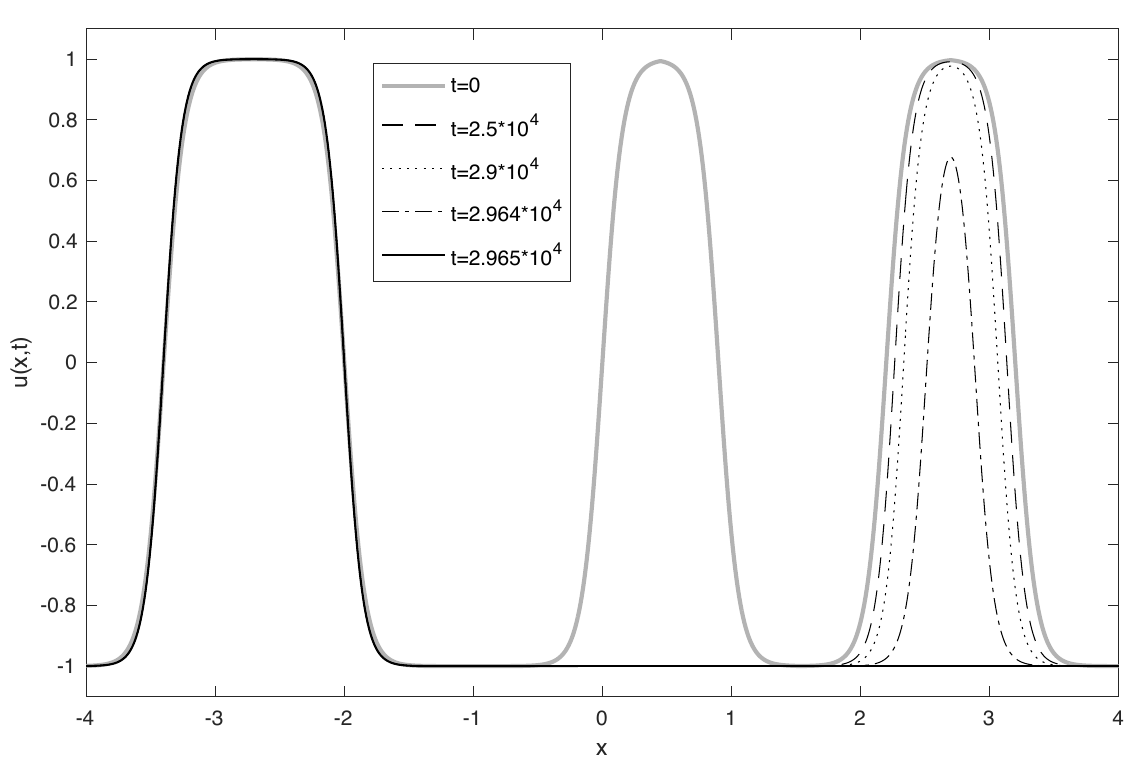}
		\hspace{3mm}
		\caption{\small{In this figure we depict the solution to \eqref{eq:Q-model}-\eqref{eq:Neu}-\eqref{eq:initial}, in the case $Q(s)=\frac{s}{1+s^2}$, $\e=0.1$ and $F(u)=\frac{1}{4}(1-u^2)^2$}. The initial datum $u_0$ has six layers located, respectively, at $-3.4$, $-2$, $0$, $0.9$, $2.2$ and $3.2$.}
		\label{Num1}
	\end{center}
\end{figure}

\subsubsection{Example no. 1} 
We start with an example illustrating the result of Theorem \ref{thm:main}; indeed, in this case $F(u)=\frac{1}{4}(1-u^2)^2$ (hence we are in the critical case $\theta=2$), and we choose $Q(s)=\frac{s}{1+s^2}$. In the left picture of Figure \ref{Num1}  we can see the solution maintains six transitions until $t=7*10^3$ and, suddenly, the closest layers collapse; after that, one has to wait up to $t=2.9*10^4$ to see another appreciable change in the solution, see the right hand picture of Figure \ref{Num1}.
It is worth mentioning that the distance between the first layers which disappear (left hand picture) is $d=0.9$, while the distance of the layers disappearing in the right hand picture is $d=1$; hence, a \emph{small} variation in the distance between the layers at the initial time $t=0$ gives rise to a \emph{big} change on the time taken for the solution to annihilate them.

\begin{figure}[h]
	\begin{center}
		\includegraphics[width=7.3cm,height=5.7cm]{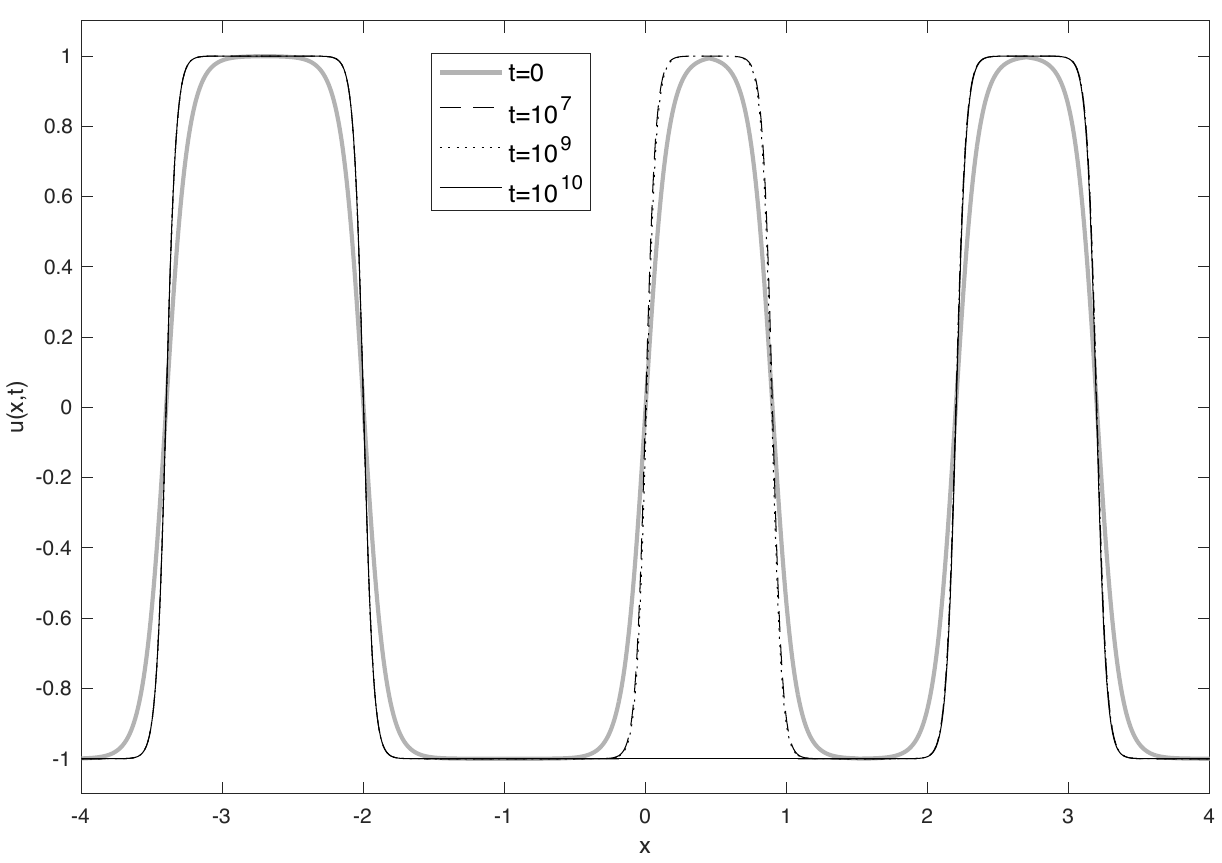}
		\,
		\includegraphics[width=7cm,height=5.7cm]{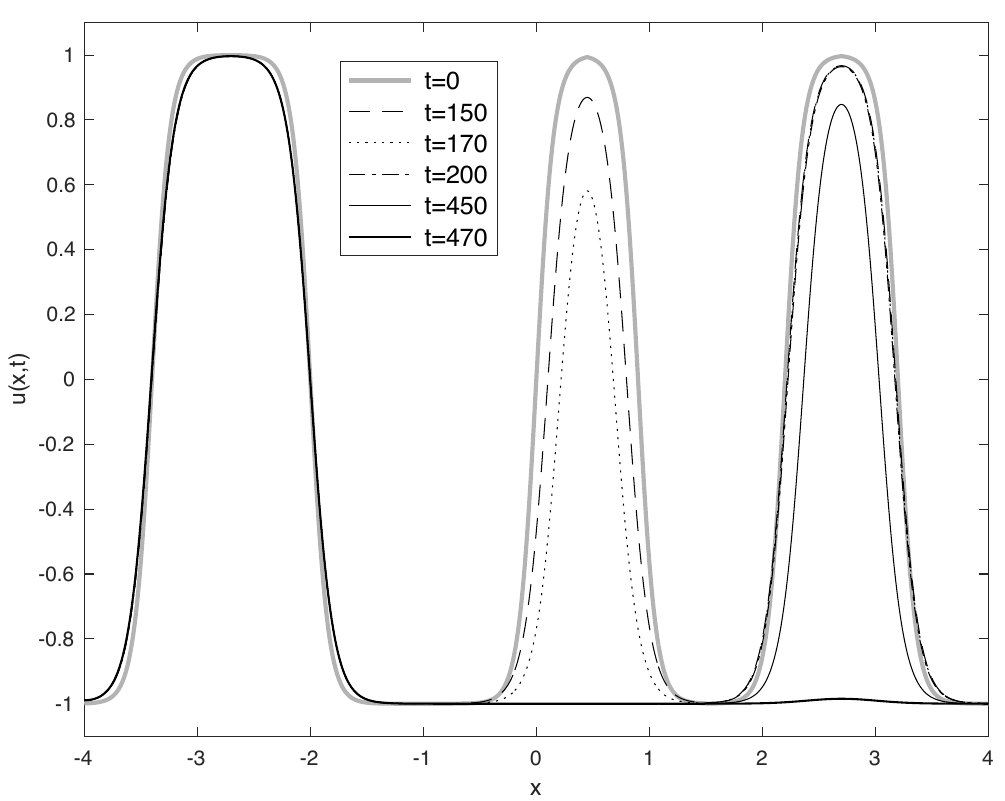}
		\hspace{3mm}
		\caption{In this figure we consider the same problem as in Figure \ref{Num1}, with the only difference $Q(s)= \frac{\alpha s}{1+s^2}$, $\alpha>0$. 
			In the left hand side $\alpha=1/4$, so that $\mathcal Q=1/4$, while on the right hand side $\alpha=\mathcal Q=2$.}
		\label{Num2}
	\end{center}
\end{figure}

\subsubsection{Example no. 2} In this second numerical experiment, we emphasize which is the role of the constant $\mathcal Q=\max \left\{  Q'(s) \, : \, s \in [-\kappa, \kappa] \right\}$ in the metastable dynamics of the solutions; in particular, and as we noticed in Remark \ref{rem:Q}, the constant $\mathcal Q$ affects the evolution of the solution, since it appears in the minimum time needed for the solution to {drift apart} from its initial transition layer structure (see Theorem \ref{thm:main}).
To be more precise, if $\mathcal Q=1$ as in the example of Figure \ref{Num1}, the first {\it bump} collapses at $t \approx 7190$, while the second at $t \approx 3*10^4$; such result has  to be compared with Figure \ref{Num2}, where all the data are the same as Figure \ref{Num1} except for the choice of $\mathcal Q$. 
In the left hand side of Figure \ref{Num2} we choose $Q$ such that  $\mathcal Q=1/4$, and we can see that the evolution becomes much slower, as the first bump collapses at $t=10^{10}$, instead of $t=7190$; on the contrary, if  $\mathcal Q=2$ (right hand picture), the dynamics accelerates and the first two bumps disappear respectively at $t\approx200$ and  $t\approx470$.

\begin{figure}[t]
	\begin{center}
		\includegraphics[width=7.5cm,height=5.7cm]{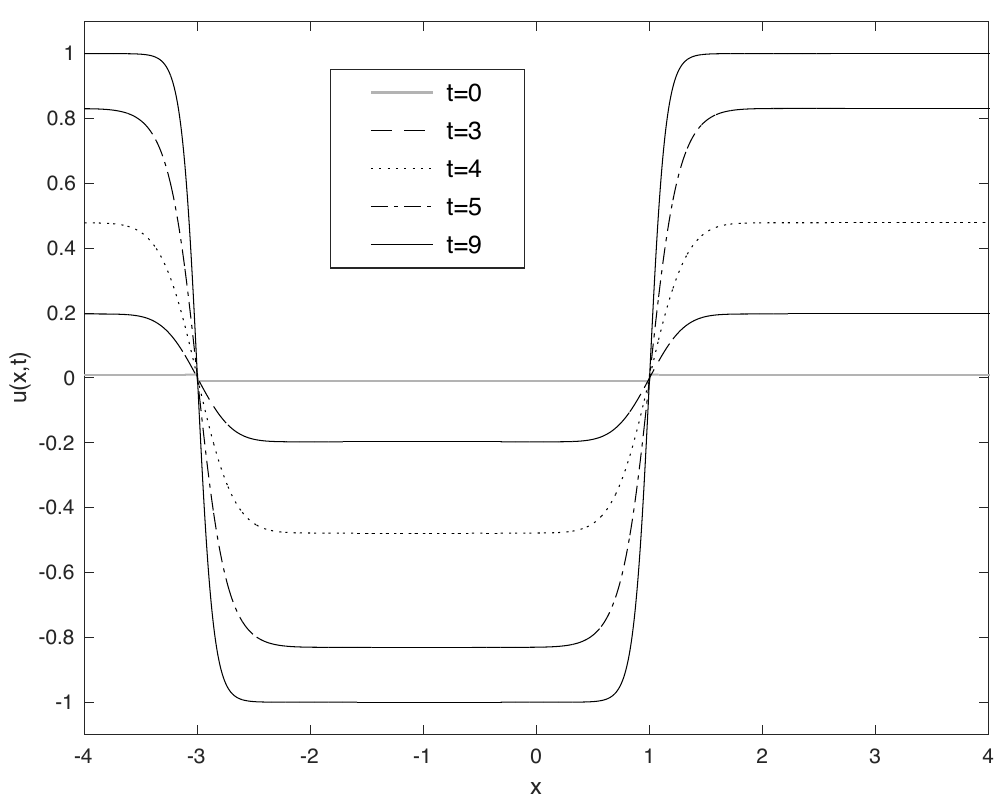}
		\,
		\includegraphics[width=7cm,height=5.7cm]{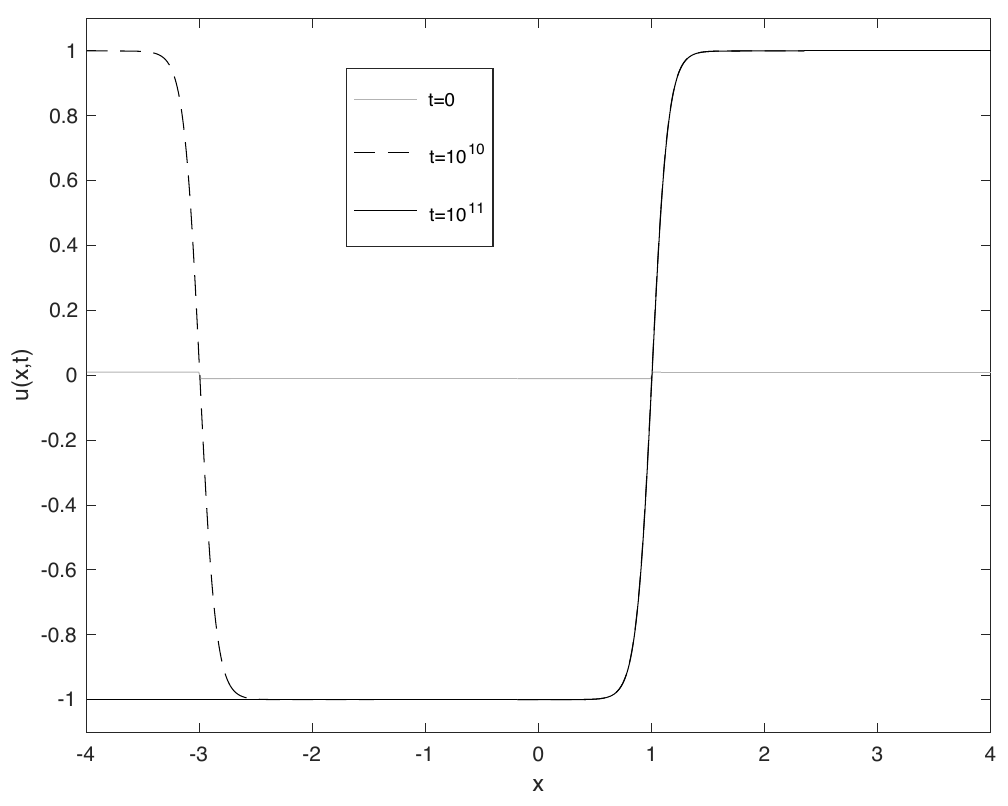}
		\hspace{3mm}
		\caption{\small{Here, we consider the same problem as in Figure \ref{Num1} with the discontinuous initial datum $u_0(x) = u_* \chi_{(-4,-3) \cup (1,4)}-u_* \chi_{(-3,1)}$, where $u_* =10^{-2}$. In the left hand side we depict the formation of a transition layer structure in relatively small times, while on the right hand side we see the subsequent slow motion.}}
		\label{Num3}
	\end{center}
\end{figure}

\subsubsection{Example no. 3} In this example we show what happens if considering a discontinuous initial datum $u_0$ which is a small perturbation of the unstable equilibrium zero. In the left hand side of Figure \ref{Num3} we can see how, in extremely short times, such configuration develops into a {\it continuous} function with two  transition layers; after that (right hand picture), we have to wait until $t \approx 10^{11}$ to see the first interface to collapse. In particular, such numerical experiment shows that, even if the solution does not satisfy  the assumptions \eqref{eq:ass-u0}-\eqref{eq:energy-ini} at $t=0$, at $t=9$ we already entered into the framework described by Theorem \ref{thm:main} and we witness the exponentially slow motion of the solution. This picture is not surprising, since it seems to confirm the well known behavior of the solution to the linear diffusion equation $u_t= \e^2 u_{xx} -F'(u)$; indeed, in \cite{Chen} the author rigorously proves that there are different phases in the dynamics, the first one being the generation of a metastable layered solution, which is governed by the ODE $u_t = -F'(u)$.
We conjecture that the same results hold true also in the nonlinear diffusion case \eqref{eq:Q-model}, being $Q(0)=0$.

\subsubsection{Example no. 4} In this last numerical experiment, we illustrate the results of Theorem \ref{thm:main2} by choosing $F$ as in \eqref{F:ex} with $\theta >2$. In both the pictures of Figure \ref{Num4} we select $Q(s)=se^{-s^2}$, and either the initial datum of Figure \ref{Num1} (left picture) or the discontinuous one of Figure \ref{Num3} (right picture), so that we can compare the two numerical experiments.
In the left hand picture, since $\theta=4$, the time taken to see  two bumps disappear is $t \approx 450$ (hence much smaller if compared to the right hand side of Figure \ref{Num1}, where one has to wait until $t \approx 10^4$). Similarly, in the right side of Figure \ref{Num4} (where $\theta=3$), the time employed by the  first interface to disappear is $t \approx 8*10^4 \ll 10^{11}$, that is the time exhibited in the right hand side of Figure \ref{Num3}.

\begin{figure}[t]
\begin{center}
\includegraphics[width=7.5cm,height=5.7cm]{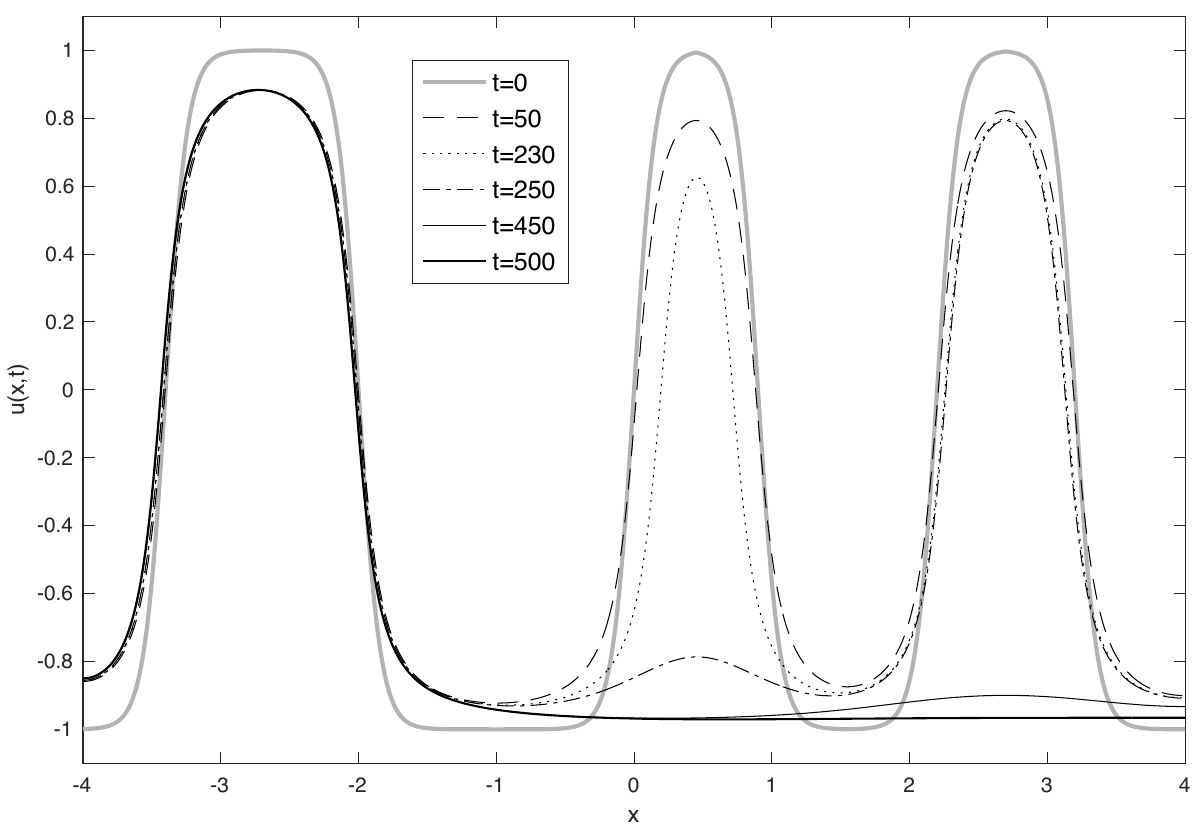}
\,
\includegraphics[width=7cm,height=5.7cm]{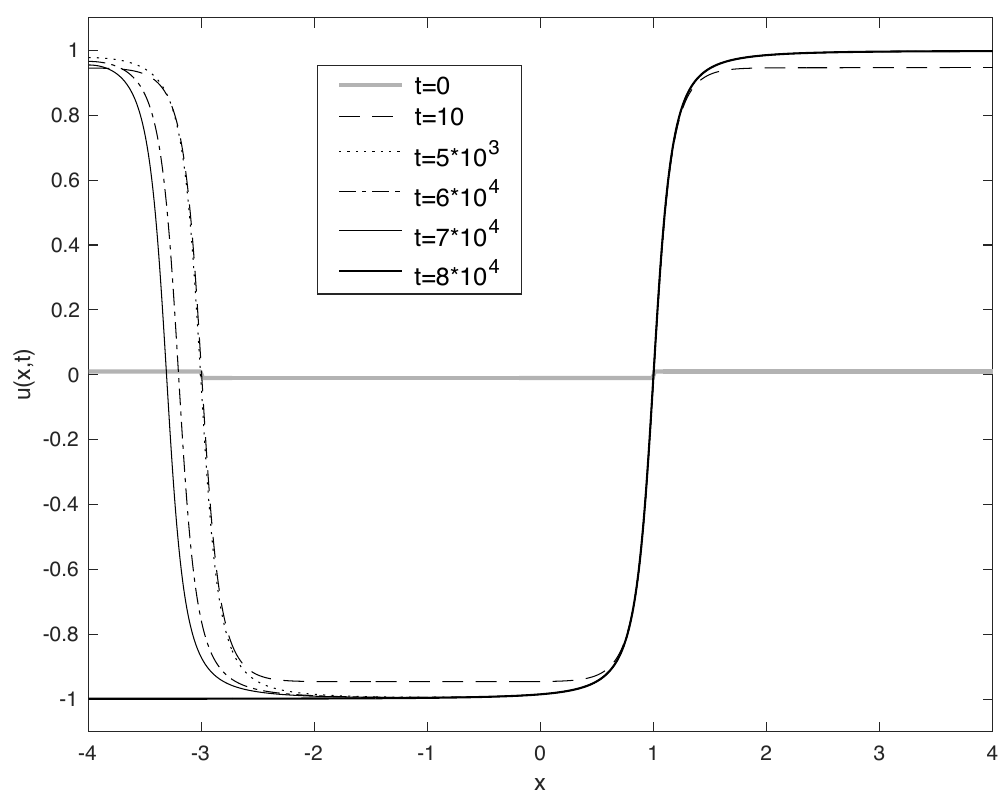}
\hspace{3mm}
\caption{\small{In this figure we depict the solution to \eqref{eq:Q-model}-\eqref{eq:Neu}-\eqref{eq:initial} with $Q(s)=s e^{-s^2}$, $\e=0.1$ and $F$ as in \eqref{F:ex} with $\theta=4$ (left picture) and $\theta=3$ (right picture). The initial datum $u_0$ is as in Figure \ref{Num1}} in the left picture, and as in Figure \ref{Num3} in the right picture.}
\label{Num4}
\end{center}
\end{figure}

\section*{Acknowledgements}
The work of R. Folino was partially supported by DGAPA-UNAM, program PAPIIT, grant IA-102423.


\begin{thebibliography}{99}

\bibitem{Allen-Cahn}
S. Allen and J. Cahn.
A microscopic theory for antiphase boundary motion and its application to antiphase domain coarsening.
{\it Acta Metall.}, {\bf 27} (1979), 1085--1095.

\bibitem{Bet-Sme}
F. Bethuel and D. Smets.
Slow motion for equal depth multiple-well gradient systems: the degenerate case.
{\it Discrete Contin. Dyn. Syst.}, {\bf33} (2013), 67--87.

\bibitem{Bron-Kohn}
L. Bronsard and R. Kohn.
On the slowness of phase boundary motion in one space dimension.
{\it Comm. Pure Appl. Math.}, {\bf 43} (1990), 983--997.

\bibitem{Carr-Pego}
J. Carr and R. L. Pego.
Metastable patterns in solutions of $u_t=\varepsilon^2u_{xx}-f(u)$.
{\it Comm. Pure Appl. Math.}, {\bf 42} (1989), 523--576.

\bibitem{Chen}
X. Chen.
Generation, propagation, and annihilation of metastable patterns.
{\it J. Differ. Equ.}, {\bf 206} (2004), 399--437.

\bibitem{CorMalSov}
A. Corli, L. Malaguti and E. Sovrano.
Wavefront solutions to reaction-convection equations with Perona-Malik diffusion.
{\it J. Differ. Equ.}, {\bf 208} (2022), 474--506.


\bibitem{Dra-Rob}
P. Dr\'abek and S. B. Robinson.
Continua of local minimizers in a non-smooth model of phase transitions.
{\it Z. Angew. Math. Phys.}, {\bf62} (2011), 609--622.

\bibitem{FPS}
R. Folino, R. G. Plaza and M. Strani. 
Metastable patterns for a reaction-diffusion model with mean curvature-type diffusion.
\emph{J. Math. Anal. Appl.}, {\bf493} (2021), article 124455.

\bibitem{FPS-DCDS}
R. Folino, R. G. Plaza and M. Strani. 
Long time dynamics of solutions to $p$-Laplacian diffusion problems with bistable reaction terms.
\emph{Discrete Contin. Dyn. Syst.}, {\bf41} (2021), 3211--3240.

\bibitem{FS}
R. Folino and M. Strani.
On reaction-diffusion models with memory and mean curvature-type diffusion.
\emph{J. Math. Anal. Appl.}, {\bf 522} (2023), article 127027.

\bibitem{FS-NA}
R. Folino and M. Strani.
On the speed rate of convergence of solutions to conservation laws with nonlinear diffusions.
{\it Nonlinear Analysis}, {\bf196} (2020), article 111762.


\bibitem{Fusco-Hale}
G. Fusco and J. Hale.
Slow-motion manifolds, dormant instability, and singular perturbations.
{\it J. Dynamics Differential Equations}, {\bf 1} (1989), 75--94.

\bibitem{Gobbi}
M. Gobbino.
Entire solutions of the one-dimensional Perona--Malik equation.
{\it Commun. Partial Differ. Equ.}, {\bf 32} (2007), 719--743.

\bibitem{Guido}
P. Guidotti.
A backward-forward regularization of the Perona--Malik equation.
{\it J. Differ. Equ.}, {\bf 252} (2012), 3226--3244.

\bibitem{Grant}
C. P. Grant.
Slow motion in one-dimensional Cahn--Morral systems.
{\it SIAM J. Math. Anal.}, {\bf 26} (1995), 21--34.

\bibitem{Hollig}
K. H\"ollig.
Existence of infinitely many solutions for a forward-backward heat equation. 
{\it Trans. Amer. Math. Soc.}, {\bf 278} (1983), 299--316.

\bibitem{Kaw-Kut}
B. Kawohl and N. Kutev.
Maximum and comparison principle for one-dimensional anisotropic diffusion.
{\it Math. Ann.}, {\bf 311}(1998), 107--123.

\bibitem{Kich}
S. Kichenassamy. 
The Perona--Malik paradox.
{\it SIAM J. Appl. Math.}, {\bf 57} (1997), 1328--1342.

\bibitem{Morfu}
S. Morfu.
On some applications of diffusion processes for image processing.
{\it Physics Letters A}, {\bf 373} (2009), 2438--2444.

\bibitem{PerMal}
P. Perona and J. Malik.
Scale-space and edge detection using anisotropic diffusion.
{\it IEEE Trans. Pattern Anal. and Machine Intell.}, {\bf12} (1990), 629--639.

\end{thebibliography}
\end{document}